\newcommand{\qq}[1]{[\![{#1}]\!]}
\def\@settitle{\begin{center}%
    \bfseries
 \normalfont\LARGE\@title
  \end{center}%
}
\def\@setauthors{\begin{center}%
 \normalsize\@author
  \end{center}%
}
\numberwithin{equation}{section}
\newcommand{\const}{\mbox{const}}
\newcommand{\rd}{{\rm d}}
\newcommand{\bR}{{\mathbb R}}
\newcommand{\bM}{{\mathbb M}}
\def\cA{{\mathcal A}}
\def\cB{{\mathcal B}}
\def\cO{{\mathcal O}}
\def\cH{{\mathcal H}}
\def\cA{{\mathcal A}}
\def\cO{{\mathcal O}}
\newcommand{\fa}{{\frak a}}
\newcommand{\fb}{{\frak b}}
\newcommand{\fc}{{\frak c}}
\newcommand{\fC}{{\frak C}}
\newcommand{\cU}{{\mathcal U}}
\newcommand{\bmx}{{\bm{x}}}
\newcommand{\bmy}{{\bm{y}}}
\newcommand{\al}{\alpha}
\newcommand{\be}{\begin{equation}}
\newcommand{\ee}{\end{equation}}
\newcommand{\la}{\lambda}
\newcommand{\cC}{{\cal C}}
\newcommand{\cF}{{\cal F}}
\newcommand{\bC}{{\mathbb C}}
\newcommand{\bE}{{\mathbb E}}
\newcommand{\bP}{{\mathbb P}}
\renewcommand{\epsilon}{\varepsilon}
\renewcommand{\leq}{\leqslant}
\renewcommand{\geq}{\geqslant}
\renewcommand{\le}{\leq}
\renewcommand{\ge}{\geq}
\newcommand{\E}{\mathbb{E}}
\newcommand{\N}{\mathbb{N}}
\def\bR{{\mathbb R}}
\def\bB{{\mathbb B}}
\def\bC{{\mathbb C}}
\def\bP{{\mathbb P}}
\def\bE{{\mathbb E}}
\def\bP{{\mathbb P}}
\def\bQ{{\mathbb Q}}
\def\sfa{{\mathsf a}}
\def\sfb{{\mathsf b}}
\def\sfg{{\mathsf g}}
\def\sfh{{\mathsf h}}
\def\sfp{{\mathsf p}}
\newcommand{\cM}{{\cal M}}
\DeclareMathOperator{\diag}{diag}
\DeclareMathOperator{\tr}{Tr}
\DeclareMathOperator{\supp}{supp}
\DeclareMathOperator{\OO}{O}
\DeclareMathOperator{\oo}{o}
\newcommand{\del}{\partial}
\newcommand{\Tr}{\operatorname{Tr}}
\newcommand{\bma}{\bm{a}}
\newcommand{\bmb}{\bm{b}}
\renewcommand{\d}{\mathrm{d}}
\renewcommand{\Im}{{\mathrm{Im}}}
\renewcommand{\Re}{\mathrm{Re}}
\theoremstyle{plain} 
\newtheorem{theorem}{Theorem}[section]
\newtheorem*{theorem*}{Theorem}
\newtheorem{lemma}[theorem]{Lemma}
\newtheorem*{lemma*}{Lemma}
\newtheorem{corollary}[theorem]{Corollary}
\newtheorem*{corollary*}{Corollary}
\newtheorem{proposition}[theorem]{Proposition}
\newtheorem*{proposition*}{Proposition}
\newtheorem{assumption}{Assumption}
\newtheorem*{assumption*}{Assumption}
\newtheorem*{definition*}{Definition}
\newtheorem*{example*}{Example}
\newtheorem{remark}[theorem]{Remark}
\newtheorem*{remark*}{Remark}
\newtheorem*{remarks*}{Remarks}
\title{Large Deviations Asymptotics of Rectangular Spherical Integral}
  \author[1]{Alice Guionnet\thanks{aguionne@ens-lyon.fr}}
  \author[2]{Jiaoyang Huang\thanks{jh4427@nyu.edu}}
  \affil[1]{CNRS-ENS Lyon}
\affil[2]{New York University}
\date{}
\begin{document}

\maketitle

\abstract{
In this article we study the Dyson Bessel process, which describes  the evolution of singular values of rectangular matrix Brownian motions, and prove a large deviation principle for its empirical particle density. We then use it to obtain the asymptotics of the so-called rectangular spherical  integrals as $m,n$ go to infinity while $m/n$ converges.
  }

{
  \hypersetup{linkcolor=black}
  \tableofcontents
}

\newpage

\section{Introduction} In this article we shall study the asymptotics of the so-called rectangular spherical   integrals, also called Berezin-Karpelevich type  integrals in the literature.  This type of  integrals  arises when one studies rectangular matrices and is the natural counterpart of the  well known Harish-Chandra -Itzykson-Zuber (HCIZ) integral. 
The interest in  spherical integrals comes from different fields. Harish-Chandra was motivated by Fourier analysis in semi-simple Lie algebras. They appear in physics as the density in matrix models such as the Ising model  \cite{Eynard,Meh,CMM} or more generally matrix models with an external field \cite{BrHi}, including the famous Kontsevich matrix model \cite{kont}. Their uses in random matrix theory appeared more recently. First it was shown that spherical integral with  a rank one external field gives asymptotically the famous $R$-transform defined by Voiculescu in free probability \cite{GuMa05} as an analogue of Fourier transform. This approach was generalized to the rectangular-free convolution by using rectangular spherical integrals \cite{BG11} or to the multiplicative free convolution and   the $S$-transform \cite{BoPo,MePo}.

Knowing the  asymptotics of rank one spherical integrals allowed as well to investigate the large deviations for the extreme eigenvalues of random matrices. This approach was introduced in \cite{GuHu} where it was shown that the  probability that the largest eigenvalue of a Wigner matrix takes an unexpected value is the same when the entries are Rademacher or Gaussian. This universality phenomenon was shown to hold for random matrices with i.i.d. entries whose Laplace transform is bounded by the Laplace transform of a Gaussian variable with the same covariance. For more general sub-Gaussian entries, a transition appears in the rate function between large deviations towards a very large value with a heavy tail type rate function, and deviations close to the bulk which are governed by the Gaussian rate function. Such considerations were extended to unitary invariant ensembles \cite{GuMa20}, to the joint distribution of the largest eigenvalue and its eigenvector \cite{BiGu}, to sum of matrices, to  finitely many extreme eigenvalues \cite{GHmulti}. Indeed, the asymptotics of spherical integral could be extended to finite rank external fields \cite{GHmulti}. For small enough matrices, the same asymptotics were shown to extend to the case where the rank goes to infinity  more slowly than the dimension \cite{CoSn} and to full rank matrices \cite{CGM}.
  However, the limit differs when the rank of both matrices are of the same order and the matrices do not have small norms. Such a limit can as well be used to prove large deviation principles for the empirical measure of the eigenvalues of random matrices \cite{BGHldp} and more generally study the asymptotics of matrix models with an external field\cite{BrHi, MR2034487}.
  
 The formula for the asymptotics of  HCIZ integrals  was  foreseen  by Matytsin \cite{Matrig} and then proven rigorously in \cite{GZ3,GZei1add,MR2034487}. Matytsin used the description of Spherical integrals as invariant eigenfunctions of the Laplacian. The approach of \cite{GZ3} is kind of dual and based on a representation of spherical integrals as the density of a Dyson Brownian motion conditioned at time one, a representation which allows to use large deviations techniques and martingales. In this paper, we follow the same route for the rectangular case but prove a more general large deviation principle for conditioned Dyson Brownian motions. 
  In fact, the result in \cite{GZ3} relies on the matrix model, and only concerns the case $\beta=1$ or $2$ whereas we can deal in this paper with all cases $\beta\geq 1$. The extension of \cite{GZ3}  to the rectangular case is a natural step, which however posed significant difficulties for the proof of the lower bound if one uses the methods of \cite{GZ3}, due to additional singularity of the drifts. We should also mention the heuristics proposed in this setting in \cite{FoGr} following Matystin's arguments. One
   key idea of this paper is  to improve the large deviations lower bound by obtaining better criteria for the uniqueness of solutions to McKean-Vlasov equations with smooth fields inspired from 
\cite{LLX}, rather than the weaker approach developed in \cite{cabanal2003discussions}. Another novelty in this paper is a quantitative estimate for the convergence to Dyson Brownian motion with very general potential by a coupling argument, see Proposition \ref{p:lowerbound2}. Under more restricted assumptions, i.e. the limiting profile has square root behavior around the edge, such quantitative estimates for the convergence has been obtained in \cite{huang2019rigidity,adhikari2020dyson,huang2020edge} by using the characteristic method.
The quantitative estimate for the convergence allows us to efficiently control the locations of each particles and extend our result to Dyson Bessel processes which arises when one considers rectangular matrices and hence derive the limits of rectangular spherical integrals. We now state more precisely our main results.

The  rectangular spherical   integral is given by 
\begin{align}\label{e:UAVB} I_{n,m}(A_{n},B_{n})=
\iint e^{\beta n\Re[\Tr(A_n^*UB_nV^*)]}\rd U\rd V,
\end{align}
where  if $\beta=1$, $U\in \cO(n),V\in \cO(m)$ follow the Haar distribution over the orthogonal group, and $A_n\in \bR^{n\times m}, B_n\in \bR^{n\times m}$, whereas for $\beta=2$,  $U\in \cU(n),V\in \cU(m)$ follow the  Haar distribution over  the unitary group, $A_n\in \bC^{n\times m}, B_n\in \bC^{n\times m}$ for $\beta=2$. We call such integrals rectangular spherical integrals and shall study their asymptotic behavior when $m$ and $n$ go to infinity so that 
 the ratio $m/n$ converges towards some 
 $ 1+\alpha\in[1,\infty)$. This type of spherical integral  arises when one studies rectangular matrices and is the natural counterpart of the  well known Harish-Chandra -Itzykson-Zuber (HCIZ) integral defined  when $\beta=2$ and for two self-adjoint matrices $A_{n},B_{n}\in \bC^{n\times n}$  by
 $$I_{n}(A_{n},B_{n})=\int e^{n\Tr(A_{n}UB_{n}U^{*})}\rd U\,,$$
 where $U$ follows the Haar distribution over the unitary group.
 This integral was shown by Harish-Chandra \cite{HC57} and then Itzykson and Zuber \cite{Itzyksonzuber}
 to be equal to a determinant:
 \begin{equation}\label{tg}I_{n}(A_{n},B_{n})=c_{n}\frac{\det\left[e^{na_{i}b_{j}}\right]_{1\leq i,j\leq n}}{\Delta(\bma)\Delta(\bmb)},\end{equation}
 where $\bma=(a_1, a_2,\cdots, a_n), \bmb=(b_1, b_2,\cdots, b_n)$ are eigenvalues of $A_n$ and $B_n$ respectively, and $\Delta(\bma)=\prod_{i<j}(a_{i}-a_{j}), \Delta(\bmb)=\prod_{i<j}(b_{i}-b_{j})$ are Vandermonde determinants. 
 In 2003, Schlittgen and Wettig 
\cite{schlittgen2003generalizations} considered a generalization of the above rectangular spherical integral  given by 
\begin{align}\label{e:inegral0}
\iint \det[UV]^\nu e^{\tau\Tr(A_n^*UB_nV^*+VD_n^*U^*C_n)/2}\rd U\rd V,
\end{align}
where $U,V\in \cU(n)$ are $n\times n$ unitary matrices following Haar distribution, $A_n,B_n,C_n,D_n$ are deterministic $n\times n$ matrices, and $\nu$ is a non-negative integer. They showed that the generalization of the above integral to the case of unequal dimensions of $U,V$ leads to an integral which can be nonzero only if $\nu=0$, and predicted the following formula: for $m\geq n$
\begin{align}\label{e:integral1}
\iint e^{\tau \Tr(A_n^*UB_nV^*+VD_n^*U^*C_n)/2}\rd U\rd V
=\frac{\tau^{n(m-1)}\prod_{i=1}^n(m-i)!(n-i)!}{\Delta(\bmx^2)\Delta(\bmy^2)\prod_{i=1}^n (x_iy_i)^{m-n}}\det[I_{m-n}(2\tau x_iy_j)]_{1\leq i,j\leq n},
\end{align}
where $U\in \cU(n)$ is an $n\times n$ unitary matrix, $V\in \cU(m)$ is an $m\times m$ unitary matrix, both follow the Haar distribution, $ B_n, C_n$ are deterministic $n\times m$ matrices, and 
$A_n,D_n$ are deterministic $m\times n$ rectangular matrices,  $I_{m-n}(x)$ is the Bessel function
$$I_\kappa(2y)=y^{\kappa}\sum_{k=0}^\infty \frac{y^{2k}}{k!(k+\kappa)!}\,,$$
and $\bmx^2=(x_1^2, x_2^2,\cdots, x_n^2)$, $\bmy^2=(y_1^2, y_2^2,\cdots, y_n^2)$ are eigenvalues of the matrices $A_nC_n^*$, $B_nD_n^*$. This formula was proven in 
\cite{ghaderipoor2008generalization}.
We get the rectangular spherical integral \eqref{e:UAVB} from \eqref{e:integral1} by taking $A_n=C_n$ and $B_n=D_n$. Such formulas can be obtained by using
 the character expansion method.  Another approach is based on heat flows \cite{McS,BrHi}. Indeed, one can notice that Fourier functions $X\rightarrow e^{i\tr(AX)}$ 
 are the eigenfunctions of the Laplacian for any matrix $A$. Looking for eigenfunctions depending only on the eigenvalues of $X$ one gets the spherical integral $I_{n}(A_{n},X_{n})$, which in turns has to be an eigenfunction of the Laplace operator restricted to functions invariant under conjugation, namely the Dyson Laplace operator $L=-\Delta(X)^{-1}\sum_{i }\delta_{x_{i}}^{2}\Delta(X)$. Note however that \eqref{tg} and \eqref{e:integral1}  are not useful to derive asymptotics as they are given in terms of  a signed sum of diverging terms.

For a rectangular $n\times m$  matrix $A_{n}$ , $m\geq n$, with non trivial singular values  $(s_{i})_{{1\le i\le n}}$, we denote $\hat\nu_{A}^{n}$ its symmetrized empirical singular values 
$$\hat\nu^{n}_{A}=\frac{1}{2n}\sum_{i=1}^{n }\left(\delta_{s_{i}}+\delta_{-s_{i}}\right)\,.$$
We denote by $\Sigma$ the non commutative entropy
$$\Sigma(\nu)=\int \log |x-y|d\nu(x) d\nu(y)\,.$$
Then, we prove the following asymptotics for the rectangular spherical integrals:

\begin{theorem} 
\label{main1}
Let $A_n, B_n\in \bR^{n\times m}$ and  $U\in \cO(n),V\in \cO(m)$ following Haar distribution over orthogonal group for $\beta=1$;  $A_n, B_n\in \bC^{n\times m}$ and $U\in \cU(n),V\in \cU(m)$ following Haar distribution over unitary group, for $\beta=2$,  where $m\geq n$ and {{$m/n\rightarrow 1+\alpha, \alpha \ge 0$}}. We assume that  the symmetrized empirical singular values  $\hat\nu_A^n$ and $\hat \nu_B^n$ of $A_n$ and $B_n$ converge weakly  to $\hat\nu_A$ and $\hat\nu_B$ respectively. We moreover assume that for $C=A$ or $B$, we have  $\sup_{n}\hat\nu_{C}^{n}(x^{2})<\infty$, $\Sigma(\hat\nu_{C})>-\infty$ and, if $\alpha\neq 0$,
$
\int \ln |x|d\hat \nu_{C}>-\infty$. 
Then, the  following limit of the rectangular spherical integral exists
$$
\lim_{n}\frac{1}{n^{2}}\log I_{n,m}(A_n,B_n)=\frac{\beta}{2} I^{\alpha}(\hat \nu_{A},\hat\mu_{B}),\quad I_{n,m}(A_n,B_n)=\int e^{\beta  n\Re[\Tr(A_n^*UB_nV^*)]}\rd U\rd V.
$$
It is given explicitly by
\begin{align}\begin{split}\label{e:arate}
&I^{\alpha}(\hat\nu_{A},\hat\nu_{B})=-\inf_{\{\hat\rho_t\}_{0\leq t\leq1} }\left\{
\int_0^1 \int u_s^2\hat \rho_s \rd x\rd s+\frac{\pi^2}{3}\int_0^1\int  \hat \rho^3_s \rd x\rd s
+\frac{\alpha^2}{4}\int \frac{\hat \rho_s(x)}{x^2}\rd x\rd s\right\}\\
&+(\hat\nu_{A}(x^{2}-\al\log |x|)+\hat\nu_{B}(x^{2}-\al\log |x|))-(\Sigma(\hat\nu_A)+\Sigma(\hat\nu_B)) +\const,
\end{split}\end{align}
where $\const$ is a constant depending on {{$\al$}}.
The infimum is taken over  continuous symmetric measure valued processes  $(\hat\rho_{t}(x)\rd x)_{{0<t<1}}$ such that
\begin{align}\label{e:bbterm}
\lim_{t\rightarrow 0}\hat\rho_t(x)\rd x=\hat\nu_A,\quad
\lim_{t\rightarrow 1}\hat\rho_t(x)\rd x=\hat\nu_B.
\end{align}
Moreover, 
$u$ is  the weak solution of the following conservation of mass equation
\begin{align*}
\del_s\hat \rho_s+\del_x(\hat \rho_s u_s)=0.
\end{align*}
\end{theorem}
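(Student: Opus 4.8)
The plan is to realise $I_{n,m}(A_n,B_n)$ as the transition density of the Dyson Bessel process — the symmetrised singular value process of a rectangular matrix Brownian motion — and then feed this into the large deviation principle for that process established in the earlier sections, combined with Varadhan's lemma.

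\textbf{Step 1 (matrix model representation).} For $\beta=1$ (resp. $\beta=2$), let $H_t$ be an $n\times m$ real (resp. complex) matrix Brownian motion with $H_0=A_n$, normalised so that its singular values, symmetrised into a configuration of $2n$ particles, evolve as a Dyson Bessel process with parameter $\alpha_n=m/n-1$; the $\beta$-dependence is carried entirely by the variance of the increments, which is why the limit carries an overall factor $\beta/2$. Using rotational invariance of the Brownian increments together with the Jacobian of the singular value decomposition — equivalently, reading off the Berezin-Karpelevich/Harish-Chandra type formula \eqref{e:integral1} against the Bessel-determinantal form of the transition kernel — the density $q_1^{(n)}$ of the symmetrised singular values of $H_1$ at those of $B_n$ factorises through the spherical integral:
$$ q_1^{(n)}\big(\mathrm{sv}(A_n)\to\mathrm{sv}(B_n)\big)\;=\;(\text{Gaussian normalisation})\cdot\frac{\Delta(\mathrm{sv}(B_n)^2)}{\Delta(\mathrm{sv}(A_n)^2)}\cdot\Big(\prod_i(\cdots)\Big)\cdot e^{-\frac{\beta n}{2}\left(\|A_n\|^2+\|B_n\|^2\right)}\,I_{n,m}(A_n,B_n). $$
Hence $\tfrac1{n^2}\log I_{n,m}$ equals $\tfrac1{n^2}\log q_1^{(n)}$ plus explicit corrections whose limits produce the terms $\hat\nu_A(x^2)+\hat\nu_B(x^2)$, $-\Sigma(\hat\nu_A)-\Sigma(\hat\nu_B)$ (from the Vandermonde ratio), the $-\alpha\big(\hat\nu_A(\log|x|)+\hat\nu_B(\log|x|)\big)$ correction (from the Bessel weight $\prod(x_iy_i)^{m-n}$, present only when $\alpha\neq0$, which is exactly where the hypothesis $\int\ln|x|\,\rd\hat\nu_C>-\infty$ enters), and the $\alpha$-dependent additive constant (from Stirling's formula applied to the product of factorials in the normalisation).

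\textbf{Step 2 (from transition density to a bridge variational problem).} View $q_1^{(n)}(a\to b)$ as the value at $b$ of the law of the Dyson Bessel process started at $a$. Relating this density to probabilities of shrinking neighbourhoods of $\mathrm{sv}(B_n)$ and invoking the large deviation principle for the empirical particle density $\hat\rho^{(n)}_t=\tfrac1{2n}\sum_i(\delta_{s_i(t)}+\delta_{-s_i(t)})$ — whose rate function, for paths starting at $\hat\nu_A$, is
$$\mathcal S(\hat\rho)\;=\;\int_0^1\!\!\int u_s^2\,\hat\rho_s\,\rd x\,\rd s\;+\;\frac{\pi^2}{3}\int_0^1\!\!\int\hat\rho_s^3\,\rd x\,\rd s\;+\;\frac{\alpha^2}{4}\int_0^1\!\!\int\frac{\hat\rho_s(x)}{x^2}\,\rd x\,\rd s,$$
with $u$ determined by $\partial_s\hat\rho_s+\partial_x(\hat\rho_s u_s)=0$ — one obtains $\tfrac1{n^2}\log q_1^{(n)}\to-\inf\{\mathcal S(\hat\rho)\}$, the infimum over continuous symmetric measure valued paths satisfying \eqref{e:bbterm}. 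Here the $\frac{\pi^2}{3}\hat\rho^3$ term is the usual entropic cost of the logarithmic repulsion among the $2n$ symmetrised particles and the $\frac{\alpha^2}{4}\hat\rho_s/x^2$ term is the extra cost produced by the Bessel drift $\sim(m-n)/(2s)$. Combining with Step 1 yields \eqref{e:arate}.

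\textbf{Step 3 (upper/lower bounds; the main obstacle).} The upper bound follows from the LDP upper bound for $\hat\rho^{(n)}$, exponential tightness, and lower semicontinuity of the terminal cost. The hard direction is the lower bound: given any path $(\hat\rho_t)$ satisfying \eqref{e:bbterm} with $\mathcal S(\hat\rho)<\infty$, one must exhibit particle configurations of the Dyson Bessel process shadowing it with probability $e^{-n^2(\mathcal S(\hat\rho)+o(1))}$, and this is where the additional singularity of the Bessel drift at the origin bites. I would (i) regularise $\hat\rho_t$ in space and time and push it slightly off $\{x=0\}$, checking via the $\alpha^2\hat\rho/x^2$ term that the cost changes negligibly and that \eqref{e:bbterm} is restored by short interpolations of vanishing cost; (ii) identify the regularised path as the unique solution of a McKean-Vlasov equation with a smooth effective field, using the improved uniqueness criterion adapted from \cite{LLX} rather than the weaker one of \cite{cabanal2003discussions}; and (iii) couple the finite-$n$ Dyson Bessel process, run under the Girsanov-tilted drift corresponding to this field, to the deterministic flow, using the quantitative convergence estimate of Proposition \ref{p:lowerbound2} to control the location of every particle and thereby obtain the matching lower bound. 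Keeping the regularisation compatible with the $1/x^2$ penalty near the origin, and making the coupling error subexponential uniformly up to the boundary times $t=0,1$, is the principal technical difficulty — exactly the point at which a naive adaptation of \cite{GZ3} to the rectangular setting fails.
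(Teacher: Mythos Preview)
Your strategy — express $I_{n,m}$ through the singular-value density of $X_n=A_n+G_n/\sqrt n$, invoke the LDP for the Dyson Bessel process, and contract to the terminal time — is exactly the paper's. Two things are off.

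First, the bookkeeping in Steps 1–2 contains compensating errors. The density of the singular values of $H_1$ given $H_0=A_n$ (equation \eqref{e:lawXX0}) carries only a $B$-side Vandermonde $\prod_{i<j}|b_i^2-b_j^2|^\beta$ and a $B$-side Bessel weight $\prod_i b_i^{\beta(m-n+1)-1}$; there is no $\Delta(\mathrm{sv}(A_n)^2)$ in the denominator and no $\prod_i a_i^{m-n}$ factor. Correspondingly, the actual LDP rate function for the Dyson Bessel process is not your $\mathcal S$: by Proposition \ref{p:rate2} (equation \eqref{Sent2}) it equals
\[
\frac{\beta}{2}\Big[\mathcal S(\hat\rho)\;-\;\Big(\Sigma(\hat\nu_t)+\alpha\!\int\!\log|x|\,\rd\hat\nu_t\Big)\Big|_{t=0}^{1}\Big].
\]
The missing $A$-side boundary entropy in your Step 2 is exactly cancelled by the spurious $A$-side Vandermonde and Bessel factors you inserted in Step 1, so the final formula \eqref{e:arate} comes out right, but neither intermediate claim is correct as written. (The determinantal formula \eqref{e:integral1}, which \emph{does} carry both Vandermonde factors, is an expression for $I_{n,m}$ itself when $\beta=2$, not for the transition density $q_1^{(n)}$; the paper explicitly notes that this formula is useless for asymptotics and never uses it.)

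Second, your Step 3 is the proof of Theorem \ref{main2}, not of Theorem \ref{main1}. In the paper, Theorem \ref{main1} is a short corollary of Theorem \ref{main2} together with the explicit form \eqref{Sent2} of the rate function; the regularisation near the origin, the McKean–Vlasov uniqueness input from \cite{LLX}, and the quantitative coupling of Proposition \ref{p:lowerbound2} all sit inside the proof of the LDP in Section \ref{secbessel} and are taken as a black box when deducing Theorem \ref{main1} in Section \ref{sph-sec}.
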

This theorem will be proved in Section \ref{sph-sec}. We show in Proposition \ref{p:uniquelimit} that in fact the non commutative law of $(A_n,UB_nV^{*})$ converges when$(U,V)$ follows  the Gibbs measure with free energy $ I_{n,m}(A_n,B_n)$. 
As in \cite{GZ3}, the main point is to derive a large deviation principle for the associated processes, namely Bessel Dyson processes. Indeed, let $G_{n}$ be  an $n\times m$ rectangular matrix with independent real ($\beta=1$) or complex ($\beta=2$) Gaussian entries and set

\begin{align*}
X_n=A_n+\frac{1}{\sqrt n}G_n.
\end{align*} then, we claim that the large deviation principle for the symmetrized empirical singular values 
of $X_{n}$ gives the asymptotics of spherical integrals. In fact,
denote the singular value decomposition of $X_n$ as $X_n=UB_nV^*$. Then the joint  law of $(B_n,U,V)$ is given by 
\begin{align}\begin{split}\label{e:lawXX0}
\frac{1}{Z_{n,m}}\prod_i b_i^{\beta(m-n+1)-1}
\prod_{i<j}|b^2_i-b^2_j|^\beta e^{-\frac{\beta n}{2}(\sum_i b^2_i+\sum a^2_i)+\beta n\Re[\Tr(A_n^*UB_nV^*)]}\rd U\rd V\rd B_n.
\end{split}\end{align}
Assume that we have proven a   large deviation principle  for $\hat\nu^{n}_{X}$ with a good rate function $I_{\hat\nu_{A}}$ so that for any symmetric probability measure $\hat\nu_{B}$

\begin{align}\label{conv10}
\lim_{n\rightarrow \infty}\frac{1}{n^2}\log \bP(\hat\nu^n_{B} \in \bB(\hat\nu_B,\delta))
= -I_{\hat\nu_{A}}(\hat\nu_{B})+o_{\delta(1)}
\end{align}
{{where $\oo_\delta(1)$ goes to zero as $\delta$ goes to zero.}}
By integrating \eqref{e:lawXX0} over the ball $\bB(\hat\nu_B,\delta)$, we have
\begin{align*}
&\phantom{{}={}}\int_{\hat\nu^n_{B}\in \bB(\hat\nu_B,\delta)}\frac{1}{Z_{n,m}}\prod_i b_i^{\beta(m-n+1)-1}
\prod_{i<j}|b^2_i-b^2_j|^\beta e^{-\frac{\beta n}{2}(\sum_i b^2_i+\sum a^2_i)+\beta n\Re[\Tr(A^*UBV^*)]}\rd U\rd V\rd B_n\\
&=\frac{1}{Z_{n,m}} 
e^{\frac{\beta n^2}{2} (2  \al \int \log |x|\rd \hat\nu_B+2\Sigma(\hat\nu_B)-(\hat\nu_A( x^2) +\hat\nu_B(x^2))+\oo_\delta(1))}
\int_{\hat\nu^n_{B}\in \bB(\hat\nu_B,\delta)} \int e^{\beta n\Re[\Tr(A^*UBV^*)]}\rd U\rd V\rd B_n.
\end{align*}

By rearranging, we obtain the following asymptotics of the spherical integral (following the standard arguments to prove large deviations for Beta-ensembles \cite{BAG}):
\begin{align*}\begin{split}
&\lim_{n\rightarrow \infty}\frac{1}{n^{2}}\log I_{ n,m}(A_n,B_n)
=-I_{\hat\nu_{A}}(\hat\nu_{B})\\
& -\frac{\beta}{2}\left(2\al\int \log x \rd\hat\nu_{B}(x)+2\Sigma(\hat\nu_B)-(\hat\nu_{A}(x^{2})+\hat\nu_{B}(x^{2})) \right)+\const.
\end{split}\end{align*}
To prove \eqref{conv10}, we see $X_{n}=H(1)$ as the matrix valued process $H(t)=A+G_{n}(t)/\sqrt n$ at time one, where $G_{n}(t)$ is field with independent Brownian motions. The singular values $s_1(t)\geq s_2(t)\geq \cdots \geq s_{n-1}(t)\geq |s_n(t)|$  of $H(t)$ follow the Dyson Bessel  process:
\begin{align}\begin{split}\label{e:DBPcopy}
\d s_i(t) 
&=\frac{\d W_{i}}{\sqrt{\beta n}}+\left(\frac{1}{2n}\sum_{j: j \neq i}\frac{1}{s_i(t)-s_j(t)}+\frac{1}{2n}\sum_{j: j\neq i}\frac{1}{s_i(t)+s_j(t)}+\frac{\al_n}{2 s_i(t)}\right)\d t,\quad 1\leq i\leq n,
\end{split}\end{align}
where $W_1, W_2,\cdots, W_n$ are independent Brownian motions and 
\begin{align*}
\alpha_{n}=\frac{m-n}{n}+(1-\frac{1}{\beta})\frac{1}{n}.
\end{align*} 
We denote the  empirical particle density of \eqref{e:DBPcopy} and its symmetrized version, which is also the symmetrized empirical singular values of $H(t)$, as
\begin{align*}
\nu_t^n=\frac{1}{n}\sum_{i=1}^n \delta_{s_i(t)}, \quad \hat\nu_t^n=\frac{1}{2n}\sum_{i=1}^n (\delta_{s_i(t)}+\delta_{-s_i(t)}),
\end{align*}
We
prove a large deviation principle for $\{\hat\nu_{t}^{n}\}_{0\leq t\leq 1}$, in Section \ref{secbessel}. The rate function is given by
\begin{align}
\label{e:ratt}
S^\al_{\hat\mu_{0}}(\{\hat\nu_t\}_{0\leq t\leq 1})=\sup_{f\in \cC^{2,1}_b} S^\al(\{\hat\nu_t,f_t\}_{0\leq t\leq 1}),
\end{align}
where
\begin{align*}
S^\al(\{\hat\nu_t,f_t\}_{0\leq t\leq 1})
&=
\hat\nu_1(f_1)-\hat\mu_0(f_0)-\int_0^1\int \del_s f_s(x) \rd\hat\nu_s(x)\rd s-\frac{1}{2}\int_0^1\int \frac{f_s'(x)-f_s'(y)}{x-y}\rd \hat\nu_s(x)\rd \hat\nu_s(y)\rd s \\
& -\frac{\alpha}{2}\int_0^1\int \frac{f_s'(x)}{x} \rd \hat\nu_s(x)\rd s  -\frac{1}{8\beta }\int^1_0 \int (f_s'(x)-f_s'(-x))^2\rd, \hat\nu_s(x)\rd s.
\end{align*}
If $\hat\nu_0\neq \hat\mu_0$,  $S^\al_{\hat\mu_{0}}(\{\hat\nu_t\}_{0\leq t\leq 1})=\infty$.
We then prove the following result 

\begin{theorem}\label{main2}
Fix a  symmetric probability measure  $\hat\mu_0$  and an initial condition with symmetrized empirical measure $\hat\nu^n_0$ with uniformly bounded second moment converging weakly to $\hat\mu_{0}$.  Then,
if $\alpha_n$ converges towards $\alpha\in [0,\infty)$ when $n$ goes to infinity {{so that either $\alpha_n\ge 1/\beta n$ or $\alpha_n\equiv 0$}},   the distribution of   the empirical particle density $\{\hat\nu_t^n\}_{0\leq t\leq 1}$ of the Dyson Bessel process \eqref{e:density} satisfies a   large deviations principle in the scale $n^2$ and with good rate function $S_{\hat\nu_0}^\al$. In particular, 
for any continuous symmetric measure-valued  process $\{\hat\nu_t\}_{0\leq t\leq 1}$, we have: 
\begin{align}\begin{split}\label{e:ulbb}
&\phantom{{}={}}\lim_{\delta\rightarrow 0}\liminf_{n\rightarrow\infty}\frac{1}{n^2}\log \bP(\{\hat\nu^n_t\}_{0\leq t\leq 1}\in \bB(\{\hat\nu_t\}_{0\leq t\leq 1}, \delta))\\
&=\lim_{\delta\rightarrow 0}\limsup_{n\rightarrow\infty}\frac{1}{n^2}\log \bP(\{\hat\nu^n_t\}_{0\leq t\leq 1}\in \bB(\{\hat\nu_t\}_{0\leq t\leq 1}, \delta))
= -{{S^\al_{\hat\mu_0}(\{\hat\nu_t\}_{0\leq t\leq 1}).}}
\end{split}\end{align}
\end{theorem}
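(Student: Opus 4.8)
\textbf{Proof proposal for Theorem \ref{main2}.}
The plan is to follow the classical Dawson--G\"artner/Kipnis--Olla scheme for large deviations of interacting diffusions, adapted to the singular Dyson Bessel drift, and to split the argument into the usual three parts: exponential tightness, the upper bound, and the matching lower bound. First I would establish \emph{exponential tightness} of the laws of $\{\hat\nu^n_t\}_{0\le t\le 1}$ in the space of continuous measure-valued paths. This uses that the symmetrized empirical measure has uniformly bounded second moment at time $0$, together with It\^o's formula applied to $\hat\nu^n_t(x^2)$ and to test functions $\hat\nu^n_t(f)$ for $f\in\cC^{2,1}_b$; the martingale part has bracket of order $1/n^2$, so exponential Chebyshev and the Garsia--Rodemich--Rumsey inequality give equicontinuity estimates at the exponential scale $n^2$. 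The singular terms $\frac1n\sum_{j\ne i}1/(s_i\pm s_j)$ and $\alpha_n/(2s_i)$ are controlled because, after symmetrization and pairing $i\leftrightarrow j$, the first sum becomes $\frac12\iint \frac{f'(x)-f'(y)}{x-y}\rd\hat\nu^n\rd\hat\nu^n$, which is bounded for $f\in\cC^2_b$, and the $1/s_i$ term is handled by choosing $f$ with $f'(x)/x$ bounded (or by a separate a priori bound $\int x^{-2}\rd\hat\nu^n_s<\infty$ coming from the repulsion at the origin when $\alpha_n\ge 1/\beta n$).

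Second, for the \emph{upper bound} I would, for each fixed $f\in\cC^{2,1}_b$, write down via It\^o's formula the exponential martingale
\[
M^n_t(f)=\exp\!\left(n^2\Big[\hat\nu^n_t(f_t)-\hat\nu^n_0(f_0)-\int_0^t\!\!\int\partial_s f_s\,\rd\hat\nu^n_s\,\rd s-\tfrac12\int_0^t\!\!\iint\tfrac{f_s'(x)-f_s'(y)}{x-y}\rd\hat\nu^n_s\rd\hat\nu^n_s\rd s-\tfrac{\alpha_n}{2}\int_0^t\!\!\int\tfrac{f_s'(x)}{x}\rd\hat\nu^n_s\rd s-\tfrac1{8\beta}\int_0^t\!\!\int(f_s'(x)-f_s'(-x))^2\rd\hat\nu^n_s\rd s\Big]\right),
\]
which is a nonnegative supermartingale (in fact a martingale after localizing the singular drift) of expectation $\le 1$. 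Taking logarithms, optimizing over $f$, and using a minimax/Varadhan-type argument on a countable dense family of test functions yields the weak upper bound $\limsup \frac1{n^2}\log\bP(\cdot\in\bB)\le -S^\al_{\hat\mu_0}$; exponential tightness upgrades this to all closed sets. One must check here that $\alpha_n\to\alpha$ lets us replace $\alpha_n$ by $\alpha$ in the limit, and that the identification ``$S^\al_{\hat\mu_0}(\{\hat\nu_t\})=\infty$ unless $\hat\nu_0=\hat\mu_0$ and the path has finite energy'' holds, i.e. that finiteness of the rate function forces enough regularity ($\hat\nu_t\ll\rd x$, $\int_0^1\int \hat\rho_s^3<\infty$, finite $\int x^{-2}\hat\rho_s$) — this is done by testing against approximations of singular functions and a Riesz-type computation, exactly as in the HCIZ case \cite{GZ3}.

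Third, and this is where the real work lies, the \emph{lower bound}. The standard route is: given a target path $\{\hat\nu_t\}$ with $S^\al_{\hat\mu_0}(\{\hat\nu_t\})<\infty$, first reduce to paths that are ``nice'' (bounded density bounded away from $0$ on a compact set, smooth velocity field $u_s$, staying away from the origin) by an explicit regularization/mollification that is continuous for the rate function — so that it suffices to produce such nice paths as large-deviation events; then for a nice path perform a change of measure (Girsanov) that tilts the Dyson Bessel process so that its hydrodynamic limit becomes exactly $\{\hat\nu_t\}$, and show the tilted dynamics indeed concentrates on $\{\hat\nu_t\}$ with the cost given by $S^\al$. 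The tilted process is a Dyson-Bessel-type SDE with an extra smooth drift $f_s'(s_i(t))$ determined by the equation $u_s = -\tfrac12\,\mathrm{PV}\!\int\tfrac{2\hat\rho_s(y)}{x-y}\rd y - \tfrac{\alpha}{2x} + \tfrac1{2\beta}\,(\text{term})+ f_s'(x)$; concentration of its empirical measure requires a law of large numbers / propagation-of-chaos statement for Dyson Bessel processes with a \emph{general smooth external potential}. The main obstacle is precisely establishing this LLN with the additional origin singularity of the Bessel drift: I expect to use the quantitative coupling estimate of Proposition \ref{p:lowerbound2}, which controls particle locations for Dyson Brownian motion with general potential, together with an improved uniqueness criterion for the limiting McKean--Vlasov (free-Fokker--Planck) PDE with smooth fields in the spirit of \cite{LLX} rather than the weaker \cite{cabanal2003discussions} argument. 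Uniqueness of the McKean--Vlasov solution is what guarantees that the tilted empirical measure has no choice but to converge to the prescribed $\{\hat\nu_t\}$, and handling the $\alpha/(2x)$ singularity inside that uniqueness proof — ensuring particles do not collide at $0$ and the entropy/energy estimates survive — is the delicate point. Once the nice-path lower bound is in place, an approximation argument using lower semicontinuity of $S^\al_{\hat\mu_0}$ and the density of nice paths (in the sense that any finite-cost path is a limit of nice ones with converging cost) closes the proof, and the stated limit \eqref{e:ulbb} follows by combining the upper and lower bounds.
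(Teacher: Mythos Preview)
Your overall architecture --- exponential tightness, upper bound via an exponential martingale, lower bound via regularization and Girsanov tilting --- matches the paper's, and your upper bound is essentially the same as in Section~\ref{s:ldup}. The substantive difference lies in the lower bound. You propose to tilt the Dyson \emph{Bessel} process directly with a smooth external field and then prove concentration for a system carrying both the singular $\alpha_n/(2s_i)$ drift and the smooth tilt; you correctly identify that handling the origin singularity inside that concentration/McKean--Vlasov step is ``the delicate point,'' but you do not resolve it, and Proposition~\ref{p:lowerbound2} as stated applies to Dyson Brownian motion with a \emph{Lipschitz} drift, which $x\mapsto \alpha/(2x)$ is not.

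The paper sidesteps this difficulty entirely by a different decomposition. The key device is Proposition~\ref{p:changem}: a Girsanov change of measure that absorbs the singular Bessel part of the drift into an \emph{explicit} Radon--Nikodym density (built from $\theta(s_1,\dots,s_n)=\frac{\beta}{2}(\sum_{i<j}\log(s_i+s_j)+\alpha_n n\sum\log s_k)$), reducing the Bessel law $\bP^\fa$ to plain Dyson Brownian motion $\bQ$. After regularizing the target path by rectangular free convolution (Remark~\ref{r:rectconv}), shifting it away from the origin by $2\fa$ so the stopping time $\tau_\fa$ never triggers, and smoothing (Proposition~\ref{p:approximation}), the lower bound for $\bP$ becomes a lower bound for $\bQ$ plus the explicit contribution of the density~\eqref{e:haha}. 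The concentration step then only needs Proposition~\ref{p:lowerbound2} for DBM with a genuinely smooth drift --- no singularity survives. The explicit density contributes precisely the $\alpha$-dependent terms distinguishing $S^\al$ from the DBM rate $S$. In short: your route would need a new concentration estimate for singular-drift Dyson processes, while the paper's route factors the singularity out analytically and reuses the DBM machinery of Section~\ref{sec-DBM} wholesale.
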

\begin{remark}
In Theorem \ref{main2}, we assumed that either $\al_n\geq 1/\beta n$ or $\al_n\equiv 0$. This assumption is always true for $\beta=1,2$ and $m\geq n$. If this condition is violated, i.e. $0<\al_n<1/\beta n$, the particles $s_n(t)$ and $s_{-n}(t)$ in \eqref{e:DBPcopy} may collapse at $0$. In this case, to make sense of \eqref{e:DBPcopy}, we need to specify the boundary condition when they collapse at $0$. We will not discuss these conditions in this paper. 
\end{remark}

As a consequence, we deduce from the contraction principle \cite{De-Ze} that \eqref{conv1} holds and more precisely
\begin{corollary}
For any  symmetric probability measures  $\hat\nu^{n}_{A},\hat\nu^{n}_{B}$  with uniformly bounded  second moment converging weakly towards $\hat\nu_{A},\hat\nu_{B}$, under the measure \eqref{e:lawXX0} we have
\begin{align}\label{conv1}
\lim_{n\rightarrow \infty}\frac{1}{n^2}\log \bP(\hat\nu^n_{B} \in \bB(\hat\nu_B,\delta))
= -I_{\hat\nu_{A}}(\hat\nu_{B})+o_{\delta(1)},
\end{align}
where
$$I_{\hat\nu_{A}}(\hat\nu_{B})=\inf_{\hat\nu_{1}=\hat\nu_{B}}S_{\hat\nu_A}^\al(\{\hat\nu_{t}\}_{0\leq t\leq 1})\,.$$

\end{corollary}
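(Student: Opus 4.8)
# Proof Proposal for the Corollary

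The plan is to deduce the corollary from Theorem \ref{main2} by a combination of the contraction principle and Varadhan-type integration of the large deviation principle, following the heuristic computation already sketched in the introduction. First I would observe that the map $\{\hat\nu_t\}_{0\leq t\leq 1}\mapsto \hat\nu_1$ sending a continuous symmetric measure-valued process to its terminal value is continuous for the relevant topology; by the contraction principle \cite{De-Ze}, the terminal law $\hat\nu_1^n=\hat\nu_B^n$ of the Dyson Bessel process started from $\hat\nu_A^n$ then satisfies a large deviation principle at scale $n^2$ with good rate function $\hat\nu_B\mapsto \inf_{\hat\nu_1=\hat\nu_B}S^\al_{\hat\nu_A}(\{\hat\nu_t\}_{0\leq t\leq 1})$, which is exactly $I_{\hat\nu_A}(\hat\nu_B)$ as defined in the statement. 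Here one must check that the infimum is attained (so that $I_{\hat\nu_A}$ is itself a good rate function), which follows from goodness of $S^\al_{\hat\nu_A}$ and lower semicontinuity of the terminal-value map on the level sets.

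The second step is to transfer this large deviation principle, which is stated for the Dyson Bessel process \eqref{e:DBPcopy} with drift $\alpha_n/2s_i$, to the joint law \eqref{e:lawXX0} of singular values of $X_n=A_n+G_n/\sqrt n$. These are literally the same object: the singular values of $H(t)=A_n+G_n(t)/\sqrt n$ at time $t=1$ evolve according to \eqref{e:DBPcopy}, and their symmetrized empirical distribution has law given by the $B_n$-marginal of \eqref{e:lawXX0}. So $\bP(\hat\nu_B^n\in\bB(\hat\nu_B,\delta))$ under \eqref{e:lawXX0} coincides with $\bP(\hat\nu_1^n\in\bB(\hat\nu_B,\delta))$ for the process, and the large deviation upper and lower bounds from Theorem \ref{main2} plus contraction give precisely \eqref{conv1} with the stated rate function, with $o_\delta(1)$ absorbing the $\lim_{\delta\to 0}$ discrepancy between open and closed balls. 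One small point to verify is that the hypothesis of Theorem \ref{main2} — uniformly bounded second moment of $\hat\nu_0^n=\hat\nu_A^n$ and $\alpha_n\geq 1/\beta n$ or $\alpha_n\equiv 0$ — holds, the latter being automatic for $\beta\in\{1,2\}$ and $m\geq n$ as noted in the remark.

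I would then double-check that $I_{\hat\nu_A}$ as a contracted rate function genuinely matches the expression one wants downstream in Theorem \ref{main1}: namely that combining \eqref{conv1} with the explicit form of $S^\al$ in \eqref{e:ratt}, after the change of variables eliminating the terminal-value constraint and using the conservation-of-mass equation $\del_s\hat\rho_s+\del_x(\hat\rho_s u_s)=0$ to rewrite the supremum over test functions as the kinetic-energy functional, reproduces the formula \eqref{e:arate}; but this last identification is really the content of Section \ref{sph-sec} and not of the corollary itself, so for the corollary I only need the abstract contraction statement. The main obstacle is not in this corollary — it is essentially a formal consequence once Theorem \ref{main2} is in hand — but rather in making sure the topology used on path space makes the terminal-evaluation map continuous and the ball $\bB(\hat\nu_B,\delta)$ in the space of measures pulls back to a set to which the path-space large deviation principle applies; this is handled by the standard projective/Dawson–Gärtner-type arguments for measure-valued processes, exactly as in \cite{GZ3} and \cite{BAG}.
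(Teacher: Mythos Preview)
Your proposal is correct and takes essentially the same approach as the paper: the paper states the corollary as a direct consequence of Theorem~\ref{main2} via the contraction principle \cite{De-Ze}, with the identification of the $B_n$-marginal of \eqref{e:lawXX0} as the time-one law of the Dyson Bessel process. Your additional remarks about checking the hypotheses of Theorem~\ref{main2} and the continuity of the terminal-evaluation map are appropriate elaborations of what the paper leaves implicit.
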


Theorem \ref{main1} is deduced from Theorem \ref{main2} in section \ref{sph-sec}. The main difficulty to prove Theorem \ref{main2} lies in the singularity of the potential at the origin and the repulsion between the particles. To prove it, we revisit  in section \ref{sec-DBM} the large deviation principle for the empirical measure of the Dyson Brownian motion  of \cite{GZ3} and extend it to to all values of $\beta$ greater or equal to one. 

{
\noindent\textbf{Acknowledgements }
The research of J.H. is supported by the Simons Foundation as a Junior Fellow at
the Simons Society of Fellows, and NSF grant DMS-2054835. The work  of A. Guionnet is partly supported  by ERC Project LDRAM : ERC-2019-ADG Project 884584.  We thank O. Zeitouni for many inspiring discussions about spherical integrals, including preliminary ideas about the questions addressed in this article.
}

\noindent{\bf{Notations}}
$\cO(n)$ denotes  the orthogonal group in dimension $n$ and 
$\cU(n)$ the unitary group in dimension $n$. We denote by 
$d(\cdot, \cdot)$ the $2$-Wasserstein distance  defined on the space $\mathcal P_2(\mathbb R)$ of probability measures with finite second moment by 
$$d(\mu,\nu)=\inf\left\{ \int |x-y|^2 d\pi(x,y)\right\}^{1/2},$$
where the infimum is taken over distribution on $\mathbb R^2$ with marginal distribution $\mu$ and $\nu$. 
$\cC_b^{2,1}(\mathbb R\times [0,1])$ is the space of functions on $\mathbb R\times [0,1]$ with  bounded first two  derivatives in $x$ and bounded derivative in $t$.
$\cC([0,1], \bM_1(\bR))$ is the space of continuous (with respect to weak topology) measure valued process.

\section{Dyson Bessel Process}
In this section we introduce Dyson Bessel process, which is the singular value process of rectangular matrix brownian motions. Then in section \ref{s:changeM}, we will write Dyson Bessel process as a change of measure from Dyson Brownian motion using Girsanov's theorem.

\subsection{Decomposition}
The rectangular spherical integral \eqref{e:UAVB} is related to real ($\beta=1$) and complex ($\beta=2$) rectangular random matrices with nonzero mean. 
We consider an $n\times m$ rectangular random matrices $X_n$ with nonzero mean,
\begin{align}\label{e:defX}
X_n=A_n+\frac{1}{\sqrt n}G_n,
\end{align}
where $A_n=\bE[X_n]$ is deterministic, and $G_n$ is an $n\times m$ rectangular matrix with independent real ($\beta=1$) or complex ($\beta=2$) Gaussian entries.
We denote the singular value decomposition of $X_n=UB_nV^*$, with $B_n=\diag\{b_1,b_2,\cdots, b_n\}$. Then we can rewrite the law of $X_n$ as 
\begin{align}\begin{split}\label{e:lawU}
&\phantom{{}={}}\left(\sqrt{\frac{\beta n}{2\pi}}\right)^{\beta mn}e^{-\frac{\beta n}{2}\Tr((X_n-A_n)(X_n-A_n)^*)}\rd X_n\\
&\propto\prod_i b_i^{\beta(m-n+1)-1}
\prod_{i<j}|b^2_i-b^2_j|^\beta e^{-\frac{\beta N}{2}(\sum_i b^2_i+\sum a^2_i)+\beta n\Re[\Tr(A_n^*UB_nV^*)]}\rd U\rd V\rd B_n.
\end{split}\end{align}
Therefore, conditioning on the singular values of $X_n$, i.e. the matrix $B_n$, the joint law of singular vectors of $X_n$, i.e.  $U,V$ is given by the integrand of the rectangular spherical integral \eqref{e:UAVB}
\begin{align}\label{e:lawU2}
\frac{e^{\beta  N\Re[\Tr(A_n^*UB_nV^*)]}}{Z^\beta_{m,n}}\rd U\rd V.
\end{align}

We study the random matrices $X_n$ as in \eqref{e:defX} via a dynamical approach. By constructing a matrix valued real/complex Brownian motions starting from $A_n$, its value at time $t=1$ has the same law as $X_n$. 
\begin{theorem}[Dyson Bessel Process]  Take $\beta\ge 1$. Fix  $m\geq n$, and let $H(t)$ be a $n\times  m$ matrix with entries given by independent  real/complex Brownian motions starting from $A_n$: 
\begin{align}\label{e:defBB}
H(t)=A_n+\frac{1}{\sqrt{n}}G(t),
\end{align} 
The singular values $s_1(t)\geq s_2(t)\geq \cdots \geq s_{n-1}(t)\geq |s_n(t)|$ of $H(t)$ satisfies the following stochastic differential equations
\begin{align}\begin{split}\label{e:dsk}
\d s_i(t) 
&=\frac{\d W_{i}}{\sqrt{\beta n}}+\left(\frac{1}{2n}\sum_{j: j \neq i}\frac{1}{s_i(t)-s_j(t)}+\frac{1}{2n}\sum_{j: j\neq i}\frac{1}{s_i(t)+s_j(t)}+\frac{\al_n}{2 s_i(t)}\right)\d t,\quad 1\leq i\leq n,
\end{split}\end{align}
where 
\begin{align*}
 \alpha_n=\frac{m-n}{n}+\left(1-\frac{1}{\beta}\right)\frac{1}{n},
\end{align*}
and $W_1,W_2,\cdots, W_n$ are independent Brownian motions. We denote by $\mathbb P$ the law of ${\bf s}(t)=(s_1(t),\ldots, s_{n}(t)), 0\le t\le 1$. 
\end{theorem}

The eigenvalues process of $\la_1(t)\geq \la_2(t)\geq \cdots\geq \la_n(t)$ of $H(t)H^*(t)$ has been intensively studied in the literature \cite{bru1991wishart,bru1989diffusions, demni2007laguerre,demni2009radial,OCKo}, called the $\beta$-Laguerre process or $\beta$-Wishart process
\begin{align}\label{e:DLWM}
\d \lambda_i(t)=&2\sqrt{\lambda_i}\frac{\d W_{i}(t)}{\sqrt{\beta n}}+\left(\frac{1}{n}\sum_{j:j\neq i}\frac{\lambda_i(t)+\lambda_j(t)}{\lambda_i(t)-\lambda_j(t)}+\frac{m}{n}\right)\d t, \quad 1\leq i\leq n,
\end{align} 
where $W_1,W_2,\cdots, W_n$ are independent Brownian motions. 
In   \cite{OCKo},  the case $\beta=2$ Laguerre  process was shown to correspond to squared Bessel processes conditioned never to collide in the sense of Doob.
It is known that for $\beta\geq 1$ and $m\geq n$, \eqref{e:DLWM} has a unique strong solution satisfying $\la_1(t)> \la_2(t)> \cdots >\la_n(t)\geq 0$ for $t>0$. Then a formal calculation gives that $s_i(t)=\sqrt{\la_i(t)}$ satisfies \eqref{e:dsk}. When $n=1$, $s_1$ is a Bessel process. We call the process \eqref{e:dsk} Dyson Bessel process.
The same argument as in \cite[Lemma 4.3.3]{AGZ}, we can show that for $\beta\geq 1$, $\alpha_n\geq 1/\beta n$, and any initial condition $s_1(0)\geq s_2(0)\geq \cdots\geq s_n(0)\geq 0$, the unique strong solution of \eqref{e:dsk} satisfy $s_1(t)> s_2(t)> \cdots s_n(t)> 0$ for $t>0$. Therefore, $s_1(t)> s_2(t)> \cdots > s_{n-1}(t)>s_n(t)>0$ has the same law of singular values of $H(t)$. We notice that $\alpha_n\geq 1/\beta n$ is satisfied for any $m\geq n$ and $\beta=2$. In the special case that $\beta=1$, $m=n$ and $\al_n=0$, as discussed in \cite[Appendix 1]{che2019universality}, $s_n(t)$ can be negative, and $s_1(t)> s_2(t)> \cdots > s_{n-1}(t)>|s_n(t)|>0$ has the same law of singular values of $H(t)$. For our study of Dyson Bessel process, we restrict ourselves to these two choices of parameters

\subsection{Change of Measure}\label{s:changeM}
In this section, we relate the Dyson Bessel process \eqref{e:dsk} with the Dyson Brownian motion by a change of measure using Girsanov's theorem. We recall the Dyson Brownian motion (DBM) is given for $\beta\geq 1$ by
\begin{align}\begin{split}\label{e:DBM}
\d  x_i(t) 
&=\frac{\d W_{i}(t)}{\sqrt{\beta n}}+\frac{1}{2n}\sum_{j: j \neq i}\frac{\d t}{ x_i(t)- x_j(t)}.
\end{split}\end{align}
We denote the law of Dyson  Brownian motion \eqref{e:DBM} as $\bQ$. 

The Dyson Bessel process \eqref{e:dsk} can be obtained from the DBM \eqref{e:DBM} by a change of measure using an exponential martingale constructed from the following function 
\begin{align}\label{e:theta}
\theta( s_1,  s_2,\cdots,  s_n)=\frac{\beta}{2}\left(\sum_{i<j} \log( s_i+ s_j)+\alpha_n n\sum \log s_k\right),\quad s_1\geq s_2\geq \cdots \geq s_n.
\end{align} 
The above function $\theta$ has logarithmic singularity when $s_n$ is close to $0$. Fix a small parameter $\fa>0$,  we define the stopping time $\tau_\fa$, the first time that $s_n(t)$ gets too close to $0$,
 \begin{align}\label{e:taue}
 \tau_{\fa}=\inf\{t\geq 0: s_n\leq \fa\}.
 \end{align}
Then for $t\leq \tau_\fa$, we have $s_n(t)\geq \fa$, and 
$\theta(s_1(t), s_2(t), \cdots, s_n(t))$ is bounded below uniformly.

\begin{proposition}\label{p:changem}
Let $\cF_{t}$ be the $\sigma$ algebra generated by the Brownian motions $\{W_i(t)\}$. 
We take $\bQ$ the law of DBM 
\begin{align}\label{e:DBMa}
\rd  x_i(t)=\frac{\rd W_i(t)}{\sqrt{\beta n}}+\frac{1}{2n}\sum_{j:j\neq i}\frac{\rd t}{ x_i(t)- x_j(t)},\quad 1\leq i\leq n,
\end{align}
and $\bP^\fa$ the law of the following modified Dyson Bessel process
\begin{align*}
\d s_i(t) =\frac{\d W_{i}(t)}{\sqrt{\beta n}}+\frac{1}{2n}\sum_{j:j\neq i}\frac{1}{s_i(t)-s_j(t)}+\bm1(t\leq \tau_\fa)\left(\frac{1}{2n}\sum_{j:j\neq i}\frac{1}{s_i(t)+s_j(t)}+\frac{\alpha_n}{2 s_i(t)}\right)\d t,
\end{align*}
for $1\leq i\leq n$.
Then the two laws $\bP^\fa$ and $\bQ$ are related by a change of measure
\begin{align*}
\bP^\fa=e^{{L_{1\wedge \tau_\fa}-\frac{1}{2}\langle L,L\rangle_{1\wedge \tau_\fa}}} \bQ,
\end{align*}
where the exponent is given by
\begin{align*}\begin{split}
L_{t\wedge \tau_\fa}-\frac{1}{2}\langle L, L\rangle_{t\wedge \tau_\fa}
&=\left.\theta(x_1({u}), \cdots, x_n({u}))\right|_0^{t\wedge \tau_\fa}- \frac{\beta n}{2}
\int_0^{t\wedge \tau_\fa}\sum_i \frac{\alpha_n^2}{4x_i^2(u)}\rd u
\\
&-\left(\frac{\beta}{2}-1\right)\int_{0}^{t\wedge \tau_\fa}\frac{1}{4n}\sum_{k\neq \ell}\frac{\rd u}{(x_k(u)+x_\ell(u))^2}
+\int_{0}^{t\wedge \tau_\fa}\frac{\alpha_n}{4}\sum_k \frac{\rd u}{x_k^2(u)}.
\end{split}\end{align*}
\end{proposition}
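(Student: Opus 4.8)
The plan is to apply Girsanov's theorem to pass from the reference measure $\bQ$ (Dyson Brownian motion) to $\bP^\fa$ (the modified Dyson Bessel process), identifying the Radon-Nikodym density explicitly as an exponential martingale. First I would observe that the two systems of SDEs share the same driving Brownian motions $\{W_i\}$ and the same "DBM drift" $\frac{1}{2n}\sum_{j\neq i}(x_i-x_j)^{-1}$; the only difference is the extra drift term $b_i(t) = \bm1(t\le\tau_\fa)\bigl(\frac{1}{2n}\sum_{j\neq i}\frac{1}{s_i+s_j}+\frac{\alpha_n}{2s_i}\bigr)$ present in $\bP^\fa$ but not in $\bQ$. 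Girsanov's theorem then says that, up to time $1\wedge\tau_\fa$, the law $\bP^\fa$ is absolutely continuous with respect to $\bQ$ with density $\exp(L_{1\wedge\tau_\fa}-\tfrac12\langle L,L\rangle_{1\wedge\tau_\fa})$, where $L_t = \sqrt{\beta n}\sum_i \int_0^t b_i(u)\,\dd W_i(u)$ is the relevant stochastic integral. The stopping at $\tau_\fa$ is exactly what guarantees the Novikov-type integrability condition needed for the exponential to be a true martingale: for $t\le\tau_\fa$ one has $s_n(t)\ge\fa$, so all the terms $\frac{1}{s_i+s_j}$, $\frac{1}{s_i}$ and hence $\theta$ are bounded, making $\langle L,L\rangle_{1\wedge\tau_\fa}$ bounded and Novikov's criterion trivially satisfied.

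The substantive computation is to rewrite $L_{t\wedge\tau_\fa}-\tfrac12\langle L,L\rangle_{t\wedge\tau_\fa}$ in the closed form stated, i.e.\ as a boundary term $\theta(x(u))\big|_0^{t\wedge\tau_\fa}$ minus several explicit finite-variation integrals. The key step here is to apply It\^o's formula to $\theta(x_1(t),\dots,x_n(t))$ along the DBM \eqref{e:DBMa}. Writing out $\dd\theta = \sum_i \partial_i\theta\,\dd x_i + \tfrac{1}{2\beta n}\sum_i \partial_i^2\theta\,\dd t$ and substituting $\dd x_i = \frac{\dd W_i}{\sqrt{\beta n}} + \frac{1}{2n}\sum_{j\neq i}\frac{\dd t}{x_i-x_j}$, the martingale part $\frac{1}{\sqrt{\beta n}}\sum_i\partial_i\theta\,\dd W_i$ is precisely (a constant multiple of) $\dd L$, while $\langle L,L\rangle$ has density $\frac{1}{\beta n}\sum_i(\partial_i\theta)^2$. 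One then checks that $\frac12\sum_i(\partial_i\theta)^2$ combined with the second-order It\^o term $\frac{1}{2\beta n}\sum_i\partial_i^2\theta$ and the cross term $\sum_i\partial_i\theta\cdot\frac{1}{2n}\sum_{j\neq i}(x_i-x_j)^{-1}$ collapses, after the usual Vandermonde/antisymmetrization identities (e.g.\ $\sum_{i\neq j}\frac{1}{(x_i-x_j)(x_i+x_j)}$-type simplifications and $\sum_{i\neq j}\frac{1}{(x_i-x_j)^2}$ cancellations), into the three correction integrals displayed: the $\frac{\beta n}{2}\int\sum_i\frac{\alpha_n^2}{4x_i^2}$ term coming from $|\partial_i\theta|^2$ through the $\alpha_n n\log s_k$ piece of $\theta$, the $(\frac\beta2-1)\int\frac{1}{4n}\sum_{k\neq\ell}\frac{1}{(x_k+x_\ell)^2}$ term from $\partial_i^2$ of the $\sum_{i<j}\log(s_i+s_j)$ piece, and the $\int\frac{\alpha_n}{4}\sum_k\frac{1}{x_k^2}$ term from the interaction between $\partial_i$ of $\log s_i$ and $\partial_i$ of $\sum_{j<k}\log(s_j+s_k)$ together with the second-order term. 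I would organize this by splitting $\theta = \theta_1 + \theta_2$ with $\theta_1 = \frac\beta2\sum_{i<j}\log(s_i+s_j)$ and $\theta_2 = \frac{\beta\alpha_n n}{2}\sum_k\log s_k$, computing the first and second derivatives of each, and assembling.

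The main obstacle I anticipate is the bookkeeping in this It\^o expansion: one must carefully track all the diagonal ($j=i$ excluded) sums, the symmetrization identities that make the $\frac{1}{(x_i-x_j)^2}$ and $\frac{1}{(x_i-x_j)(x_i+x_j)}$ terms telescope or cancel, and the precise numerical constants so that the $\frac\beta2$ versus $(\frac\beta2-1)$ discrepancy (the It\^o correction responsible for the $\beta$-dependence) comes out right. A clean way to handle this is to note that $\theta$ is, up to the $\alpha_n$-logarithmic term, exactly the logarithm of the Jacobian factor $\prod_{i<j}(s_i+s_j)^\beta\prod_k s_k^{\beta\alpha_n n}$ appearing in \eqref{e:lawU}, so one can cross-check the final drift by verifying that the generator of the $\theta$-transformed DBM indeed reproduces \eqref{e:dsk} up to time $\tau_\fa$ — the added drift should be $\frac{1}{\beta n}\partial_i\theta = \frac{1}{2n}\sum_{j\neq i}\frac{1}{s_i+s_j} + \frac{\alpha_n}{2s_i}$, which is precisely the Dyson Bessel correction. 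This consistency check both guides the computation and confirms the stated form of the exponent.
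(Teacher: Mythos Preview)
Your proposal is correct and follows essentially the same approach as the paper: apply It\^o's formula to $\theta({\bf x}(t))$ under the DBM $\bQ$ to identify the martingale $L_t=\sum_i\partial_i\theta\,\frac{\rd W_i}{\sqrt{\beta n}}$, use the stopping at $\tau_\fa$ to bound $\langle L,L\rangle$ and invoke Novikov, and then combine the It\^o drift, the cross term $\frac{1}{4n}\sum_{i\neq j}\frac{\partial_i\theta-\partial_j\theta}{x_i-x_j}$, and $\frac12\langle L,L\rangle$ via the symmetrization identities to obtain the displayed closed form for the exponent. Your attribution of which term in the final formula comes from which piece of the expansion is slightly imprecise (e.g.\ the $(\tfrac\beta2-1)$ coefficient arises from the combination of the second-order It\^o term and the DBM cross term, not from $\partial_i^2\theta_1$ alone), but this is bookkeeping you would sort out when actually carrying out the computation.
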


\begin{remark}
We remark that $\bP^\fa$ depends on $\fa>0$. For any event  $\Omega$ of singular value DBM, we can can lower bound its probability in the following way
\begin{align*}
\bP(\Omega)\geq \bP(\Omega\cap \{s_n\geq \fa\})=\bP^\fa(\Omega\cap \{s_n\geq \fa\}).
\end{align*}
\end{remark}

\begin{proof}[Proof of Proposition \ref{p:changem}] 
The first and second derivatives of $\theta$ are given by
\begin{align}\begin{split}\label{e:dtheta}
\del_{ s_i}\theta( s_1,  s_2,\cdots,  s_n)&=\frac{\beta}{2}\left(\sum_{j:j\neq i}\frac{1}{s_i+s_j}+ \frac{\alpha_n n}{s_i}\right),\\
\del^2_{ s_i}\theta( s_1,  s_2,\cdots,  s_n)&=-\frac{\beta}{2}\left(\sum_{j:j\neq i}\frac{1}{(s_i+s_j)^2}+ \frac{\alpha_n n}{s_i^2}\right),
\end{split}\end{align}
for $1\leq i\leq n$.
Since $\theta$ is $\cC^\infty$ on sets where it is  bounded below, It{\^o}'s lemma gives that if ${\bf x}(t)=(x_{1}(t),\ldots,x_{n}(t))$, 
\begin{align}\label{e:dL}
\rd \theta({\bf x (t)})
&=\rd L_t+
\frac{1}{4n}\sum_{i\neq j}\frac{\del_{ x_i}\theta({\bf x}(t))-\del_{ x_j}\theta({\bf x}(t))}{ x_i(t)- x_j(t)}\d t
+\sum_i\frac{\del^2_{ x_i}\theta ({\bf x}(t))}{2\beta n} \d t,
\end{align}
where the martingale term $L_t$ is 
\begin{align*}
\rd L_t=\sum_{i}\del_{ x_i}\theta({\bf x}(t))\frac{\rd W_i(t)}{\sqrt{\beta n}}
=\sum_i \left(\frac{\sqrt \beta}{2\sqrt n}\sum_{j: j\neq i}\frac{1}{ x_i(t)+ x_j(t)}+\sqrt {\beta n}\frac{\alpha_n}{ 2x_i(t)}\right)\rd W_i(t),
\end{align*}
Its quadratic variance is given by
\begin{align*}
\langle L, L\rangle_t=\int^t_0 \sum_i \left(\frac{\sqrt \beta}{2\sqrt n}\sum_{j:j\neq i}\frac{1}{ x_i(u)+ x_j(u)}+\sqrt {\beta n}\frac{\alpha_n}{ 2x_i(u)}\right)^2\rd u.
\end{align*}
For the second term on the righthand side of \eqref{e:dL}, using \eqref{e:dtheta} we have
\begin{align}\begin{split}\label{e:term1}
&\phantom{{}={}}\frac{1}{4n}\sum_{i\neq j}\frac{\del_{ x_i}\theta({\bf x}(t)) -\del_{ x_j}\theta({\bf x}(t))}{ x_i- x_j}
=-\frac{\beta}{8n}\sum_{i\neq j\neq k}\frac{1}{(x_i+x_k)(x_j+x_k)}-\frac{\beta \alpha_n}{8}\sum_{i\neq j}\frac{1}{x_i x_j}\\
&=-\frac{\beta}{8n}\sum_i\left(\sum_{j:j\neq i}\frac{1}{(x_i+x_j)}\right)^2+\frac{\beta}{8n}\sum_{i\neq j}\frac{1}{(x_i+x_j)^2}-\frac{\beta \alpha_n}{8}\left(\sum_{i}\frac{1}{x_i}\right)^2+\frac{\beta \alpha_n}{8}\sum_i \frac{1}{x_i^2}.
\end{split}\end{align}
For the last term on the righthand side of \eqref{e:dL}, using \eqref{e:dtheta}we have
\begin{align}\label{e:term2}
\sum_i\frac{\del^2_{ x_i}\theta({\bf x}(t))}{2\beta n}=-\frac{1}{4n}\sum_{i\neq j}\frac{1}{(x_i+x_j)^2}-\frac{\alpha_n}{4}\sum_{i}\frac{1}{x_i^2}.
\end{align}
By plugging \eqref{e:term1} and \eqref{e:term2} back into \eqref{e:dL}, we get
\begin{align}\begin{split}\label{e:df}
\rd \theta({\bf x}(t))&=\rd L_t-\sum_i\frac{\beta}{8n}\left(\sum_{j:j\neq i}\frac{1}{(x_i(t)+x_j(t))}\right)^2-\frac{\beta \alpha_n}{8}\left(\sum_{i}\frac{1}{x_i(t)}\right)^2\\
&+\left(\frac{\beta}{2}-1\right)\left(\frac{1}{4n}\sum_{i\neq j}\frac{1}{(x_i(t)+x_j(t))^2}+\frac{\al}{4}\sum_i \frac{1}{x_i^2(t)}\right).
\end{split}\end{align}

We recall the stopping time $\tau_\fa$ from \eqref{e:taue}, then
\begin{align*}
\langle L, L\rangle_{t\wedge \tau_\fa}
&=\int^{t\wedge \tau_\fa}_0 \sum_i \left(\frac{\sqrt \beta}{2\sqrt n}\sum_{j:j\neq i}\frac{1}{ x_i(u)+ x_j(u)}+\sqrt {\beta n}\frac{\alpha_n}{2 x_i(u)}\right)^2\rd u\\
&\leq \int^{t\wedge \tau_\fa}_0 \sum_i \left(\frac{\sqrt \beta}{2\sqrt n}\sum_{j:j\neq i}\frac{1}{ 2\fa}+\sqrt {\beta n}\frac{\alpha_n}{ 2\fa}\right)^2\rd u\leq \left(\frac{\alpha_n}{2}+\frac{1}{4}\right)^2\frac{\beta n^2 {(t\wedge \tau_\fa)}}{\fa^2},
\end{align*}
which is uniformly bounded. Therefore, Novikov's theorem \cite[H.10]{AGZ} implies the following is an exponential martingale 
\begin{align*}
e^{{L_{t\wedge \tau_\fa}-\frac{1}{2}\langle L,L\rangle_{t\wedge \tau_\fa}}}.
\end{align*}
Using \eqref{e:df}, more explicitly,
we can rewrite 
\begin{align*}
L_{t\wedge \tau_\fa}-\frac{1}{2}\langle L, L\rangle_{t\wedge \tau_\fa}
&=\left.\theta(x_1({u}), \cdots, x_n({u})\right|_0^{t\wedge \tau_\fa}- \frac{\beta n}{2}
\int_0^{t\wedge \tau_\fa}\sum_i \frac{\alpha_n^2}{4x_i^2(u)}\rd u
\\
&-\left(\frac{\beta}{2}-1\right)\int^{t\wedge \tau_\fa}\frac{1}{4n}\sum_{k\neq \ell}\frac{\rd u}{(x_k(u)+x_\ell(u))^2}
+\int^{t\wedge \tau_\fa}\frac{\alpha_n}{4}\sum_k \frac{\rd u}{x_k^2(u)}.
\end{align*}
We recall that $\bQ$ is the law of DBM \eqref{e:DBMa}, and denote the rescaled Brownian motions $M$, 
\begin{align*}
M_i(t)= x_i(t)- x_i(0)-\int_0^t\frac{1}{2n}\sum_{j:j\neq i}\frac{\rd u}{ x_i(u)- x_j(u)}=\int_0^t \frac{\rd W_i(u)}{\sqrt{\beta n}}=\frac{W_i(t)}{\sqrt{\beta n}},
\end{align*}
then Girsanov's theorem \cite[Theorem H.11]{AGZ} implies that
\begin{align}\begin{split}\label{e:newM}
M_i(t)-\langle M_i, L\rangle_{t\wedge \tau_\fa}&= x_i(t)- x_i(0)-\int_0^t\frac{1}{2n}\sum_{j:j\neq i}\frac{\rd u}{ x_i(u)- x_j(u)}\\
&-\int_0^{t\wedge \tau_\fa}\left(\frac{1}{2 n}\sum_{j:j\neq i}\frac{1}{ x_i(u)+ x_j(u)}+\frac{\alpha_n}{ 2x_i(u)}\right)\rd u,
\end{split}\end{align}
are independent Brownian motions under the measure $\bP^{\fa}$:
\begin{align*}
\bP^\fa=e^{{L_{1\wedge \tau_\fa}-\frac{1}{2}\langle L,L\rangle_{1\wedge \tau_\fa}}} \bQ,
\end{align*}
Therefore,  $\bP^\fa$ is the unique solution of the stochastic differential system
\begin{align*}
\d s_i(t) =\frac{\d W_{i}(t)}{\sqrt{\beta n}}+\frac{1}{2n}\sum_{j:j\neq i}\frac{1}{s_i(t)-s_j(t)}+\bm1(t\leq \tau_\fa)\left(\frac{1}{2n}\sum_{j:j\neq i}\frac{1}{s_i(t)+s_j(t)}+\frac{\alpha_n}{2 s_i(t)}\right)\d t.
\end{align*}
where $W_1,W_2,\cdots, W_n$ are independent Brownian motions.
\end{proof}

\section{Large deviations for the Dyson Brownian motion}\label{sec-DBM}
Thanks to Proposition \ref{p:changem}, the law of singular value Dyson  Brownian motion can be rewritten as a change of measure from the Dyson  Brownian motion.{
The large deviations principle for Dyson  Brownian motion has been proven in \cite{GZ3, guionnet2004addendum} when $\beta=1$ or $2$ and the initial condition has finite $5+\epsilon$ moment for some $\epsilon>0$. 
In this section we give a shorter proof for the large deviations principle valid for any $\beta\geq 1$ and under the assumption that the initial condition has finite second  moment  only. The main technical improvement comes from Propositions \ref{p:approximation} and  \ref{p:lowerb} which allow to prove the lower bound in  greater generality, thanks to better approximation of  our processes  by processes with smooth drifts}

We denote the empirical particle density of the Dyson  Brownian motion \eqref{e:DBMa} as
\begin{align}\label{e:empd}
\nu^n_t=\frac{1}{n}\sum_{i=1}^n \delta_{x_i(t)}.
\end{align}

\begin{assumption}\label{a:mu0}
We assume the probability density $\mu_0$  has bounded  second moment. 
Moreover, as $n$ goes to infinite, $\nu_0^n$ converges to $\mu_0$ in  $2$-Wasserstein distance, i.e. $d(\mu_0, \nu_0^n)=\oo_n(1)$.
\end{assumption}

Given a continuous measure process $\{\nu_t\}_{0\leq t\leq 1}$ with $\nu_0$ satisfying Assumption \ref{a:mu0}, we define the following dynamical entropy: 
\begin{align}\begin{split}\label{e:rateD}
S(\{\nu_t,f_t\}_{0\leq t\leq 1})&=\left\{\nu_1(f_1)-\nu_0(f_0)-\int_0^1 \int\del_t f_t(x) \rd\nu_t(x)\rd t\right.\\
&\left.-\frac{1}{2}\int_0^1\int \frac{f_t'(x)-f_t'(y)}{x-y} \rd\nu_t(x)\rd \nu_t(y)\rd t
-\frac{1}{2\beta }\int^1_0 \int (f_t'(x))^2\rd \nu_t\rd t\right\},
\end{split}\end{align}
where $f_t(x)\in \cC_b^{2,1}$ has bounded twice derivative in $x$ and bounded derivative in $t$.
For any measure $\mu_0$, if $\nu_0=\mu_0$, we set
\begin{align}\label{e:rateSmu}
S_{\mu_0}(\{\nu_t\}_{0\leq t\leq 1})=\sup_{f\in \cC^{2,1}}S(\{\nu_t,f_t\}_{0\leq t\leq 1}).
\end{align}
If $\nu_0\neq \mu_0$, we set $S_{\mu_0}(\{\nu_t\}_{0\leq t\leq 1})=\infty$.
In this section we give a new proof of the following large deviations principle for the empirical particle density of the Dyson  Brownian motion \eqref{e:empd}

\begin{theorem}\label{t:DBMLDP}
Fix a probability density $\mu_0$ and an initial condition with empirical distribution $\nu_0^n$ satisfying Assumption \ref{a:mu0}.
Then, the empirical particle density $\{\nu^n_t\}_{0\leq t\leq 1}$ of the Dyson  Brownian motion \eqref{e:empd} satisfies a large deviations principle in the scale $n^2$ and with good rate function $S_{\mu_0}(\{\nu_t\}_{0\leq t\leq 1})$. In particular
for any continuous measure process $\{\nu_t\}_{0\leq t\leq 1}$, it holds
\begin{align}
&\lim_{\delta\rightarrow 0}\limsup_{n\rightarrow\infty}\frac{1}{n^2}\log \bP(\{\nu^n_t\}_{0\leq t\leq 1}\in \bB(\{\nu_t\}_{0\leq t\leq 1}, \delta))\nonumber\\
&=\lim_{\delta\rightarrow 0}\liminf_{n\rightarrow\infty}\frac{1}{n^2}\log \bP(\{\nu^n_t\}_{0\leq t\leq 1}\in \bB(\{\nu_t\}_{0\leq t\leq 1}, \delta))
= -S_{\mu_0}(\{\nu_t\}_{0\leq t\leq 1})\,.\nonumber
\end{align}
\end{theorem}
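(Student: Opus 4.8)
The plan is to prove the large deviation principle for $\{\nu^n_t\}_{0\le t\le 1}$ by the classical Doob/exponential-martingale route used by Donsker--Varadhan and by Guionnet--Zeitouni, but organized so that only a second moment on the initial data is needed. Concretely, for $f\in\cC_b^{2,1}(\R\times[0,1])$ one applies It\^o's formula to $\nu^n_t(f_t)=\frac1n\sum_i f_t(x_i(t))$ along the Dyson Brownian motion \eqref{e:DBMa}; the martingale part has bracket of order $n^{-2}$, and the drift produces exactly the terms in $S(\{\nu_t,f_t\})$ up to an error of order $1/n$ coming from the diagonal $j=i$ terms and from $(\beta^{-1}-1)$-type corrections. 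This gives, for each fixed $f$, an exponential martingale $\mathcal M^{n,f}_t=\exp\{n^2[\,\text{(linear functional of }\nu^n)\, ]\}$ of expectation $1$, hence a change of measure and a tilted process. Upper and lower bounds then follow the standard scheme: exponential tightness of $\{\nu^n_\cdot\}$ in $\cC([0,1],\bM_1(\R))$, a weak upper bound from optimizing over a countable family of test functions $f$ via the inequality $\bP(\cdot)\le e^{-n^2 S(\{\nu_t,f_t\})+o(n^2)}$, and a matching lower bound obtained by identifying, for a given target path $\{\nu_t\}$ with $S_{\mu_0}(\{\nu_t\})<\infty$, a tilting field $f$ (equivalently a drift) under which $\{\nu_t\}$ becomes the law of large numbers limit, then invoking a local law / propagation-of-chaos estimate under the tilted measure.

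**Step order.** First I would establish exponential tightness: bound $\sup_t \nu^n_t(x^2)$ using the SDE for $\frac1n\sum_i x_i(t)^2$ (the repulsion term telescopes to a bounded quantity) together with a Gaussian deviation estimate for the driving Brownian motions, to show $\{\nu^n_\cdot\}$ lives with superexponentially high probability in a compact set of paths with uniformly bounded second moments and uniform modulus of continuity in the Wasserstein metric. Second, the It\^o computation and Novikov's criterion (the latter needs care because $\sum_i f'_t(x_i)^2/(x_i-x_j)$ is not obviously bounded, so one works with $f'$ compactly supported or truncates, then removes the truncation) give the family of exponential martingales and hence the weak large deviation upper bound with rate $\tilde S_{\mu_0}(\{\nu_t\})=\sup_{f}S(\{\nu_t,f_t\})$; combined with exponential tightness this upgrades to a full upper bound with good rate function. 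Third, for the lower bound: given $\{\nu_t\}$ with finite rate, one shows $\nu_t$ solves a weak McKean--Vlasov equation $\partial_t\nu_t+\partial_x(\nu_t(H\nu_t+ g_t))=0$ for a suitable $g_t$ built from the maximizing $f$, where $H\nu_t$ is the Hilbert transform; one then considers the Dyson Brownian motion with the extra smooth drift $g_t$, proves (this is the new input the authors advertise) that its empirical measure converges to $\nu_t$ with a quantitative rate via the coupling argument of Proposition \ref{p:lowerbound2} and the uniqueness criterion for McKean--Vlasov equations with smooth fields inspired by \cite{LLX}, and finally uses the change-of-measure formula to transfer this into the lower bound $\liminf \frac1{n^2}\log\bP(\cdot\in\bB(\{\nu_t\},\delta))\ge -S(\{\nu_t,f_t\})-o_\delta(1)$, optimizing over $f$ at the end.

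**Main obstacle.** The hard part is the lower bound, specifically matching the a priori guess $\sup_f S(\{\nu_t,f_t\})$ (from the upper bound) with the value actually achieved by a genuine tilted dynamics. This requires (i) showing that any path of finite entropy is an admissible weak solution of the forced McKean--Vlasov equation with an $L^2(\nu_t\,dt)$ force — essentially a Riesz-representation argument on the quadratic form defining $S$ — and (ii) proving that the forced Dyson Brownian motion's empirical measure really concentrates on that specific solution, which is where uniqueness of the McKean--Vlasov equation with smooth but singular ($1/x$-type, in the Bessel case) drift and the quantitative coupling estimate enter. A secondary but real technical point is handling the regularization: the logarithmic/Coulomb singularity of the drift in \eqref{e:DBMa} forces one to work first with smooth, bounded, compactly supported modifications of the test function $f$ and of the drift, carry through all estimates there, and then pass to the limit while controlling the contribution of particles near coincidence and (for the later Bessel application) near the origin — the latter being exactly why the stopping time $\tau_\fa$ and the $\bP^\fa$ truncation of Proposition \ref{p:changem} are introduced.
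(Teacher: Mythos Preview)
Your outline follows the same architecture as the paper's proof: It\^o/exponential martingale for the upper bound, exponential tightness, and a tilting argument for the lower bound using the coupling estimate of Proposition~\ref{p:lowerbound2}. Two points deserve correction or sharpening.

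First, a minor confusion: there is no issue with Novikov's criterion for the upper bound. The bracket of $L^f_t$ is $\frac{1}{\beta n}\int_0^t\sum_i (f_s'(x_i(s)))^2\,\rd s$, which for $f\in\cC^{2,1}_b$ is uniformly bounded; the singular $1/(x_i-x_j)$ terms live only in the drift, not in the quadratic variation. So the exponential martingale is honest with no truncation needed.

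Second, and more substantively: your lower bound sketch writes ``consider the Dyson Brownian motion with the extra \emph{smooth} drift $g_t$'', but the optimizing field $\partial_x k_t$ attached to a general finite-entropy path $\{\nu_t\}$ is only in $L^2(\rd\nu_t\,\rd t)$, not smooth or even bounded. The coupling lemma (Proposition~\ref{p:lowerbound2}) requires a uniformly Lipschitz drift, so you cannot tilt directly. The paper's actual work for the lower bound is Proposition~\ref{p:approximation}: it approximates the \emph{path} $\{\nu_t\}$ itself (not merely the test function) by paths $\{\nu^\varepsilon_t\}$ whose drift $\partial_x k^\varepsilon_t$ is smooth, via a three-step construction---free convolution with a small semicircle (to bound the density), truncation to compact support, and mollification averaged with an explicit edge profile so that the drift stays smooth up to and across the boundary of the support---while showing $S_{\nu^\varepsilon_0}(\{\nu^\varepsilon_t\})\to S_{\mu_0}(\{\nu_t\})$. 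Your ``regularization of $f$'' paragraph gestures at this but does not identify that the regularization must happen at the level of the measure path and must control the edge; this is precisely where earlier proofs (e.g.\ \cite{cabanal2003discussions}) were weaker and where the paper's improvement lies. Without this step, Proposition~\ref{p:lowerbound2} cannot be invoked and the lower bound does not close.
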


For any measure valued proces $\{\nu_{t}\}_{0\leq t\leq 1}$ such that  $S_{\mu_0}(\{\nu_t\}_{0\leq t\leq 1})<\infty$, by Riesz representation theorem, there exists a measurable function $\del_x k_t\in L^{2}(\d\nu_{t}(x)\d t)$, such that for any $f\in \cC_b^{2,1}$
\begin{align}\label{e:f1f0}
\nu_1(f_1)-\nu_0(f_0)-\int \del_t f_t(x) \rd\nu_t(x)\rd t-\frac{1}{2}\int_0^1\int f_t'(x) H(\nu_t)\rd \nu_t(x)\rd t=\int_0^1\int f'_t(x) \del_x k_t(x)\rd \nu_t \rd t.
\end{align}
Here $H(\nu)$ denotes the Hilbert transform of $\nu$. 
Then we can rewrite the rate function $S_{\mu_0}(\{\nu_t\}_{0\leq t\leq 1})$ in \eqref{e:rateD} as 
\begin{align}\label{e:minimizereq}
S_{\mu_0}(\{\nu_t\}_{0\leq t\leq 1})=\sup_{f\in \cC_b^{2,1}} \int_0^1 f'_t(x) \del_x k_t(x)\rd \nu_t \rd t-\frac{1}{2\beta}\int^1_0 \int (f_t'(x))^2\rd \nu_t\rd t=\frac{\beta}{2}\int_0^1 \int (\del_x k_t(x))^2\rd \nu_t\rd t,
\end{align}
where the equality is achieved when $f'_t(x)=\beta \del_x k_t(x)$.

We collect some properties of the rate function \eqref{e:rateD}, which were essentially proven in \cite{GZ3, MR2034487,guionnet2004addendum}.
\begin{proposition}\label{p:rate}
Fix a probability measure $\mu_0$ with finite second moment and bounded free entropy, i.e. $\Sigma(\mu_0)>-\infty$. Then,  $S_{\mu_0}$ is a good rate function on $\cC([0,1], \bM_1(\bR))$. If $S_{\mu_0}(\{\nu_t\}_{0\leq t\leq 1})<0$, then we have
\begin{enumerate}
\item There exists a constant $\fC$ depending only on $\mu_0$ and $S_{\mu_0}(\{\nu_t\}_{0\leq t\leq 1})$, such that 
the $L_2$ norms of $\nu_t$ are uniformly bounded,
\begin{align}\label{e:L2norm}
\int x^2 \rd \nu_t(x)\leq \fC.
\end{align}
\item $\nu_t$ has a density for almost surely all $0\leq t\leq 1$, i.e. 
\begin{align*}
\frac{\rd \nu_t(x)}{\rd x}=\rho_t(x).
\end{align*}
\item We denote the velocity field $u_t(x)=H(\nu_t)(x)/2+\del_x k_t(x)$,
then it satisfies the conservation of mass equation 
\begin{align}\label{e:masseq}
\del_t\rho_t+\del_x(\rho_t u_t)=0,\quad 0\leq t\leq 1,
\end{align}
in the sense of distribution.
We can rewrite the dynamical entropy \eqref{e:rateD} as
\begin{align}\begin{split}\label{Sent}
S_{\mu_0}(\{\nu_t\}_{0\leq t\leq 1})&=\frac{\beta}{2}\left(\int_0^1 \int (u_t^2+H(\nu_t)^2/4)\rho_t(x)\rd x\rd t-\frac{1}{2}(\Sigma(\nu_1)-\Sigma(\nu_0))\right)\\
&=\frac{\beta}{2}\left(\int_0^1 \int (u_t^2+\frac{\pi^2}{12}\rho_t(x)^2)\rho_t(x)\rd x\rd t-\frac{1}{2}(\Sigma(\nu_1)-\Sigma(\nu_0))\right).
\end{split}\end{align}
\end{enumerate} 
\end{proposition}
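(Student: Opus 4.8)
The plan is to establish the $\beta\ge1$ version of the proposition along the lines of \cite{GZ3,guionnet2004addendum,MR2034487}. The starting observation is that $S_{\mu_0}$ in \eqref{e:rateSmu} is a supremum, over $f\in\cC_b^{2,1}$, of functionals $\{\nu_t\}_{0\le t\le1}\mapsto S(\{\nu_t,f_t\}_{0\le t\le1})$ that are continuous on $\cC([0,1],\bM_1(\bR))$: the affine part of \eqref{e:rateD} is plainly continuous, and in the two remaining terms the integrands $(f_t'(x)-f_t'(y))/(x-y)$ and $(f_t'(x))^2$ are bounded and continuous because $f_t'$ is bounded and Lipschitz. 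Hence $S_{\mu_0}$ is lower semicontinuous, and nonnegative (take $f\equiv0$), so it is a rate function. I would then write $S(\{\nu_t,f_t\})=\ell(\{f_t\})-\tfrac1{2\beta}\|f_t'\|_{L^2(\rd\nu_t(x)\rd t)}^2$ with $\ell$ linear in $f$; finiteness of $S_{\mu_0}(\{\nu_t\})$ is equivalent to boundedness of $\ell$ for the seminorm $\|f_t'\|_{L^2(\rd\nu_t(x)\rd t)}$, and then $S_{\mu_0}(\{\nu_t\})=\tfrac\beta2\sup_f\ell(\{f_t\})^2/\|f_t'\|^2$. By the Riesz representation theorem there is $\del_x k_t\in L^2(\rd\nu_t(x)\rd t)$ satisfying \eqref{e:f1f0}, and the optimal choice $f_t'=\beta\,\del_x k_t$ gives \eqref{e:minimizereq}.

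\textbf{The uniform second moment (item 1).} Here I would apply the version of \eqref{e:f1f0} on $[0,t]$ to a suitable smooth truncation $f^{(M)}$ of $x\mapsto x^2$ --- chosen so that $f^{(M)}\uparrow x^2$ pointwise, $f^{(M)}\le x^2$, $\|(f^{(M)})''\|_\infty$ is bounded uniformly in $M$, and $|(f^{(M)})'|^2\le4f^{(M)}$. The $H(\nu_t)$-term of \eqref{e:f1f0} is then bounded in absolute value by $\tfrac14\int_0^t\|(f^{(M)})''\|_\infty\rd s$, while Cauchy--Schwarz together with $\int_0^1\!\int(\del_x k_t)^2\rd\nu_t\rd t=\tfrac2\beta S_{\mu_0}(\{\nu_t\})$ bounds the $\del_x k_t$-term by $2\bigl(\int_0^t\nu_s(f^{(M)})\rd s\bigr)^{1/2}\bigl(\tfrac2\beta S_{\mu_0}(\{\nu_t\})\bigr)^{1/2}$. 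This gives $\nu_t(f^{(M)})\le\nu_0(x^2)+C+\sqrt{8S_{\mu_0}(\{\nu_t\})/\beta}\,\bigl(\int_0^t\nu_s(f^{(M)})\rd s\bigr)^{1/2}$, and the arithmetic--geometric inequality followed by Gr\"onwall's lemma bounds $\nu_t(f^{(M)})$, uniformly in $M$ and in $t\in[0,1]$, by a constant $\fC$ depending only on $\nu_0(x^2)$ and $S_{\mu_0}(\{\nu_t\})$; letting $M\to\infty$ by monotone convergence yields \eqref{e:L2norm}.

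\textbf{Density and the mass-conservation equation (items 2 and 3).} Taking $f_t\equiv f$ time-independent in \eqref{e:f1f0} shows $t\mapsto\nu_t(f)$ is absolutely continuous, with $\del_t\nu_t(f)=\tfrac12\int f'(x)H(\nu_t)(x)\rd\nu_t(x)+\int f'(x)\del_x k_t(x)\rd\nu_t(x)$. Applying this with $f$ the real and imaginary parts of $\log(x-z)$, $z\in\CC\setminus\RR$, produces a forced complex-Burgers-type equation for the Stieltjes transform $G_t(z)=\int(x-z)^{-1}\rd\nu_t(x)$, with forcing $r_t(z)=\int(x-z)^{-2}\del_x k_t(x)\rd\nu_t(x)$ obeying $\int_0^1|r_t(z)|\rd t\le|\Im z|^{-2}\sqrt{2S_{\mu_0}(\{\nu_t\})/\beta}$; the argument of \cite{GZ3} (see also \cite{MR2034487,guionnet2004addendum}) then forces $\nu_t$ to have a density $\rho_t$ for almost every $t$, with $\int_0^1\!\int\rho_t(x)^3\rd x\rd t<\infty$. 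Given the density, $H(\nu_t)\in L^2(\rd\nu_t)$ for a.e.\ $t$ since $\int H(\nu_t)^2\rd\nu_t=\tfrac{\pi^2}3\int\rho_t^3$, so $u_t=\tfrac12H(\nu_t)+\del_x k_t\in L^2(\rd\nu_t(x)\rd t)$ and \eqref{e:f1f0} rearranges to $\nu_1(f_1)-\nu_0(f_0)-\int_0^1\nu_t(\del_t f_t)\rd t=\int_0^1\!\int f_t'(x)u_t(x)\rd\nu_t(x)\rd t$ for all $f\in\cC_b^{2,1}$, which is the weak form of \eqref{e:masseq}. Finally, \eqref{e:minimizereq} gives $S_{\mu_0}(\{\nu_t\})=\tfrac\beta2\int_0^1\!\int(\del_x k_t)^2\rd\nu_t\rd t$, and substituting $\del_x k_t=u_t-\tfrac12H(\nu_t)$, using the identity $\tfrac{\rd}{\rd t}\Sigma(\nu_t)=2\int u_t(x)H(\nu_t)(x)\rho_t(x)\rd x$ (a consequence of \eqref{e:masseq} and $\del_x\log|x-y|=(x-y)^{-1}$) together with $\int H(\nu_t)(x)^2\rho_t(x)\rd x=\tfrac{\pi^2}3\int\rho_t(x)^3\rd x$, yields \eqref{Sent}.

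\textbf{Goodness of the rate function, and the main obstacle.} To finish I would check that $\{S_{\mu_0}\le L\}$ is compact in $\cC([0,1],\bM_1(\bR))$: it is closed by the lower semicontinuity above; each of its elements satisfies $\sup_t\nu_t(x^2)\le\fC(L)$ by item 1, so for each $t$ the set of admissible $\nu_t$'s is tight; and it is equicontinuous in $t$, because for $f\in\cC_b^2$ with $\|f'\|_\infty\vee\|f''\|_\infty\le1$ the formula for $\del_t\nu_t(f)$ above and Cauchy--Schwarz give $|\nu_t(f)-\nu_s(f)|\le C|t-s|+\sqrt{2L/\beta}\,|t-s|^{1/2}$, which after a routine mollification becomes a uniform modulus of continuity for a metric generating the weak topology on tight sets. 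Arzel\`a--Ascoli then yields relative compactness, and closedness upgrades this to compactness. I expect the only genuinely delicate step to be the one in the third paragraph --- passing from mere finiteness of $S_{\mu_0}(\{\nu_t\})$ to honest regularity of the $\nu_t$ (a density in $L^3$) through the forced Burgers equation --- which is why I would quote \cite{GZ3} for it; the rest is either soft (lower semicontinuity, Riesz representation) or a routine Gr\"onwall / Arzel\`a--Ascoli estimate.
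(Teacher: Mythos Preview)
Your proposal follows essentially the same route as the paper: lower semicontinuity from the sup-of-continuous representation, Riesz to get $\del_x k_t\in L^2$ and \eqref{e:minimizereq}, a truncated quadratic plus Gr\"onwall for item~(i), the cited earlier work for the density, and Arzel\`a--Ascoli for goodness. The paper's argument for item~(i) is marginally more direct --- it plugs $f_\varepsilon(x)=x^2/(1+\varepsilon x^2)$ straight into the variational definition \eqref{e:rateD} rather than going through the Riesz representation and Cauchy--Schwarz --- but the two are interchangeable.

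There is one point where the paper does more than you indicate, and it is precisely the one you flagged as delicate. The results you cite from \cite{GZ3,MR2034487,guionnet2004addendum} for the existence of a density and the formula \eqref{Sent} were proved there under a $5+\varepsilon$ moment assumption on $\mu_0$, not merely a second moment. The paper bridges this gap by free-convolving the whole path with a small semicircle $\sigma_\epsilon$: since free convolution does not increase $S_{\mu_0}$ and since $H(\nu_t\boxplus\sigma_\epsilon)$ is uniformly bounded (by $1/\sqrt\epsilon$) and continuous in $t$ by Biane's formulas, the expansion $S=\frac\beta2\int(u_t-\frac12 H\nu_t)^2$ and the identity $\int_0^1\!\int u_t H(\nu_t)\rd\nu_t\rd t=\frac12(\Sigma(\nu_1)-\Sigma(\nu_0))$ can be justified rigorously for $\nu_t^\epsilon=\nu_t\boxplus\sigma_\epsilon$; one then lets $\epsilon\downarrow0$. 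Your assertion that $\tfrac{\rd}{\rd t}\Sigma(\nu_t)=2\int u_t H(\nu_t)\rho_t\,\rd x$ follows directly from \eqref{e:masseq} is exactly the step that needs this regularization, because $\log|x-y|$ is not an admissible $\cC_b^{2,1}$ test function in the weak mass equation. So the missing ingredient in your outline is the semicircle smoothing, not the Burgers-equation analysis itself.
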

\begin{proof}

It is proven in \cite[Theorem 1.4]{GZ3} that $S_{\mu_0}(\{\nu_t\}_{0\leq t\leq 1})$ is a good rate function. 
 If  $S_{\mu_0}(\{\nu_t\}_{0\leq t\leq 1})<\infty$, by definition we have $\mu_0=\nu_0$.
 For Item (i), we take a test function 
$f_\varepsilon(x)=x^2/(1+\varepsilon x^2)$ with small $\varepsilon>0$. Then it is easy to see that $f_{\varepsilon}'(x)=2x/(1+\varepsilon x^2)^2$ and $|f_{\varepsilon}''(x)|\leq 10$. By the definition of the dynamical free entropy \eqref{e:rateD}, for any $0<t\leq 1$, we have
\begin{align}\begin{split}\label{e:bbd}
&\phantom{{}={}}\nu_t(f_{\varepsilon})-\nu_0(f_{\varepsilon})-\frac{1}{4}\int_0^t \int\frac{f_{\varepsilon}'(x)-f_{\varepsilon}'(y)}{y-x}  \rd \nu_s(y)\rd \nu_s(x)\rd s-\frac{1}{2\beta}\int_0^t\int (f_{\varepsilon}'(x))^2\rd \nu_s \rd s\\
&\leq S_{\mu_0}(\{\nu_s\}_{0\leq s\leq 1})<0.
\end{split}\end{align}
By our assumption that $\nu_0=\mu_0$ has finite second moment, it holds that $\sup_\varepsilon\nu_0(f_{\varepsilon})<\infty$. Using $|f_{\varepsilon}''(x)|\leq 10$, we find for $t\le 1$, 
\begin{align*}
\left|\frac{1}{4}\int_0^t \int\frac{f_{\varepsilon}'(x)-f_{\varepsilon}'(y)}{y-x}  \rd \nu_s(y)\rd \nu_s(x)\rd s\right|\leq 5/2.
\end{align*}
Therefore, there exists a constant $\fC$ depending only on $\mu_0$ and $S_{\mu_0}(\{\nu_t\}_{0\leq t\leq 1})$,  such that
\begin{align*}
\nu_t(f_{\varepsilon})&=\int \frac{x^2}{1+\varepsilon x^2}\rd \nu_t\leq \fC+\frac{1}{2\beta}\int_0^t\int (f_{\varepsilon}'(x))^2\rd \nu_s \rd s\\
&\leq \fC +\frac{2}{\beta} \int_0^t \int \frac{x^2}{1+\varepsilon x^2}\rd \nu_s\rd s
=\fC+\frac{2}{\beta}\int_0^t \nu_s(f_{\varepsilon})\rd s.
\end{align*}
Gr{\"o}nwall's inequality then  implies that for all $t\le 1$
\begin{align*}
\nu_t(f_{\varepsilon})\le e^{2/\beta} \fC.
\end{align*}
The claim \ref{e:L2norm} follows by sending $\varepsilon$ to $0$ { and monotone convergence theorem}.

It was proven in \cite[Theorem 2.1]{MR2034487} and \cite[Theorem 3.3]{guionnet2004addendum} that if $\mu_0=\nu_0$ has bounded $5+\varepsilon$ moments, i.e.
\begin{align}\label{e:five}
\int |x|^{5+\varepsilon}\rd \nu_0<\infty, 
\end{align}
and $\Sigma(\mu_{0}),\Sigma(\mu_{1})$ are  finite,
then Item (ii) and (iii) hold. This can be extended to the case where $\mu_{0}$ has only a finite second moment following the  arguments of the proof of \cite[Lemma 5.9]{cabanal2003discussions}. We briefly recall the main steps of  the proof. First recall that free convolution reduces the dynamical entropy
(see  \cite{cabanal2003discussions}) so  that if $\sigma_{\epsilon}$ denotes the semi-circle law with covariance $\epsilon$
\begin{align*}
S_{\mu_{0}\boxplus\sigma_\epsilon}(\{\nu_{t}\boxplus\sigma_{\epsilon}\}_{0\leq t\leq 1})\leq S_{\mu_{0}}(\{\nu_t\}_{0\leq t\leq 1}).
\end{align*}
But on the other hand, $H(\nu_{t}\boxplus \sigma_{\epsilon})$ is uniformly bounded by $1/\sqrt{\epsilon}$. Therefore if we denote by $u^{\epsilon}$ the velocity field of $\nu_t^{\epsilon}=\nu_{t}\boxplus\sigma_{\epsilon}$,

$$\int_{0}^{1}\int (u^{\epsilon}_{t})^{2}\d\nu_t^{\epsilon} \d t \le 2\int_{0}^{1}\int (u^{\epsilon}_{t}-H\nu_{t}^{\epsilon})^{2}\d\nu_{t}^{\epsilon} \d t +2\int_{0}^{1}\int (H\nu_{t}^{\epsilon})^{2}\d\nu_{t}^{\epsilon} \d t 
\le \frac{4}{\beta}S_{\mu_{0}\boxplus\sigma_\epsilon}(\{\nu_{t}^{\epsilon}\}_{0\leq t\leq 1})+\frac{2}{\epsilon}<\infty\,.$$
Hence, we can write
\begin{equation}\label{lk}S_{\mu_{0}\boxplus\sigma_\epsilon}(\{\nu_{t}^{\epsilon}\}_{0\leq t\leq 1})=\frac{\beta}{2} \int_{0}^{1}\int (u^{\epsilon}_{t})^{2}\d\nu_t^{\epsilon} \d t+\frac{\beta}{2}\int_{0}^{1}\int (H\nu^{\epsilon}_{t})^{2}\d\nu_t^{\epsilon} \d t -\beta \int_{0}^{1}\int H\nu^{\epsilon}_{t }u^{\epsilon}_{t}\d\nu_t^{\epsilon} \d t\,.\end{equation}
For the  second term we used the  well known formula (recall that $d\nu^{\epsilon}_{t}\ll \d x$)
$$\int_{0}^{1}\int (H\nu^{\epsilon}_{t})^{2}\d\nu_t^{\epsilon} =\frac{\pi^{2}}{3}\int_{0}^{1}\int (\frac{d\nu^{\epsilon}_{t}}{\d x})^{3}\d x\,.$$
Finally for the last term of \eqref{lk}, we observe  following \cite[Lemma 5.9]{cabanal2003discussions} that the continuity of $t\mapsto \nu_{t}$ implies that $t\mapsto H\nu^{\epsilon}_{t}$ is continuous (thanks to the explicit formulas for the Hilbert transform of measures freely convoluted with the semi-circle laws given by Biane \cite{Bi97}). Since it is bounded  and $u^{\epsilon}$ is in $L^{2}$, we see that we can approximate the last term by Riemann sum. Then, 
recall that by definition we have
$$\int^{t }u^{\epsilon}_{s}\frac{\d \nu_{s}^{\epsilon}}{\d x} \d s=-\int^{x} \d \nu^{\epsilon}_{t},$$ to conclude that
$$ \int_{0}^{1}\int H\nu^{\epsilon}_{t }u^{\epsilon}_{t}\d\nu_t^{\epsilon} \d t=\frac{1}{2}\int_{0}^{1}\partial_{t}\Sigma(\nu^{\epsilon}_{t}) \d t=\frac{1}{2}\left(\Sigma(\nu_{1}\boxplus\sigma_{\epsilon})-\Sigma(\mu_{0}\boxplus \sigma_{\epsilon})\right)\,.$$
Hence, \eqref{Sent} holds for $\{\nu^{\epsilon}_t\}_{0\leq t\leq 1}$. This implies that $\Sigma(\nu_{1}\boxplus\sigma_{\epsilon})$  is bounded since it  is bounded from above  as 
$\nu_{1}\boxplus\sigma_{\epsilon}$  has bounded second moment and also from below since
$$\frac{\beta}{2} \int_{0}^{1}\int (u^{\epsilon}_{t})^{2}\d\nu_t^{\epsilon} \d t+\frac{\beta}{2}\int_{0}^{1}\int (H\nu^{\epsilon}_{t})^{2}\d\nu_t^{\epsilon} \d t -
\frac{\beta }{2}(\Sigma(\nu_1\boxplus\sigma_{\epsilon})-\Sigma(\nu_0))\le
S_{\mu_0\boxplus\sigma_{\epsilon}}(\{\nu_t^{\epsilon}\}_{0\leq t\leq 1})\le S_{\mu_0}(\{\nu_t\}_{0\leq t\leq 1})\,.$$
In fact, because we could have done the same reasoning on the time interval $[0,t]$, we also see that
 for all $s\le 1$
$$S_{\mu_0}(\{\nu_t\}_{0\leq t\leq 1})\ge \frac{\beta}{2} \int_{0}^{s}\int (u^{\epsilon}_{t})^{2}\d\nu_t^{\epsilon} \d t+\frac{\beta}{2}\int_{0}^{s}\int (H\nu^{\epsilon}_{t})^{2}\d\nu_t^{\epsilon} \d t -
\frac{\beta }{2}(\Sigma(\nu_s\boxplus\sigma_{\epsilon})-\Sigma(\nu_0)),$$
which implies that  $\Sigma(\nu_s\boxplus\sigma_{\epsilon})$ is  uniformly bounded.  We can finally let $\epsilon$ going to zero to conclude.  As a consequence of \eqref{e:L2norm}, we deduce that $\nu_t$ has finite free entropy, i.e. $\Sigma(\nu_t)<+\infty$. We refer the reader to  \cite{cabanal2003discussions} for details.

\end{proof}

\subsection{Large deviations upper bound}
In this section, we prove the large deviations upper bound. We recall that the exponential tightness was already proven in this setting in the proof of \cite[Theorem 2.4]{GZ3}: for the sake of completeness we will recall this proof but in the new setting of the Bessel Dyson processes, see section \ref{secbessel}. We next  prove the large deviations upper bound of Theorem \ref{t:DBMLDP}
\begin{align}\label{e:DBMupbb}
&\limsup_{\delta\rightarrow 0}\limsup_{n\rightarrow\infty}\frac{1}{n^2}\log \bP(\{\nu^n_t\}_{0\leq t\leq 1}\in \bB(\{\nu_t\}_{0\leq t\leq 1}, \delta))
\leq -S_{\mu_0}(\{\nu_t\}_{0\leq t\leq 1}).
\end{align}

Take any test function $f_t(x)\in \cC_b^{2,1}([0,1]\times \bR)$, and use It{\^o}'s lemma to find that
\begin{align}\begin{split}\label{e:lin0}
&\rd\sum_{i}f_t(x_i(t))
=\sum_{i}f'_t(x_i(t))\rd x_i(t)+\sum_{i}\left( \frac{f''_t(x_i(t))}{2\beta n}+\del_t f_t(x_i(t))\right)\rd t\\&=
\sum_{i}\left( \frac{f''_t(x_i(t))}{2\beta n}+\del_t f_t(x_i(t))
+
\frac{f'_t(x_i(t))}{2n}\sum_{j: j\neq i}\frac{1}{x_i(t)-x_j(t)}\right)\d t+\sum_{i}f'_t(x_i(t))\frac{\rd W_i(t)}{\sqrt {\beta n}}
\\
&=\rd L^f_t+
\frac{1}{4n}\sum_{i \neq j}\frac{f'_t(x_i(t))-f'_t(x_j(t))}{x_i(t)-x_j(t)}\d t
+\sum_i\frac{f''_t(x_i(t))}{2\beta n}\rd t+\sum_i\del_t f_t(x_i(t))\rd t,
\end{split}\end{align}
where the martingale term is given by
\begin{align}\label{e:MLt0}
\rd L^f_t=\sum_{i}f'_t(x_i(t))\frac{\rd W_i(t)}{\sqrt {\beta n}},\quad
\langle L^f, L^f\rangle_t=\frac{1}{\beta n}\int^t_0 \sum_i(f_t'(x_i(t)))^2\rd t.
\end{align}
We recall the empirical particle density $\{\nu^n_t\}_{0\leq t\leq 1}$ from \eqref{e:empd}. With it, we can rewrite \eqref{e:lin0} as
\begin{align}\begin{split}\label{e:df220}
&\phantom{{}={}}\int f_t(x)\rd \nu^n_t-\int f_t(x)\rd \nu^n_0\\
&=\frac{ L^f_t}{n}
+\frac{1}{4}\int_0^t \int_{x\neq y} \frac{f_s'(x)-f_s'(y)}{x-y}\rd \nu^n_s(x)\rd\nu^n_s(y)\rd s
+ \frac{1}{2\beta n}\int_0^t \int f''_s(x)\rd \nu^n_s(x)\rd s+\int_0^t \int \del_s f_s(x) \rd\nu^n_s(x)\rd s\\
&=\frac{ L^f_t}{n}
+\int_0^t\left( \frac{1}{4}\int \frac{f_s'(x)-f_s'(y)}{x-y}\rd \nu^n_s(x)\rd\nu^n_s(y)
+\int \del_s f_s(x) \rd\nu^n_s(x)+\frac{1}{n}\left(\frac{1}{2\beta }-\frac{1}{4}\right)\int f''_t(x)\rd \nu^n_s(x)\right)\rd s.
\end{split}\end{align}
As $L^{f}$ is bounded uniformly for $f\in C^{2,1}_{b}$, we can construct an exponential martingale using the martingale $\rd L^f_t$ from \eqref{e:MLt0}
\begin{align}\label{e:Dt0}
D_t=e^{n L^f_t-\frac{n^2}{2}\langle L^f,L^f\rangle_t}, \quad \bE[D_t]=\bE[D_0]=1.
\end{align}
Using \eqref{e:df220}
we can rewrite 
$$nL^f_t-\frac{n^2}{2}\langle L^f, L^f\rangle_t
= n^2 S^n_t(\{\nu_t^n, f_t\}_{0\leq t\leq 1}), $$
where 
\begin{align*}
S^n_t(\{\nu^n_s, f_s\}_{0\leq s\leq t})
&=\int f_t\rd \nu^n_t-\int f_0\rd \nu^n_0-\frac{1}{4}\int_0^t\int \frac{f_s'(x)-f_s'(y)}{x-y}\rd \nu^n_s(x)\rd\nu^n_s(y)\rd s
-\int_0^t \int \del_s f_s(x) \rd\nu^n_s(x)\rd s\\
&-\int_0^t\int 
\frac{1}{n}\left(\frac{1}{2\beta }+\frac{1}{4}\right) f''_t(x)\rd \nu^n_s\rd s
-\frac{1}{2\beta }\int^t_0 \int (f_t'(x))^2\rd \nu^n_s\rd s.
\end{align*}
We also define
\begin{align}\label{e:defSn}
S^n(\{\nu_t^n, f_t\}_{0\leq t\leq 1})
&=\frac{1}{n^2}\left(\frac{n L^f_1}{2}-\frac{n^2}{2}\langle L^f, L^f\rangle_1\right)=S^n_1(\{\nu_t^n, f_t\}_{0\leq t\leq 1}) .
\end{align}
Then for $\{\nu^n_t\}_{0\leq t\leq 1}\in \bB(\{\nu_t\}_{0\leq t\leq 1}, \delta)$, we have by uniform (in $n\ge 1$) continuity of $\nu\mapsto S^n(\{\nu_t, f_t\}_{0\leq t\leq 1})$ for any $f\in \cC_b^{2,1}([0,1]\times \bR)$, 

\begin{align*}
S^n(\{\nu_t^n, f_t\}_{0\leq t\leq 1})
=S^n(\{\nu_t, f_t\}_{0\leq t\leq 1})+\oo_n(1). 
\end{align*}
We can use the exponential martingale \eqref{e:Dt0} to obtain the large deviations upper bound as follows.
\begin{align}\begin{split}\label{e:upp}
&\phantom{{}={}}\bP(\{\nu^n_t\}_{0\leq t\leq 1}\in \bB(\{\nu_t\}_{0\leq t\leq 1}, \delta))
=\bE\left[\bm1(\{\nu^n_t\}_{0\leq t\leq 1}\in \bB(\{\nu_t\}_{0\leq t\leq 1}, \delta))\frac{e^{n^2S^n(\{\nu_t^n, f_t\}_{0\leq t\leq 1})}}{e^{n^2S^n(\{\nu_t^n, f_t\}_{0\leq t\leq 1})}}\right]\\
&=\bE\left[\bm1(\{\nu^n_s\}_{0\leq t\leq 1}\in \bB(\{\nu_s\}_{0\leq t\leq 1}, \delta))e^{n^2S^n(\{\nu_t^n, f_t\}_{0\leq t\leq 1})}\right]\frac{e^{\oo(n^2)}}{e^{n^2S^n(\{\nu_t, f_t\}_{0\leq t\leq 1})}}\\
&\leq \bE\left[e^{n^2S^n(\{\nu_t^n, f_t\}_{0\leq t\leq 1})}\right]\frac{e^{\oo(n^2)}}{e^{n^2S^n(\{\nu_t, f_t\}_{0\leq t\leq 1})}}=e^{-n^2S^n(\{\nu_t, f_t\}_{0\leq t\leq 1})+\oo_n(1))}.
\end{split}\end{align}
The large deviations upper bound \eqref{e:DBMupbb} follows from rearranging \eqref{e:upp}, and taking the  infimum over $f\in \cC^{2,1}_b$.

\subsection{Large deviations Lower Bound}
In the rest of this section, we prove the large deviations lower bound of Theorem \ref{t:DBMLDP},
namely we show that for any continuous measure-valued process $\{\nu_t\}_{0\leq t\leq 1}$, we have

\begin{align}\label{e:DBMlow}
&\liminf_{\delta\rightarrow 0}\liminf_{n\rightarrow\infty}\frac{1}{n^2}\log \bP(\{\nu^n_t\}_{0\leq t\leq 1}\in \bB(\{\nu_t\}_{0\leq t\leq 1}, \delta))
\geq -S_{\mu_0}(\{\nu_t\}_{0\leq t\leq 1}).
\end{align}
The proof itself will be used to derive the large deviations for Dyson  Bessel processes as it allows to control the positions of the extreme particules, see Proposition \ref{p:lowerbound2}, key to control the singularity at the origin of the Dyson Bessel process. 

The proof consists of two steps, in the first step we approximate $\{\nu_t\}_{0\leq t\leq 1}$ by a sequence of measure-valued process with benign properties.
\begin{proposition}\label{p:approximation}
Fix a probability measure $\mu_0$ with finite second moment.
Then, any  measure-valued process $\{\nu_t\}_{0\leq t\leq 1}$ with $S_{\mu_0}(\{\nu_t\}_{0\leq t\leq 1})<\infty$
can be approximated by a sequence of measure-valued processes $\{\nu^\varepsilon_t\}_{0\leq t\leq 1}$ satisfying
\begin{itemize}
\item $\nu_t^\varepsilon$ has uniformly bounded density $\rho_t^\varepsilon$, $\supp(\nu_t^\varepsilon)$ is a single interval for all times $t\in [0,1]$, and
\begin{align*}
\lim_{\varepsilon\rightarrow 0} \sup_{0\leq t\leq 1}d(\nu_t, \nu^\varepsilon_t )=0.
\end{align*}
\item The dynamical entropy satisfies
\begin{align}\label{convS}
\lim_{\varepsilon\rightarrow 0}S_{\nu_0^\varepsilon}(\{\nu_t^\varepsilon\}_{0\leq t\leq 1})= S_{\mu_0}(\{\nu_t\}_{0\leq t\leq 1}).
\end{align}
\item The density  $\{\rho^\varepsilon_t(x)\}_{0\leq t\leq 1}$ of the measure-valued process $\{\nu^\varepsilon_t\}_{0\leq t\leq 1}$ is smooth in both $x,t$, and the corresponding drift $\del_x k^{\varepsilon}_t(x)$ as defined by
\begin{align*}
\del_t\rho^{\varepsilon}_t+\del_x(\rho^{\varepsilon}_tu^{\varepsilon}_t)=0,\quad 
u_t^\varepsilon(x) = \frac{1}{2} H(\nu_t^\varepsilon)(x) +\del_xk_t^\varepsilon (x),
\end{align*}
is also smooth in both $x,t$.

\end{itemize}
\end{proposition}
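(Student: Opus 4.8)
The plan is to build the approximating processes in several successive regularization steps, each one preserving (in the limit) the dynamical entropy, and to exploit the fact that by Proposition~\ref{p:rate} the process $\{\nu_t\}_{0\leq t\leq 1}$ with $S_{\mu_0}(\{\nu_t\}_{0\leq t\leq 1})<\infty$ already has: a density $\rho_t$ for a.e.\ $t$, uniformly bounded second moments, finite free entropy $\Sigma(\nu_t)$, and a velocity field $u_t = H(\nu_t)/2 + \partial_x k_t$ with $\int_0^1\int u_t^2\,\rd\nu_t\,\rd t<\infty$ solving the continuity equation $\partial_t\rho_t+\partial_x(\rho_t u_t)=0$. The representation \eqref{Sent}, $S_{\mu_0}(\{\nu_t\})=\frac{\beta}{2}\int_0^1\int(u_t^2+\frac{\pi^2}{12}\rho_t^2)\rho_t\,\rd x\,\rd t-\frac{\beta}{4}(\Sigma(\nu_1)-\Sigma(\nu_0))$, is the functional I want to track through each step.

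\emph{Step 1: free convolution with a small semicircle.} Set $\nu_t^\eps = \nu_t\boxplus\sigma_\eps$. As recalled in the proof of Proposition~\ref{p:rate}, free convolution does not increase the dynamical entropy and $S_{\mu_0\boxplus\sigma_\eps}(\{\nu_t^\eps\})\to S_{\mu_0}(\{\nu_t\})$ as $\eps\to0$ (lower semicontinuity gives one inequality, the monotonicity the other). After this step $\nu_t^\eps$ has a density bounded by $1/(\pi\sqrt\eps)$, is real-analytic in $x$, its support is a finite union of intervals, and $H(\nu_t^\eps)$ is bounded and continuous in $t$ (Biane's formulas \cite{Bi97}); moreover $d(\nu_t,\nu_t^\eps)\leq\sqrt\eps$ uniformly in $t$, giving the Wasserstein convergence. \emph{Step 2: time mollification.} Convolve in the time variable: replace $\rho_t^\eps$ by $\rho^\eps_{\cdot}*\varphi_\delta(t)$ (extending the path constantly or by reflection near $t=0,1$), which makes the density jointly smooth in $(x,t)$ and, by convexity of the kinetic term $(u,\rho)\mapsto u^2\rho$ and of $\rho\mapsto\rho^3$ together with the contraction property of convolution applied to the continuity equation, does not increase the energy in the limit $\delta\to0$; continuity of $t\mapsto\Sigma(\nu_t^\eps)$ controls the boundary entropy terms. \emph{Step 3: forcing a single interval.} Add a tiny drift/diffusion (e.g.\ run an infinitesimal amount of the heat flow, or equivalently convolve again with $\sigma_{\eps'}$ for $\eps'\ll\eps$ at every time and rescale time slightly) so that the supports merge into one interval for all $t$; since $\nu_t^\eps$ already has bounded analytic density, a uniformly small perturbation suffices and the energy changes by $o(1)$. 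Throughout, the drift $\partial_x k_t = u_t - H(\nu_t)/2$ inherits smoothness from that of $\rho_t$ and $u_t$, because it is recovered from the smooth $\rho_t$ by solving $\partial_x(\rho_t(\partial_x k_t))=-\partial_t\rho_t-\frac12\partial_x(\rho_t H(\nu_t))$, an ODE in $x$ with smooth coefficients on the (single) interval of support.

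\emph{Verifying \eqref{convS}.} Combining the three steps, $\liminf$ of the approximate entropies is $\geq S_{\mu_0}(\{\nu_t\})$ by lower semicontinuity of $S$ (Proposition~\ref{p:rate}) along the $d$-convergent sequence, while $\limsup\leq S_{\mu_0}(\{\nu_t\})$ follows from the explicit energy representation \eqref{Sent}: the kinetic and cubic integrals are controlled by convexity/Jensen under each mollification, and the entropy differences $\Sigma(\nu_1^\eps)-\Sigma(\nu_0^\eps)$ converge by continuity of $\Sigma$ under $\boxplus\sigma_\eps$ (bounded above by the second moment, bounded below by the energy bound from \eqref{lk}).

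\emph{Main obstacle.} The delicate point is not the regularization per se but the \emph{upper bound} $\limsup_\eps S_{\nu_0^\eps}(\{\nu_t^\eps\})\leq S_{\mu_0}(\{\nu_t\})$, i.e.\ showing that none of the smoothing steps creates extra dynamical entropy. For Step 1 this is exactly the content of the free-convolution monotonicity cited from \cite{cabanal2003discussions}; for Steps 2 and 3 one must check carefully that the velocity field associated to the mollified density is indeed (close to) the mollification of the original velocity field weighted by the density, so that Jensen's inequality applies to the convex map $(m,\rho)\mapsto m^2/\rho$ with $m=\rho u$ — this requires keeping $\rho_t^\eps$ bounded below on its support, which is where having merged the support into a single interval with analytic, non-degenerate density is used. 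Establishing this lower bound on the density near the edges of the support (where the semicircle convolution gives square-root vanishing) is the technical heart of the argument, and is handled by the fact that the edge contributes negligibly to the energy integrals while the interior is uniformly non-degenerate.
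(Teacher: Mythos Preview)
Your Step~1 (free convolution with $\sigma_\eps$) matches the paper and is the right opening move. But Steps~2 and~3 have genuine gaps that the paper's construction is specifically designed to fill.

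\textbf{Missing compactification.} After $\nu_t\boxplus\sigma_\eps$ the support can still be unbounded (free convolution with a semicircle does not truncate tails). The paper inserts a \emph{truncation} step: restrict $\rho_t^{(1)}$ to its $\eps_1/2$-quantile interval $[\fa(t),\fb(t)]$ and renormalize. This is what makes the support a single compact interval and is prerequisite for the edge analysis below. Your proposal to ``force a single interval'' by extra heat flow or additional $\boxplus\sigma_{\eps'}$ does not compactify, and Biane's formulas give a finite union of intervals in general, not necessarily one.

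\textbf{Smoothness of the drift at the edge --- the real issue.} You identify correctly that the delicate point is the edge, but your resolution (``the edge contributes negligibly to the energy integrals'') addresses the wrong question. The claim to prove is that $\partial_x k_t^\eps = -(\int^x\partial_t\rho_t^\eps)/\rho_t^\eps - \tfrac12 H(\nu_t^\eps)$ is \emph{smooth}, not that it has small energy. With a square-root edge from free convolution, $\rho_t^\eps$ vanishes at the boundary while the numerator need not vanish to matching order, so smoothness of the ratio is not automatic. The paper's device is to mollify in \emph{both} $x$ and $t$ (not time only), and then take a convex combination $\rho_t^\eps = \eps_2\chi_t + (1-\eps_2)\rho_t^{(3)}$ with an explicit smooth bump $\chi_t$ supported on a slightly larger interval $[\tilde\fa(t)-2\eps_1,\tilde\fb(t)+2\eps_1]$. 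This accomplishes two things your scheme does not: (i) in the interior, $\rho_t^\eps\gtrsim\eps_2\sqrt{\eps_1}>0$, so the ratio is trivially smooth; (ii) in the edge zone $[\tilde\fa(t)-2\eps_1,\tilde\fa(t)-\eps_1]$ the density is \emph{exactly} $\eps_2\phi(x-\tilde\fa(t)-2\eps_1)$, so one computes directly $\partial_x k_t^\eps = \tilde\fa'(t)-\tfrac12 H(\nu_t^\eps)$, which is manifestly smooth. Your time-only mollification gives neither the interior lower bound nor an explicit edge profile, so the ODE argument you sketch (``smooth coefficients on the single interval of support'') breaks down precisely where the coefficient $\rho_t^\eps$ degenerates.

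In short: the architecture you propose is close, but you are missing (a) the truncation step and (b) the convex combination with an explicit boundary profile $\chi_t$, which together are what actually deliver a smooth drift.
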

Above, smooth means differentiable and with continuous derivative (we shall not need more the proof yields eventually the existence of more derivatives).

\begin{proposition}\label{p:lowerb}
Fix a probability measure $\mu_0$ satisfying Assumption \ref{a:mu0} and $\delta>0$. Let $\tilde \mu_0$ be a compactly supported probability measure such that  $d(\mu_0, \tilde \mu_0)\leq \delta/3$.
Let $\{\tilde\nu_t(x)\}_{0\leq t\leq 1}$
be a  compactly supported measure-valued process  with a smooth density  $\tilde\rho$ in both $x,t$ such that $\tilde\nu_0=\tilde\mu_0$. Assume that the corresponding drift $\del_x \tilde k_t$  defined by
\begin{align}\label{driftk}
\del_t\tilde \rho_t(x)+\del_x(\tilde \rho_t\tilde u_t)=0,\quad 
\tilde u_t (x)= \frac{1}{2} H(\tilde \nu_t)(x)+\del_x\tilde k_t(x),
\end{align}
is also smooth in both $x,t$.  Then, the following large deviations lower bound holds
\begin{align*}\begin{split}
& \liminf_{n\rightarrow\infty}\frac{1}{n^2}\log\bP(\{\nu^{n}_t\}_{0\leq t\leq 1}\in \bB(\{\tilde \nu_t\}_{0\leq t\leq 1}, \delta))
\geq  -S_{\tilde \mu_0}(\{\tilde \nu_t\}_{0\leq t\leq 1})+\oo_\delta(1).
\end{split}\end{align*}
\end{proposition}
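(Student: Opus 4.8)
The goal is a large deviations lower bound for the empirical density of Dyson Brownian motion near a reference path $\{\tilde\nu_t\}$ which is regular: compactly supported, smooth density, smooth drift $\partial_x\tilde k_t$. The natural strategy is a Girsanov change of measure: tilt the law of DBM so that, under the new law $\tilde{\mathbb Q}$, the empirical measure concentrates on $\{\tilde\nu_t\}$; then the entropy cost of the tilt is precisely $S_{\tilde\mu_0}(\{\tilde\nu_t\})$ in the scale $n^2$, and the probability under $\tilde{\mathbb Q}$ of being in the ball $\mathbb B(\{\tilde\nu_t\},\delta)$ tends to $1$. Concretely, since $\partial_x\tilde k_t$ is smooth and $\tilde\nu_t$ is compactly supported, one chooses the drift perturbation $\frac{1}{2n}\sum_j (x_i-x_j)^{-1} \leadsto \frac{1}{2n}\sum_j (x_i-x_j)^{-1} + \tilde b_t(x_i)$ where $\tilde b_t$ is a smooth bounded function engineered so that the limiting McKean–Vlasov equation for the tilted process is exactly the continuity equation $\partial_t\tilde\rho_t + \partial_x(\tilde\rho_t\tilde u_t)=0$; the right choice is $\tilde b_t(x) = \partial_x\tilde k_t(x)$ (up to the factor $\beta$ conventions in \eqref{e:minimizereq}), so that the limiting velocity becomes $\frac12 H(\tilde\nu_t) + \partial_x\tilde k_t = \tilde u_t$. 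The Radon–Nikodym derivative is the exponential martingale built from $\sum_i \tilde b_t(x_i(t))\,\d W_i(t)/\sqrt{\beta n}$, and plugging in \eqref{e:df220}-style Itô expansions shows $\frac{1}{n^2}\log(\d\mathbb Q/\d\tilde{\mathbb Q})$ evaluated on a path in $\mathbb B(\{\tilde\nu_t\},\delta)$ equals $-S_{\tilde\mu_0}(\{\tilde\nu_t\}) + \oo_\delta(1) + \oo_n(1)$, using the representation \eqref{Sent}.

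**Key steps in order.** First, set up the tilted process $\tilde x_i(t)$ solving the SDE with the extra smooth bounded drift $\tilde b_t$; because $\tilde b_t$ is Lipschitz and bounded this is a routine strong-existence argument and Girsanov applies cleanly (Novikov is immediate from boundedness of $\tilde b_t$ on the relevant region, exactly as in the proof of Proposition \ref{p:changem}). Second, prove a law of large numbers: under $\tilde{\mathbb Q}$, the empirical density $\{\tilde\nu_t^n\}$ converges in probability (for the topology of $\cC([0,1],\bM_1(\bR))$) to the unique solution of the McKean–Vlasov PDE $\partial_t\rho_t + \partial_x(\rho_t(\frac12 H(\rho_t)+\tilde b_t))=0$ started from $\tilde\mu_0$ — and identify that solution as $\tilde\rho_t$. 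This is where uniqueness of the PDE with a smooth field is used; the excerpt advertises exactly such a uniqueness criterion "inspired from \cite{LLX}", so I would invoke it here. Since $\tilde\nu_0^n$ has $\tilde\mu_0$ as weak limit only up to $d(\mu_0,\tilde\mu_0)\le\delta/3$, the LLN limit is within $\delta/3 + \oo_n(1)$ of $\{\tilde\nu_t\}$ in the appropriate metric, which is why the bound carries the $\oo_\delta(1)$ error and the $\delta/3$ budget in the hypothesis. Third, combine: $\mathbb P(\{\nu_t^n\}\in\mathbb B(\{\tilde\nu_t\},\delta)) = \tilde{\mathbb E}[\mathbf 1_{\mathbb B}\, e^{n^2 \Phi_n}]$ where $\Phi_n = \frac{1}{n^2}\log(\d\mathbb Q/\d\tilde{\mathbb Q})$; on the event $\mathbb B$, $\Phi_n = -S_{\tilde\mu_0}(\{\tilde\nu_t\}) + \oo_\delta(1) + \oo_n(1)$ by the continuity of the entropy functional in $\nu$ (here the compact support and smoothness of $\tilde\nu$ make all the integrals in \eqref{e:df220} uniformly continuous in the weak topology restricted to a ball); and $\tilde{\mathbb Q}(\mathbb B)\to 1$ by Step 2. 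Taking $\frac{1}{n^2}\log$ and then $n\to\infty$ yields the claim.

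**Main obstacle.** The delicate point is Step 2, the quantitative law of large numbers under the tilted measure and, above all, the uniqueness of the limiting McKean–Vlasov equation: the Hilbert transform term makes the field only weakly regular, and one needs the limit to be characterized uniquely so that it can be matched to the prescribed $\tilde\rho_t$. This is precisely the place where the paper's improvement over \cite{cabanal2003discussions} (via the \cite{LLX}-type criterion) enters, and one must verify that $\tilde b_t = \partial_x\tilde k_t$ smooth is enough to land in the regime where uniqueness holds. A secondary technical nuisance is that near the edge of $\supp\tilde\nu_t$ the density may vanish, so controlling the extreme particles $\tilde x_1(t),\tilde x_n(t)$ under $\tilde{\mathbb Q}$ (needed both for the LLN and, later, for the Bessel application in Proposition \ref{p:lowerbound2}) requires an a priori confinement estimate; since $\tilde b_t$ is bounded and the log-repulsion drift is globally inward on a scale set by the second moment bound \eqref{e:L2norm}, a standard comparison/Gronwall argument confines all particles to a fixed compact set with overwhelming probability, which suffices.
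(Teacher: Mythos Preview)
Your Girsanov strategy and the identification of the entropy cost on the ball are correct and match the paper exactly: the tilt is by the exponential martingale built from $\beta\,\partial_x\tilde k_t$, and on $\mathbb B(\{\tilde\nu_t\},\delta)$ the exponent equals $-n^2 S_{\tilde\mu_0}(\{\tilde\nu_t\})+\oo_\delta(1)n^2$ by continuity, as in \eqref{e:replace2}.

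Where you diverge from the paper is in Steps~1--2, and there is a small gap in your treatment of the initial condition. You tilt the original process (with $\nu_0^n\to\mu_0$) and then assert that the McKean--Vlasov limit, which starts from $\mu_0$, is within $\delta/3$ of $\tilde\nu_t$ (which starts from $\tilde\mu_0$). That requires a \emph{stability} estimate for the limiting PDE in its initial datum, not merely uniqueness; you invoke it implicitly but do not justify it. The paper sidesteps this entirely: it first couples the untilted DBM from $\nu_0^n$ with the untilted DBM from the quantiles $\tilde\nu_0^n$ of $\tilde\mu_0$ using the same Brownian motions, and observes (equation \eqref{e:difeq} and the line after) that $\partial_t\frac{1}{n}\sum_i(\tilde x_i-x_i)^2\le 0$, so $d(\tilde\nu_t^n,\nu_t^n)\le d(\tilde\nu_0^n,\nu_0^n)\le\delta/2$. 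This reduces the problem to initial data exactly at the quantiles of $\tilde\mu_0$, after which the tilt is applied.

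For the law of large numbers under $\mathbb Q^{\beta\tilde k}$, the paper does \emph{not} go through compactness plus McKean--Vlasov uniqueness. Instead it proves the much stronger Proposition~\ref{p:lowerbound2}: a direct Gronwall argument on $\max_i(\tilde x_i(t)-\tilde\gamma_i(t))$ (and its negative), using that the drift $\partial_x\tilde k_t$ is Lipschitz and that the quantiles $\tilde\gamma_i(t)$ approximately solve the same SDE up to $\OO(n^{-1/2})$ (equation \eqref{e:apDBM}). This gives $\sup_t\max_i|\tilde x_i(t)-\tilde\gamma_i(t)|\le e^{K}(\max_i|\tilde x_i(0)-\tilde\gamma_i(0)|+M/\sqrt n)$ with $M$ stochastically bounded, hence $\mathbb Q^{\beta\tilde k}(\mathbb B)=1-\oo(1)$ directly. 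Your soft LLN would suffice for the DBM lower bound, but the paper's particle-by-particle control is what later lets one keep $\tilde s_n(t)\ge\fa$ in the Dyson Bessel argument (Section~\ref{s:lddown}); your proposed second-moment confinement would not give that.
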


\begin{proof}[Proof of Large deviations lower bound \eqref{e:DBMlow}]
We  approximate $\{\nu_t\}_{0\leq t\leq 1}$ by $\{\nu^\varepsilon_t\}_{0\leq t\leq 1}$ as in Proposition \ref{p:approximation} and  take $\{\tilde \nu_t(x)\}_{0\leq t\leq 1}$ equal  $\{\nu^\varepsilon_t\}_{0\leq t\leq 1}$   in Proposition \ref{p:lowerb}  with sufficiently small $\varepsilon$.
 Then it follows
\begin{align*}\begin{split}
&\phantom{{}={}}\lim_{n\rightarrow\infty}\frac{1}{n^2}\log\bP(\{\nu^n_t\}_{0\leq t\leq 1}\in \bB(\{\nu_t\}_{0\leq t\leq 1}, \delta))
\geq \lim_{\varepsilon\rightarrow 0}\lim_{n\rightarrow\infty}\frac{1}{n^2}\log\bP(\{\nu^n_t\}\in \bB(\{\nu^{\varepsilon}_t\}_{0\leq t\leq 1}, \delta/2))\\
&\geq \lim_{\varepsilon\rightarrow 0} -S_{\nu_0^\varepsilon}(\{\nu_t^\varepsilon\}_{0\leq t\leq 1}))+\oo_\delta(1)\geq -S_{\mu_0}(\{\nu_t\}_{0\leq t\leq 1}))+\oo_\delta(1),
\end{split}\end{align*} where in the last inequality we used \eqref{convS}.
The large deviations lower bound follows by taking $\delta\rightarrow 0$.
\end{proof}


\begin{proof}[Proof of Proposition \ref{p:approximation}]
We fix three parameters $\varepsilon_3\ll\varepsilon_2\ll\varepsilon_1\ll 1$.  The construction of $\nu_t^\varepsilon$ consists of the following three steps. Note that $S$ is lower semi-continuous hence we only need to show that
$$\limsup_{\varepsilon\rightarrow 0} S_{\nu^\varepsilon_0}(\{\nu_t^\varepsilon\}_{0\leq t\leq 1})\le S_{\mu_0}(\{\nu_t\}_{0\leq t\leq 1}).$$ 
\emph{Step 1 (Free Convolution).} We replace $\nu_t$ by $\nu_t^{(1)}=\nu_t\boxplus\sigma_{\varepsilon_1}$, its free convolution with a small semi-circle distribution of size $\varepsilon_1$. Then we have $d(\nu_t, \nu_t^{(1)})=\oo_{\varepsilon_1}(1)$. More importantly, $\nu_t^{(1)}(dx)=\rho_t^{(1)}(x)\d x $ has density bounded by $\OO(1/\sqrt{{\varepsilon_1}})$, and  it is proven in \cite{cabanal2003discussions} that
\begin{align}\label{bor}
S_{\nu_0^{(1)}}(\{\nu_t^{(1)}\}_{0\leq t\leq 1})\leq S_{\mu_0}(\{\nu_t\}_{0\leq t\leq 1}).
\end{align}
By our assumption $S_{\mu_0}(\{\nu_t\}_{0\leq t\leq 1})<\infty$, Proposition \ref{p:rate} implies that $\nu_1$ has bounded second moment and finite free entropy $-\infty<\Sigma(\nu_1)<\infty$. The same bound holds for its free convolution with semi-circle distribution, i.e. $-\infty<\Sigma(\nu_1^{(1)})<\infty$. Moreover, the second moments of $\rho^{(1)}$ and $u^{(1)}$ under $\rho^{(1)}_t(x)\d x\d t$ are bounded independently of $\varepsilon_1$ by Proposition \ref{p:rate} and \eqref{bor}.

\emph{Step 2 (Truncation).} 
If $\{\nu^{(1)}_{t}\}_{0\leq t\leq 1}$ is not compactly supported, in this step we
 truncate it to have compact support.
Let $\fa(t), \fb(t)$ be such that
\begin{align*}
\int_\infty^{\fa(t)} \rho_t^{(1)}(x)\rd x={\varepsilon_1}/2, \quad \int^\infty_{\fb(t)}\rho_t^{(1)}(x)\rd x={\varepsilon_1}/2\,.
\end{align*}
Observe that because $t\rightarrow \nu^{(1)}_t$ is weakly continuous and with bounded density, $t\rightarrow \fa(t)$ and $t\rightarrow \fb(t)$ are continuous. Moreover, because the second moments of $\nu^{(1)}_t$ are uniformly bounded we see that $\fa(t)$ and $\fb(t)$ are at most of order $1/\sqrt{\varepsilon_1}$. 
Then we restrict $\rho^{(1)}_t$ on $[\fa(t),\fb(t)]$ by setting
\begin{align*}
\rho^{(2)}_t= \frac{\bm1([\fa(t),\fb(t)])}{1-{\varepsilon_1}}\rho^{(1)}_t(x).
\end{align*}
and let $\nu^{(2)}_{t}(\d x)=\rho^{(2)}_{t}(x)\d x$.
From the construction,  we have $d(\rho^{(1)}_t, \rho_t^{(2)})=\oo_{\varepsilon_1}(1)$, and $\rho_t^{(2)}$ has bounded $L_2$ norm. The corresponding $u^{(2)}_t(x)$ is the restriction of $u^{(1)}_t(x)$ to $[\fa(t),\fb(t)]$. Hence, we get
\begin{align}\label{e:largeb}
&S_{\nu_0^{(2)}}(\{\nu_t^{(2)}\}_{0\leq t\leq 1})
=\frac{\beta}{2}\left(\int_0^1 \int ((u_t^{(2)})^2+\frac{\pi^2}{12}(\rho^{(2)}_t(x))^2)\rho^{(2)}_t(x)\rd x\rd t-\frac{1}{2}(\Sigma(\nu^{(2)}_1)-\Sigma(\nu^{(2)}_0))\right)\nonumber\\
&=\frac{\beta}{2}\left(\int_0^1 \frac{1}{1-\varepsilon_1}\int_{\fa(t)}^{\fb(t)} ((u_t^{(1)})^2+\frac{\pi^2}{12}(\frac{\rho^{(1)}_t(x)}{1-\varepsilon_1})^2)\rho^{(1)}_t(x)\rd x\rd t-\frac{1}{2}(\Sigma(\nu^{(2)}_1)-\Sigma(\nu^{(2)}_0))\right)\\
&\rightarrow\frac{\beta}{2}\left(\int_0^1 \int ((u_t^{(1)})^2+\frac{\pi^2}{12}(\rho^{(1)}_t(x))^2)\rho^{(1)}_t(x)\rd x\rd t-\frac{1}{2}(\Sigma(\nu^{(1)}_1)-\Sigma(\nu^{(1)}_0))\right)=S_{\nu_0^{(1)}}(\{\nu_t^{(1)}\}_{0\leq t\leq 1}),\nonumber
\end{align}
as ${\varepsilon_1}\rightarrow 0$ {by monotone convergence theorem.}

\emph{Step 3. (Smoothing)} 
We first extend $\rho_t^{(2)}$  by $\rho_0^{(2)}$ for $t\leq 0$ and $\rho^{(2)}_1$ for $t\geq 1$. Then we replace it by its convolution with a bump function $\varphi_{\varepsilon_3}$ on the scale $\varepsilon_3$, i.e a smooth function  with $L^{1}$ norm equal to one, supported on $[-\varepsilon_3, \varepsilon_3]^2$, with $\varepsilon_3\ll \varepsilon_1$:
\begin{align*}
\rho_t^{(3)}(x)=\int\rho^{(2)}_{t-s}(x-y) \varphi_{\varepsilon_3}(y,s)\rd y\rd s.
\end{align*}
Then $\nu^{(3)}(\d x)=\rho_t^{(3)}(x)\d x$ is supported on $[\fa(t)-\varepsilon_3,\fb(t)+\varepsilon_3]$.
Next we replace this smoothed density by its average with a smooth characteristic function $\chi_t(x)$ constructed in the following way. Take a smoothed step function $\phi(x)$ such that $\phi(x)=0$ for $x\leq 0$, $\phi(x)=1$ for $x\geq {\varepsilon_1}$. Let $\tilde \fa(t),\tilde\fb(t)$ be two smooth functions such that for all times $[\fa(t)-\varepsilon_3,\fb(t)+\varepsilon_3]\subset [\tilde \fa(t),\tilde\fb(t)]$. Then we let 
\begin{align*}
\chi_t(x)=
\left\{\begin{array}{cc}
\phi(x-\tilde\fa(t)-2{\varepsilon_1}), & x\in [\tilde\fa(t)-2{\varepsilon_1}, \tilde\fa(t)-{\varepsilon_1}],\\
\Omega(1), & x\in[\tilde\fa(t)-{\varepsilon_1}, \tilde\fb(t)+{\varepsilon_1}],\\
\phi(\tilde\fb(t)+2{\varepsilon_1}-x), & x\in [\tilde\fb(t)+{\varepsilon_1}, \tilde\fb(t)+2{\varepsilon_1}].
\end{array}\right.
\end{align*}
Moreover, we construct $\chi_t(x)$ such that $\int \chi_t(x)\rd x=1$ for all $0\leq t\leq 1$.  Because $\fa(t),\fb(t)$ are at most of order $1/\sqrt{\varepsilon_1}$,  we can choose $\tilde \fa(t)$ and $\tilde\fb(t)$ such that $\chi_t$ is lower bounded by $\OO(\sqrt{\varepsilon_1})$ on $[\tilde\fa(t)-{\varepsilon_1}, \tilde\fb(t)+{\varepsilon_1}]$. We can also make sure that it is upper bounded by one, and such that $\int^x\partial_t \chi_t$ is uniformly bounded. 
We replace $\rho_t^{(2)}$ by
\begin{align*}
\rho_t^\varepsilon(x)={\varepsilon_2}\chi_t(x)+(1-{\varepsilon_2})\rho_t^{(3)}(x).
\end{align*}
Then $\rho_t^\varepsilon(x)$ is smooth, and for $x\in[\tilde \fa(t)-{\varepsilon_1}, \tilde \fb(t)+{\varepsilon_1}]$, $\rho_t^\varepsilon$ is uniformly lower bounded by $\Omega({\varepsilon_2}\sqrt{\varepsilon_1})$. For $x\in [\tilde \fa(t)-2{\varepsilon_1}, \tilde \fa(t)-{\varepsilon_1}]$, $\rho_t^\varepsilon(x)={\varepsilon_2}\phi(x-\tilde \fa(t)-2{\varepsilon_1})$ and $x\in [\tilde \fb(t)+{\varepsilon_1}, \tilde \fb(t)+2{\varepsilon_1}]$, $\rho_t^\varepsilon(x)={\varepsilon_2}\phi(\tilde \fb(t)+2{\varepsilon_1}-x)$.
From the construction, we have that $d(\rho^{\varepsilon}_t, \rho_t^{(2)})=\oo_{\varepsilon_1}(1)$.

For the second term in the dynamical entropy \eqref{Sent}, by Young's convolution inequality,
\begin{align}\begin{split}\label{e:Young}
\int (\rho_t^{ \varepsilon}(x))^3 \rd x\rd t
&=\int \left({\varepsilon_2}\chi_t(x)+(1-{\varepsilon_2})\rho_t^{(3)}(x)\right)^3\rd x\rd t \\
&=(1+\OO({\varepsilon_2}))\int \left(\int\rho^{(2)}_{t-s}(x-y) \varphi_{\varepsilon_3}(y,s)\rd y\rd s\right)^3\rd x\rd t +\OO({\varepsilon_2})\\
&\leq (1+\OO({\varepsilon_2}))\int (\rho_t^{(2)}(x))^3\rd x\rd t +\OO({\varepsilon_2}),
\end{split}\end{align}
where we used that $\chi_t$ was uniformly bounded above and with bounded expectation. 
In the following we study the first term in \eqref{Sent},
\begin{align*}
\int_0^1\int \frac{\left(\del_t\int^x \rho^{\varepsilon}_t(y)\rd y\right)^2}{\rho^{\varepsilon}_t(x)}\rd x\rd t
=\int_0^1\int \frac{\left(\del_t\int^x ({\varepsilon_2}\chi_t(y)+(1-{\varepsilon_2})\rho_t^{(3)}(y))\rd y\right)^2}{{\varepsilon_2}\chi_t(x)+(1-{\varepsilon_2})\rho_t^{(3)}(x)}\rd x\rd t.
\end{align*}

For $x\in [\tilde \fa(t)-2{\varepsilon_1}, \tilde\fa(t)-{\varepsilon_1}]$, $\rho_t^\varepsilon(x)={\varepsilon_2}\phi(x-\tilde\fa(t)-2{\varepsilon_1})$, the integrand simplifies and is of order  ${\varepsilon_2} (\tilde\fa'(t))^2\phi(x-\tilde\fa(t)-2{\varepsilon_1})$, the total contribution is $\OO(\varepsilon_1\varepsilon_2)$. Similarly for $x\in [\tilde\fb(t)+{\varepsilon_1}, \tilde\fb(t)+2{\varepsilon_1}]$, $\rho_t^\varepsilon(x)={\varepsilon_2}\phi(\tilde \fb(t)+2{\varepsilon_1}-x)$, the total contribution is $\OO(\varepsilon_1\varepsilon_2)$. We get
\begin{align}\label{e:trucc}
\int_0^1\int \frac{\left(\del_t\int^x \rho^{\varepsilon}_t(y)\rd y\right)^2}{\rho^{\varepsilon}_t(x)}\rd x\rd t
=\int_0^1\int_{\fa(t)-{\varepsilon_1}}^{\fb(t)+{\varepsilon_1}} \frac{\left(\del_t\int^x \rho^{\varepsilon}_t(y)\rd y\right)^2}{\rho^{\varepsilon}_t(x)}\rd x\rd t+\OO({\varepsilon_1}\varepsilon_2).
\end{align}
For $x\in [\tilde\fa(t)-{\varepsilon_1}, \tilde \fb(t)+{\varepsilon_1}]$, we have $\rho_t^\varepsilon(x)$ is lower bounded by $\Omega(\varepsilon_2({\varepsilon_1})^{1/2})$. Moreover, from Step 1, we know that $\rho^{(1)}_t(x)$ is at most of order
$ \OO( 1/{\sqrt{{\varepsilon_1}}})$, and so  are the densities $\rho_t^{(2)}(x)$ and $ \rho^\varepsilon_t(x)$. 
The boundedness of $S(\{\nu_t^{(2)}\}_{0\leq t\leq 1})$ implies that $\del_t\int^x \rho_t^{(2)}(y)\rd y$ is in $L^2$:
$$\int_0^1 \int \left(\del_t\int^x \rho_t^{(2)}(y)\rd y\right)^2 \rd x \rd t \lesssim \frac{1}{\sqrt{\varepsilon_1}} \int_0^1 \int  (u_t^{(2)}(y))^2 \rho_t^{(2)}(y)\rd y\rd t\le \frac{2}{\beta\sqrt{\varepsilon_1} }S_{\nu_0^{(2)}}(\{\nu_t^{(2)}\}_{0\leq t\leq 1})
\,.$$
Therefore the convolution density $\del_t\int^x \rho_t^{(3)}(y)\rd y=\int^x \del_t(\rho_\cdot^{(2)}* \varphi_{\varepsilon_3})(y,t)\rd y$ converges to $\int^x \del_t\rho_t^{(2)}(y)\rd y$ in $L^2$ norm as $\varepsilon_3\rightarrow 0$ faster than $\varepsilon_1$ . 
\begin{align}\begin{split}\label{e:r3r2}
&\phantom{{}={}}\left|\int_0^1\int_{\tilde\fa(t)-{\varepsilon_1}}^{\tilde\fb(t)+{\varepsilon_1}} \frac{\left(\del_t\int^x \rho^{(3)}_t(y)\rd y\right)^2}{\rho^{\varepsilon}_t(x)}\rd x\rd t-\int_0^1 \int\frac{\left(\del_t\int^x \rho^{(2)}_t(y)\rd y\right)^2}{\rho^\varepsilon_t(x)}\rd x\rd t\right|\\
&\lesssim \int_0^1 \int_{\tilde\fa(t)-{\varepsilon_1}}^{\tilde\fb(t)+{ \varepsilon_1}} \frac{1}{\varepsilon_2(\varepsilon_1)^{\frac{1}{2}}}\left|\left(\int^x \del_t\rho^{(3)}(y)\rd y \right)^2-\left(\int^x \del_t\rho^{(2)}_t(x)\rd y\right)^2\right|
\rd x\rd t\rightarrow 0,
\end{split}\end{align}
provided we choose $\varepsilon_3$ going to zero fast enough with respect to $\varepsilon_1$ and $\varepsilon_2$. 
Now we can estimate the right hand side of  \eqref{e:trucc} as
\begin{align}\begin{split}\label{e:r3}
&\int_0^1\int_{\tilde\fa(t)-{\varepsilon_1}}^{\tilde\fb(t)+{\varepsilon_1}} \frac{\left(\del_t\int^x \rho^{\varepsilon}_t(y)\rd y\right)^2}{\rho^{\varepsilon}_t(x)}\rd x\rd t
=\int_0^1\int_{\tilde\fa(t)-{\varepsilon_1}}^{\tilde\fb(t)+{\varepsilon_1}} \frac{\left({\varepsilon_2}\int^x \del_t \chi_t(y)+ (1-{\varepsilon_2})\int^x \del_t\rho^{(3)}(y)\rd y \right)^2}{\rho^{\varepsilon}_t(x)}\rd x\rd t\\
&=
\OO(\frac{{\varepsilon_2}}{\varepsilon_1^{1/2}})+\OO(\varepsilon_2) \int_0^1\int_{\tilde\fa(t)-{\varepsilon_1}}^{\tilde\fb(t)+{\varepsilon_1}} \frac{| \int^x \del_t\rho^{(3)}(y)\rd y |}{\rho^{\varepsilon}_t(x)}\rd x\rd t
+(1-{\varepsilon_2})^2\int_0^1\int_{\tilde\fa(t)-{\varepsilon_1}}^{\tilde\fb(t)+{\varepsilon_1}} \frac{\left(\int^x \del_t\rho^{(3)}(y)\rd y \right)^2}{\rho^{\varepsilon}_t(x)}\rd x\rd t\\
&\leq \OO(\frac{{\varepsilon_2}}{\varepsilon_1^{1/2}})+\OO(\varepsilon_2)\int_0^1\int_{\tilde\fa(t)-{\varepsilon_1}}^{\tilde\fb(t)+{\varepsilon_1}} \frac{ ({\varepsilon_2})^{1/2}+(1/{\varepsilon_2})^{1/2}\left( \int^x \del_t\rho^{(3)}(y)\rd y\right)^2 }{\rho^{\varepsilon}_t(x)}\rd x\rd t\\
&+(1-{\varepsilon_2})^2\int_0^1\int_{\tilde\fa(t)-{\varepsilon_1}}^{\tilde\fb(t)+{\varepsilon_1}} \frac{\left(\int^x \del_t\rho^{(3)}(y)\rd y \right)^2}{\rho^{\varepsilon}_t(x)}\rd x\rd t\\
&= \OO(\frac{{\varepsilon_2}}{\varepsilon_1^{1/2}})+ \OO(\frac{{\varepsilon_2^{1/2}}}{\varepsilon_1^{1/2}}) +
(1+\OO(({\varepsilon_2})^{1/2}))\int_0^1\int_{\tilde\fa(t)-{\varepsilon_1}}^{\tilde\fb(t)+{\varepsilon_1}} \frac{\left(\int^x \del_t\rho^{(3)}(y)\rd y \right)^2}{\rho^{\varepsilon}_t(x)}\rd x\rd t,
\end{split}\end{align}
where we used that $\rho_t^\varepsilon(x)\gtrsim {\varepsilon_2}\sqrt{\varepsilon_1}$ on $x\in[\tilde\fa(t)-{\varepsilon_1}, \tilde\fb(t)+{\varepsilon_1}]$. 
Moreover,  for the second term in \eqref{e:r3} we have 
\begin{align}\begin{split}\label{e:r2}
&\int_0^1 \int\frac{\left(\del_t\int^x \rho^{(3)}_t(y)\rd y\right)^2}{\rho^\varepsilon_t(x)}\rd x\rd t
=
\int_0^1 \int\frac{\left(\del_t\int^x \rho^{(3)}_t(y)\rd y\right)^2}{{\varepsilon_2}\chi_t(x)+(1-{\varepsilon_2})\rho_t^{(3)}(x)}\rd x\rd t\\
&\rightarrow 
\int_0^1 \int\frac{\left(\del_t\int^x \rho^{(2)}_t(y)\rd y\right)^2}{{\varepsilon_2}\chi_t(x)+(1-{\varepsilon_2})\rho_t^{(2)}(x)}\rd x\rd t\leq \frac{1}{1-{\varepsilon_2}}\int_0^1 \int\frac{\left(\del_t\int^x \rho^{(2)}_t(y)\rd y\right)^2}{\rho_t^{(2)}(x)}\rd x\rd t.
\end{split}\end{align}
when $\varepsilon_{3}$ goes to zero and we used \eqref{e:r3r2}.
Combining the estimates \eqref{e:trucc}, \eqref{e:r3r2}, \eqref{e:r3} and \eqref{e:r2} all together, we get
\begin{align}\label{e:fft}
\int_0^1\int \frac{\left(\del_t\int^x \rho^{\varepsilon}_t(y)\rd y\right)^2}{\rho^{\varepsilon}_t(x)}\rd x\rd t
=(1+\oo_{\varepsilon_3,\varepsilon_2,{\varepsilon_1}}(1))\int_0^1 \int\frac{\left(\del_t\int^x \rho^{(2)}_t(y)\rd y\right)^2}{\rho_t^{(2)}(x)}\rd x\rd t+\oo_{\varepsilon_3,\varepsilon_2,{\varepsilon_1}}(1).
\end{align}
where $\oo_{\varepsilon_3,\varepsilon_2,{\varepsilon_1}}$ is small when $\varepsilon_3\ll\varepsilon_2\ll\varepsilon_1$. 
Moreover, since the densities $\rho_0^{(2)}(x), \rho_1^{(2)}(x)$ are bounded by $\OO(1/\sqrt{{\varepsilon_1}})$, and are compactly supported, it is easy to see that 
\begin{align*}
\frac{1}{2}\left(\Sigma(\nu_1^{(3)})-\Sigma(\nu_0^{(3)})\right)
\rightarrow \frac{1}{2}\left(\Sigma(\nu_1^{(2)})-\Sigma(\nu_0^{(2)})\right),
\end{align*}
as $\varepsilon_2,\varepsilon_3$ go to zero.
We conclude from combining \eqref{e:Young} and \eqref{e:fft} that
\begin{align*}
\limsup_{\varepsilon\downarrow 0} S(\{\nu_t^{\varepsilon}\}_{0\leq t\leq 1})\leq S(\{\nu_t\}_{0\leq t\leq 1}).
\end{align*}

From the construction, $\rho_t^\varepsilon$ has uniformly bounded density, i.e. $\rho_t^\varepsilon=\OO(1/\sqrt{\varepsilon_1})$, $\supp(\nu_t^\varepsilon)$ is a single interval, and
\begin{align*}
\lim_{\varepsilon\rightarrow 0} \sup_{0\leq t\leq 1}d(\nu_t, \nu^\varepsilon_t )=0.
\end{align*}

For the density $\rho^\varepsilon_s(x)$, it satisfies
\begin{align*}
\del_t\rho^{\varepsilon}_t+\del_x(\rho^{\varepsilon}_tu^{\varepsilon}_t)=0.
\end{align*}
The drift is given by
\begin{align*}
\del_x k^{\varepsilon}_t(x)=-\frac{\int^x \del_t\rho^{\varepsilon}_t(y)\d y}{\rho_t^{\varepsilon}(x)}-\frac{1}{2}H(\nu^{\varepsilon}_t)(x).
\end{align*}
Since $\rho^{\varepsilon}_t(x)$ is smooth, i.e. in $\cC^\infty$, then $H(\nu^{\varepsilon}_t)$ is also smooth (see Remark \ref{hilberts}). For the regularity of the drift term $\del_x k^{\varepsilon}_t(x)$, we need to understand the regularity of $(\int^x \del_t\rho^{\varepsilon}_t)/\rho^{\varepsilon}_t$. 
By our construction, $\rho_t^{\varepsilon}$ is supported on $[\tilde\fa(t)-2{\varepsilon_1}, \tilde\fb(t)+{\varepsilon_1}]$, and has positive smooth density. Thus $(\int^x \del_t\rho^{\varepsilon}_t)/{\rho^{\varepsilon}_t}$ is smooth inside the support of $\rho_t^{\varepsilon}$. 
Close to the boundary of the support, on $[\tilde \fa(t)-2{\varepsilon_1}, \tilde \fa(t)-{\varepsilon_1}]$, $\rho_t^{\varepsilon}=\phi(x-\tilde \fa(t)-2{\varepsilon_1})$, where $\phi(x)$ is smooth, and $\phi(x)=0$ for $x\leq 0$. In this way
\begin{align}
\del_x k^{\varepsilon}_t(x)=-\frac{\int^x \del_t\rho^{\varepsilon}_t(y)\rd y}{\rho^{\varepsilon}_t(x)}-\frac{1}{2}H(\nu^{\varepsilon}_t)(x)=\frac{\del_t\int^x \phi(y-\tilde \fa(t)-2{\varepsilon_1})\rd y }{\phi(x-\tilde \fa(t))}-\frac{1}{2}H(\nu^{\varepsilon}_t)(x)=\tilde \fa'(t)-\frac{1}{2}H(\nu^{\varepsilon}_t)(x),
\end{align}
which is smooth in a neighborhood of $[\tilde \fa(t)-2{\varepsilon_1}, \tilde \fa(t)-{\varepsilon_1}]$. The same argument holds in a neighborhood of the right edge $[\tilde \fb(t)+{\varepsilon_1}, \tilde \fb(t)+2{\varepsilon_1}]$. 
So  $\del_x k^{\varepsilon}_t(x)$ is a smooth drift.
    
\end{proof}

%

In the following proposition, we show that for the  Dyson  Brownian motion with smooth drift, the locations of its particles are close to the quantiles of the limiting profile. Proposition \ref{p:lowerb} will then be an easy consequence.
\begin{proposition}\label{p:lowerbound2}
Let  $\{\nu_t\}_{0\leq t\leq 1}$ be a  measure valued process  with bounded support and  with smooth  density $\rho_t(x)$ in both $x,t$, and such that  the drift $\del_x  k_t(x)$  such that
\begin{align}\label{e:VK}
\del_t\rho_t(x)+\del_x(\rho_t(\frac{1}{2} H( \nu_t)(x)+\del_x k_t(x)))=0,
\end{align}
is  uniformly Lipschitz : $|\del_x  k_t(x)-\del_x  k_t(y)|\leq K|x-y|$  for all real numbers $x,y$.
We denote by $\gamma_i(t)$ the $ith$ $(1/n)$-quantiles of $ \rho_t$ given by 
\begin{align}\label{e:quant}
\frac{i-1/2}{n}=\int_\infty^{\gamma_i(t)} \rho_t(x)\rd x,\quad 1\leq i\leq n.
\end{align}
For $\beta\ge 1$, we consider the  Dyson  Brownian motion with drift $\del_x  k_t$ which is the unique strong solution of
\begin{align}\label{e:DBMk}
\rd  x_i(t)=\frac{\rd W_i (t)}{\sqrt {\beta n}}+\frac{1}{2n}\left(\sum_{j: j \neq i}\frac{1}{ x_i(t)- x_j(t)}\right)\d t+\del_x  k_t(x_i (t)) \d t.
\end{align}
starting from $(x_j(0))_{1\le i\le n}$.
Let  $M$ be a positive real number.  Then it holds 
\begin{align*}
\sup_{t\in [0,1]}\max_{1\le i\le n} | x_i(t)-\gamma_i(t)|\leq  e^{Kt}\left(\max_{1\le j\le n} (| x_j(0)-\gamma_{j}(0)|)+\frac{M}{\sqrt n}\right),
\end{align*}
with probability going to one as $M$ goes to infinity.
\end{proposition}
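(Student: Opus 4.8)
The plan is to couple the particle system \eqref{e:DBMk} directly with the deterministic quantile curves $\gamma_i(t)$ and to run a Grönwall estimate that is made dissipative by the order preservation of the Dyson interaction. \emph{First, the quantiles almost solve the noiseless equation.} Differentiating the defining identity \eqref{e:quant} in $t$, using the conservation of mass \eqref{e:VK} and integrating by parts (the boundary terms vanish because $\nu_t$ is compactly supported) gives $\gamma_i'(t)=u_t(\gamma_i(t))=\tfrac12 H(\nu_t)(\gamma_i(t))+\partial_x k_t(\gamma_i(t))$. Since $\rho_t$ is smooth, the value of the Hilbert transform at the $1/n$-quantiles is a midpoint-type Riemann approximation of the discrete sum, so
\[
\gamma_i'(t)=\frac1{2n}\sum_{j\neq i}\frac1{\gamma_i(t)-\gamma_j(t)}+\partial_x k_t(\gamma_i(t))+\mathcal E_i(t),\qquad \sup_{t\in[0,1],\,i}|\mathcal E_i(t)|=\OO(\log n/n);
\]
i.e. the quantiles solve \eqref{e:DBMk} with the noise switched off, up to the small error $\mathcal E_i$.

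\emph{Comparison and dissipativity.} Since $\beta\geq 1$ and $\partial_x k_t$ is bounded Lipschitz, \eqref{e:DBMk} has a unique strong solution with $x_1(t)>\dots>x_n(t)$ for $t>0$ (as in \cite[Lemma 4.3.3]{AGZ}), and $\gamma_1(t)>\dots>\gamma_n(t)$. Writing $\delta_i(t)=x_i(t)-\gamma_i(t)$ and subtracting,
\[
\rd\delta_i=\frac{\rd W_i}{\sqrt{\beta n}}+\Big(-\frac1{2n}\sum_{j\neq i}\frac{\delta_i-\delta_j}{(x_i-x_j)(\gamma_i-\gamma_j)}+\big(\partial_x k_t(x_i)-\partial_x k_t(\gamma_i)\big)-\mathcal E_i\Big)\rd t .
\]
The crucial point is that $x$ and $\gamma$ are ordered the same way, hence $(x_i-x_j)(\gamma_i-\gamma_j)>0$ for all $i\neq j$, so the interaction is a monotone (dissipative) term. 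This monotonicity, which replaces the characteristics method of \cite{huang2019rigidity,adhikari2020dyson,huang2020edge}, is the conceptual heart of the argument.

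\emph{Grönwall through a convex surrogate of the maximum.} Apply It\^o's formula to $\Psi_q(t)=\tfrac1q\log\sum_i e^{q\delta_i(t)}$, for which $\max_i\delta_i(t)\leq\Psi_q(t)\leq\max_i\delta_i(t)+\tfrac{\log n}q$. With $p_i=e^{q\delta_i}/\sum_k e^{q\delta_k}$ the second-order term is $\tfrac q{2\beta n}\big(1-\sum_i p_i^2\big)\leq\tfrac q{2\beta n}$; the interaction symmetrizes to $-\tfrac1{4n}\sum_{i\neq j}\tfrac{(p_i-p_j)(\delta_i-\delta_j)}{(x_i-x_j)(\gamma_i-\gamma_j)}\leq0$ because $\delta\mapsto e^{q\delta}$ is increasing; the Lipschitz term is $\leq K\max_i|\delta_i|$, the error term $\leq\sup_i|\mathcal E_i|$; and the martingale part $\mathcal N_t$ satisfies $\langle\mathcal N\rangle_t\leq t/(\beta n)$ since $\sum_i p_i^2\leq1$. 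Running the same computation for $(-\delta_i)_i$ and combining, $R(t):=\max_i|\delta_i(t)|$ obeys
\[
R(t)\leq R(0)+\frac{\log n}q+\sup_{s\leq1}\big(|\mathcal N_s|\vee|\mathcal N_s'|\big)+\int_0^t\Big(KR(s)+\OO(\log n/n)+\frac q{2\beta n}\Big)\rd s ,
\]
so Grönwall gives $R(t)\leq e^{Kt}\big(R(0)+\tfrac{\log n}q+\tfrac q{2\beta n}+\OO(\log n/n)+\sup_{s\leq1}(|\mathcal N_s|\vee|\mathcal N_s'|)\big)$. Choosing $q\asymp\sqrt{n\log n}$ makes the deterministic terms $\OO(\sqrt{\log n}/\sqrt n)$; and since $\langle\mathcal N\rangle_1\leq 1/(\beta n)$, a Dambis–Dubins–Schwarz time change and the reflection principle give $\bP\big(\sup_{s\leq1}|\mathcal N_s|>M/\sqrt n\big)\leq Ce^{-c\beta M^2}$. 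Hence on an event of probability $1-\OO(e^{-c\beta M^2})$ one gets $\sup_{t\leq1}R(t)\leq e^{Kt}\big(\max_j|x_j(0)-\gamma_j(0)|+M/\sqrt n\big)$, absorbing the deterministic $\OO(\sqrt{\log n}/\sqrt n)$ into $M/\sqrt n$.

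\emph{Main obstacle.} Beyond the (now clean) Grönwall step, the two points that need care are: (i) the uniform estimate $\sup_{t,i}|\mathcal E_i(t)|=\oo(1/\sqrt n)$ for the quantile-discretization error, in particular near the soft edges of $\supp\nu_t$, where $\rho_t$ may vanish and the quantile spacing is large, so that the Riemann-sum comparison must be carried out with some care; and (ii) closing the Grönwall/martingale loop, which one does by a stopping-time argument at the first time $R(t)$ exceeds the target — on that event $\langle\mathcal N\rangle$ stays $\OO(1/n)$, so the exit has probability $\OO(e^{-c\beta M^2})\to0$ as $M\to\infty$.
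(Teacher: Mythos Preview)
Your proposal is sound in its architecture and would prove a version of the proposition, but it differs from the paper in two concrete ways, one of which costs you a logarithmic factor.

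\textbf{The paper bypasses the log-sum-exp surrogate.} Rather than smoothing the maximum by $\Psi_q$, the paper tracks the running argmax $i_*(t)=\arg\max_i(x_i(t)-\gamma_i(t))$ directly. Since the $x_i$ never collide for $\beta\ge1$, $i_*(t)$ is piecewise constant in $t$, so one may simply plug $i=i_*(t)$ into your difference SDE. At the argmax every summand of the interaction term has the correct sign and the whole sum is $\le0$, giving
\[
\rd\big(x_{i_*(t)}(t)-\gamma_{i_*(t)}(t)\big)\le \frac{\rd W_{i_*(t)}(t)}{\sqrt{\beta n}}+K\big(x_{i_*(t)}-\gamma_{i_*(t)}\big)\,\rd t+\OO(n^{-1/2})\,\rd t,
\]
after which Gr\"onwall and Doob's inequality (the process $t\mapsto W_{i_*(t)}(t)$ is a standard Brownian motion) finish. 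This avoids the parameter $q$, the It\^o correction $q/(2\beta n)$, and the balancing step. Your route is a legitimate alternative, but the optimization $q\asymp\sqrt{n\log n}$ leaves deterministic terms of size $\sqrt{\log n}/\sqrt n$, which cannot be absorbed into $M/\sqrt n$ \emph{uniformly in $n$} for a fixed $M$; you end up proving the bound with $M$ replaced by $C\sqrt{\log n}+M$. This is harmless for the downstream application (one only needs $\max_i|x_i-\gamma_i|=\oo_n(1)$ with high probability), but it does not quite match the proposition as stated. The argmax trick removes this loss entirely.

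\textbf{The discretization error is only $\OO(1/\sqrt n)$, not $\OO(\log n/n)$.} Your midpoint-Riemann heuristic is valid in the bulk but fails near the soft edges of $\supp\nu_t$, where the quantile gaps are not $\OO(1/n)$. The paper does not attempt to get $\OO(\log n/n)$; it proves $\sup_{t,i}|\mathcal E_i(t)|=\OO(1/\sqrt n)$ by a dichotomy on whether $\rho_t(\gamma_i(t))$ is below or above a threshold $\fC/\sqrt n$. In the small-density case the nearest quantiles are at distance $\gtrsim1/\sqrt n$ and one bounds the principal-value integral and the discrete sum over a window of that size separately; in the large-density case one Taylor-expands the quantile spacing and controls the symmetric cancellations. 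This $\OO(1/\sqrt n)$ is exactly the scale of the noise and is all that is needed; you correctly flag the edge as the obstacle, but you should not assert the sharper rate without carrying out this analysis.
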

\begin{remark}\label{hilberts} Note that if $\nu$ is a probability measure with density $\rho$ which is $\cC^k_b$ and with compact support, then $H\nu$ is $\cC^{k-1}_b$. Indeed, for $\rho$ with support in $[-A,A]$ and $|x|\le M$
$$H\nu(x)=P.V. \int\frac{\rho(y)}{x-y} \d y=P.V. \int_{|x-y|\le A+M} \frac{\rho(y)}{x-y} \d y= \int_{|x-y|\le A+M} \frac{\rho(y)-\rho(x)}{x-y}\d y,$$
where we noticed that $ P.V. \int_{|x-y|\le A+M} \frac{1}{x-y} \d y=0$. 
\end{remark}
\begin{proof}[Proof of Proposition \ref{p:lowerbound2}]
We first show that the $(1/n)$-quantiles of $ \rho_t$ approximately satisfy the equations of Dyson  Brownian motion:
\begin{align}\label{e:apDBM}
\del_t \gamma_i(t)=\frac{1}{2n}\sum_{j:j\neq i}\frac{1}{\gamma_i(t)-\gamma_j(t)}+\del_x  k_t(\gamma_i(t))+\OO\left(\frac{1}{\sqrt{n}}\right), \quad 1\leq i\leq n.
\end{align}
In the following, we denote in short $H(\rho)$ for $H(\rho(x)\d x)$.
By taking derivative with respect to $t$ on both sides of \eqref{e:quant}, we have
\begin{align*}\begin{split}
0&=\del_t \gamma_i(t) \rho_t(\gamma_i(t))+\int^{\gamma_i(t)}\del_t \rho_t(x)\rd x\\
&=\del_t \gamma_i(t) \rho_t(\gamma_i(t))-\int^{\gamma_i(t)}\del_x(\rho_t(H( \rho_t)/2+\del_x k_t(x))) \rd x\\
&=\del_t \gamma_i(t) \rho_t(\gamma_i(t))-\rho_t(\gamma_i(t))(H( \rho_t(\gamma_i(t)))/2+\del_x k_t(\gamma_i(t)) ).
\end{split}\end{align*}
By rearranging the above equality, we obtain the following differential equation for the quantiles of $ \rho_t(x)$
\begin{align}\label{e:gammaeq}
\del_t \gamma_i(t)=\frac{1}{2}H( \rho_t)(\gamma_i(t))+\del_x k_t(\gamma_i(t)). 
\end{align} 
In the following, we prove that we can approximate the Hilbert transform by a discrete sum
\begin{align}\label{e:apHib}
H( \rho_t)(\gamma_i(t))=\frac{1}{n}\sum_{j:j\neq i}\frac{1}{\gamma_i(t)-\gamma_j(t)}+\OO\left(\frac{1}{\sqrt{n}}\right).
\end{align}
Then the proof of  \eqref{e:apDBM} follows from combining \eqref{e:gammaeq} and \eqref{e:apHib}. We fix a large constant $\fC>0$ (which will be chosen later), and divide \eqref{e:apHib} into two cases: either $ \rho_t(\gamma_i(t))\leq \fC/\sqrt n$ or $ \rho_t(\gamma_i(t))\geq \fC/\sqrt n$.

In the following we first discuss the case that $ \rho_t(\gamma_i(t))\leq \fC/\sqrt n$. Since our density $ \rho_t(x)$ is smooth, in particular its first derivative is uniformly bounded $\| \rho_t'\|_\infty\lesssim 1$.   For   $\fc\le \| \rho_t'\|_\infty^{-1} \fC$,
 $\rho_t(x)\leq 2\fC/\sqrt n$ on  $[\gamma_i(t)-\fc/\sqrt n, \gamma_i(t)+\fc/\sqrt n]$. If we take also $\fc\leq 1/4\fC$, we deduce that $\gamma_{i-1}(t)\leq \gamma_i(t)-\fc/\sqrt n$ and $\gamma_{i+1}(t)\geq \gamma_i(t)+\fc/\sqrt n$ (we made the convention $\gamma_0=-\infty$ and $\gamma_{n+1}=+\infty$). Then 
\begin{align}\begin{split}\label{e:diff1}
P.V. \int_{\gamma_{i-1}(t)}^{{\gamma_{i+1}}(t)}\frac{ \rho_t(x)}{\gamma_i(t)-x}\rd x
&=P.V. \int_{\gamma_{i}(t)-\fc/\sqrt n}^{{\gamma_i(t)}+\fc/\sqrt n}\frac{ \rho_t(x)}{\gamma_i(t)-x}\rd x+\OO\left(\frac{1}{\fc\sqrt n}\right)\\
&=P.V. \int_{\gamma_{i}(t)-\fc/\sqrt n}^{{\gamma_i(t)}+\fc/\sqrt n}\frac{ \rho_t(x)- \rho_t(\gamma_i(t))}{\gamma_i(t)-x}\rd x+\OO\left(\frac{1}{\fc\sqrt n}\right)\\
&=\OO\left(\frac{\fc\|\rho_t'\|_\infty}{\sqrt n}+\frac{1}{\fc\sqrt n}\right)=\OO\left(\frac{1}{\sqrt n}\right),
\end{split}\end{align}
where we used  in the  second  line that $P.V.\int_{\gamma_{i}(t)-\fc/\sqrt n}^{{\gamma_i(t)}+\fc/\sqrt n} (\gamma_{i}(t)-x)^{-1}\rd x =0$.
For the integral outside the interval $[\gamma_{i-1}(t), \gamma_{i+1}(t)]$, we have the trivial bounds
\begin{align*}
\int_{-\infty}^{\gamma_{i-1}(t)}\frac{\rho_t(x)\rd x}{\gamma_i(t)-x}
\leq \sum_{j=1}^{i-1}\int_{\gamma_{j-1}(t)}^{\gamma_{j}(t)}\frac{\rho_t(x)\rd x}{\gamma_i(t)-\gamma_j(t)}\leq \sum_{j=1}^{i-1}\frac{1}{n(\gamma_i(t)-\gamma_j(t))},
\end{align*}
and the lower bound
\begin{align*}
\int_{-\infty}^{\gamma_{i-1}(t)}\frac{\rho_t(x)\rd x}{\gamma_i(t)-x}
\geq \sum_{j=1}^{i-1}\int_{\gamma_{j-1}(t)}^{\gamma_{j}(t)}\frac{\rho_t(x)\rd x}{\gamma_i(t)-\gamma_{j-1}(t)}\geq \sum_{j=1}^{i-2}\frac{1}{n(\gamma_i(t)-\gamma_j(t))}.
\end{align*}
Thus we conclude that
\begin{align}\label{e:diff2}
\left|\int_{-\infty}^{\gamma_{i-1}(t)}\frac{\rho_t(x)\rd x}{\gamma_i(t)-x}-\sum_{j=1}^{i-1}\frac{1}{n(\gamma_i(t)-\gamma_j(t))}\right|
\leq \frac{1}{n(\gamma_i(t)-\gamma_{i-1}(t))}\leq \frac{1}{\fc \sqrt n}.
\end{align}
We have the same estimate for the integral from $\gamma_{i+1}(t)$ to $\infty$. The claim \eqref{e:apHib} follows from combining \eqref{e:diff1} and \eqref{e:diff2}.

For the case that $ \rho_t(\gamma_i(t))\geq \fC/\sqrt n$, we have for any integer $k$,
\begin{align*}\begin{split}
\frac{k}{n}=\int_{\gamma_i(t)}^{\gamma_{i+k}(t)} \rho_t(x)\rd x
&=\int_{\gamma_i(t)}^{\gamma_{i+k}(t)}( \rho_t(\gamma_i(t))+\OO(\| \rho'_t\|_\infty |x-\gamma_i(t)|))\rd x\\
&=(\gamma_{i+k}(t)-\gamma_i(t)) \rho_t(\gamma_i(t))+\OO(\| \rho'_t\|_\infty |(\gamma_{i+k}(t)-\gamma_i(t))|^2).\nonumber
\end{split}\end{align*}
By rearranging, we get 
\begin{align}\begin{split}\label{e:loc}
\gamma_{i+k}(t)-\gamma_i(t)&=\frac{k}{n \rho_t(\gamma_i(t))}\frac{1}{1+\OO(\| \rho'_t\|_\infty |(\gamma_{i+k}(t)-\gamma_i(t))| / \rho_t(\gamma_i(t))}\\
&=\frac{k}{n \rho_t(\gamma_i(t))}\left(1+\OO\left(\frac{ k\|  \rho'_t\|_\infty}{ n  \rho_t(\gamma_i(t))^2}\right)\right),
\end{split}\end{align}
provided that 
$k\| \rho'_t\|_\infty \leq  n \rho_t(\gamma_i(t))^2/2$. We have exactly the same estimates for $\gamma_{i}(t)-\gamma_{i-k}(t)$, and 
\begin{align}\label{e:gigk}
\frac{1}{n}\left|\frac{1}{\gamma_{i}(t)-\gamma_{i-k}(t)}+\frac{1}{\gamma_{i}(t)-\gamma_{i+k}(t)}\right|
\lesssim \frac{1}{n}\frac{\OO\left(\frac{ k\|  \rho'_t\|_\infty}{ n  \rho_t(\gamma_i(t))^2}\right)}{\frac{k}{n \rho_t(\gamma_i(t))}}
\lesssim \frac{\|  \rho'_t\|_\infty}{n \rho_t(\gamma_i(t))}.
\end{align}

We take 
\begin{align}\label{e:defd}
d=\left\lfloor \frac{\sqrt n  \rho_t(\gamma_i(t))}{\fC}\right\rfloor +1,
\end{align}
then $d\geq 1$ and 
\begin{align*}
d\| \rho'_t\|_\infty \leq \| \rho'_t\|_\infty+\frac{\| \rho'_t\|_\infty\sqrt n  \rho_t(\gamma_i(t))}{\fC}\leq  n \rho_t(\gamma_i(t))^2/2,
\end{align*}
provided we take $\fC$ large enough. 
By summing over \eqref{e:gigk} from $k=1$ to $k=d-1$, we have
\begin{align}\label{e:diff3}
\sum_{ k=1}^{d-1}\frac{1}{n}\left(\frac{1}{\gamma_{i}(t)-\gamma_{i-k}(t)}+\frac{1}{\gamma_{i}(t)-\gamma_{i+k}(t)}\right)
\lesssim \frac{d\|  \rho'_t\|_\infty}{n \rho_t(\gamma_i(t))}\lesssim \frac{1}{\sqrt n}.
\end{align}
Moreover, by taking $k=d$ in \eqref{e:loc}, we have 
$
|\gamma_{i+d}(t)-\gamma_i(t)|, |\gamma_{i}(t)-\gamma_{i-d}(t)|\asymp 1/\sqrt n.
$
The same argument as for \eqref{e:diff1} gives
\begin{align}\begin{split}\label{e:diff4}
P.V. \int_{\gamma_{i-d}(t)}^{{\gamma_{i+d}}(t)}\frac{ \rho_t(x)}{\gamma_i(t)-x}\rd x
=\OO\left(\frac{1}{\sqrt n}\right).
\end{split}\end{align}
Similarly, following the proof of  
 \eqref{e:diff2}, we get
\begin{align}\label{e:diff5}
\left|\int_{-\infty}^{\gamma_{i-d}(t)}\frac{ \rho_t(x)\rd x}{\gamma_i(t)-x}-\sum_{j=1}^{i-d}\frac{1}{n(\gamma_i(t)-\gamma_{j}(t)}\right|
\leq \frac{1}{n(\gamma_i(t)-\gamma_{i-d}(t))}\lesssim \frac{1}{ \sqrt n}.
\end{align}
The claim \eqref{e:apHib} follows from combining \eqref{e:diff3}, \eqref{e:diff4} and \eqref{e:diff5}.

By taking  the difference of \eqref{e:DBMk} and \eqref{e:apDBM}, we get
\begin{align}\label{e:dTt}\begin{split}
\rd ( x_i(t)-\gamma_i(t))
&=\frac{\rd W_i(t)}{\sqrt {\beta n}}-\frac{1}{2n}\sum_{j:j\neq i} \frac{( x_i(t)-\gamma_i(t))-( x_j(t)-\gamma_j(t))}{( x_i(t)- x_j(t))(\gamma_i(t)- \gamma_j(t))}\d t\\
&+(\del_x  k_t( x_i(t))-\del_x  k_t(\gamma_i(t))) \d t,\quad 1\leq i\leq n.
\end{split}\end{align}
We take $i_*(t)=\arg\max_{i\in\qq{1,n}} ( x_i(t)-\gamma_i(t))$. Since $i_*(t)$ is piecewise constant, $\rd i_*(t)$ is almost surely zero. By plugging $i=i_*(t)$ in \eqref{e:dTt}, and noticing that  the second term on the righthand side is nonpositive, we get
\begin{align*}
\rd ( x_{i_*(t)}(t)-\gamma_{i_*(t)}(t))\leq \frac{\rd W_{i_*(t)}(t)}{\sqrt {\beta n}}+K( x_{i_*(t)}(t)-\gamma_{i_*(t)}(t)) \d t.
\end{align*}
The martingale term $\rd W_{i_*(t)}(t)$ has the same law as a standard Brownian motion.
By Gronwall's inequality, we deduce that
\begin{align*}
 x_{i_*(t)}(t)-\gamma_{i_*(t)}(t)
&\leq e^{Kt}\left( x_{i_*(0)}(0)-\gamma_{i_*(0)}(0)+\int_0^t{{e^{-Ks}}}\frac{\rd W_{i_*(s)}(s)}{\sqrt {\beta n}}\right)\\
&\leq e^{Kt}\left(\max_i| x_{i}(0)-\gamma_{i}(0)|+\frac{M}{\sqrt n}\right),
\end{align*}
where $M$ is a stochastically bounded random variable, uniformly in time (by Doob's martingale inequality).  
It follows that uniformly for any $i\in \qq{1,n}$, 
\begin{align*}
 x_{i}(t)-\gamma_{i}(t)\lesssim e^{Kt}\left(\max_i| x_i(0)-\gamma_i(0)|+\frac{M}{\sqrt n}\right).
\end{align*}
By the same argument, we have a similar lower bound by considering  $i_*(t)=\arg\min_{i\in\qq{1,n}} ( x_i(t)-\gamma_i(t))$. The following holds
\begin{align*}
\gamma_{i}(t)- x_{i}(t)\lesssim e^{Kt}\left(\max_i| x_i(0)-\gamma_i(0)|+\frac{M}{\sqrt n}\right),
\end{align*}
where $M$ is stochastically bounded. This finishes the proof of Proposition \ref{p:lowerbound2}.
\end{proof}

\begin{proof}[Proof of Proposition \ref{p:lowerb}] We first show that changing slightly the initial condition of the Dyson Brownian motion will not change much the large deviations lower bound. This will enable us to consider an initial measure $\tilde\mu_0$ with compact support and finite free entropy.
Let $\mu_0$ be a probability measure and $\tilde\mu_0$ a compactly supported approximation so that $d(\mu_0,\tilde\mu_0)\le\delta/3$. Denote $\tilde F_0(x)=\tilde\mu_0((-\infty,x])$. We construct a new family of initial data 
\begin{align*}
\tilde\nu_0^n=\frac{1}{n}\sum_{i=1}^n \delta_{\tilde x_i(0)},\quad \tilde x_i(0)=\tilde F_0^{-1}((i-1/2)/n).
\end{align*}
In this way $\tilde x_i(0)$ is the $(i-1/2)/n$ quantile of $\tilde \rho_0$. Thanks to Assumption \ref{a:mu0}, we have that 
\begin{align*}
d(\tilde \nu_0^n, \nu_0^n)=\sqrt{\frac{1}{n}\sum_{1\leq i\leq n} |\tilde x_i(0)-x_i(0)|^2}\leq \frac{\delta}{3}+\oo_n(1).
\end{align*}
We consider the Dyson  Brownian motion starting from $\tilde \nu_0^n$,
\begin{align}\label{e:tDBM}
\rd \tilde x_i(t)=\frac{\rd W_i(t)}{\sqrt {\beta n}}+\frac{1}{2n}\left(\sum_{j:j \neq i}\frac{1}{\tilde x_i(t)-\tilde x_j(t)}\right)\d t,
\end{align}
which shares the same Brownian motions $W_i$  as \eqref{e:DBM}. By taking the difference between \eqref{e:DBMa} and \eqref{e:tDBM}, we get
\begin{align}\label{e:difeq}
\del_t (\tilde x_i(t)-x_i(t))^2=\frac{(\tilde x_i(t)-x_i(t))}{n}\sum_{j:j \neq i}\frac{-(\tilde x_i(t)-x_i(t))+(\tilde x_j(t)-x_j(t))}{(\tilde x_i(t)-\tilde x_j(t))( x_i(t)- x_j(t))}.
\end{align}
Averaging over all the indices $i\in \qq{1, n}$, we find
\begin{align*}
\frac{1}{n}\del_t\sum_i (\tilde x_i(t)-x_i(t))^2=-\frac{1}{n^2}\sum_{i<j}\frac{((\tilde x_i(t)-x_i(t))-(\tilde x_j(t)-x_j(t)))^2}{(\tilde x_i(t)-\tilde x_j(t))( x_i(t)- x_j(t))}\leq 0.
\end{align*}
It follows that if we let $\tilde\nu_t^n=\frac{1}{n}\sum_{i=1}^n \delta_{\tilde x_i(t)}$, we have 
\begin{align*}{
d(\tilde \nu_t^n, \nu_t^n)}=\sqrt{\frac{1}{n}\sum_i (\tilde x_i(t)-x_i(t))^2}\leq \sqrt{\frac{1}{n}\sum_i (\tilde x_i(0)-x_i(0))^2}\leq \delta/2,
\end{align*}
provided $n$ is large enough.
As a consequence, we deduce that for any  compactly supported measure-valued process $\{\tilde\nu_t(x)\}_{0\leq t\leq 1}$
  with a smooth density  such that $\tilde\nu_0=\tilde\mu_0$ as in Proposition  \ref{p:lowerb}
\begin{align}\begin{split}\label{e:replace1}
&\phantom{{}={}}\lim_{n\rightarrow\infty}\frac{1}{n^2}\log\bP(\{\nu^n_t\}_{0\leq t\leq 1}\in \bB(\{\tilde \nu_t\}_{0\leq t\leq 1}, \delta))\geq \lim_{n\rightarrow\infty}\frac{1}{n^2}\log\bP(\{\tilde \nu^n_t\}_{0\leq t\leq 1}\in \bB(\{\tilde \nu_t\}_{0\leq t\leq 1}, \delta/2)).
\end{split}\end{align}
Let $\del_x\tilde k_t$ denote the drift  \eqref{driftk}
and  $\bQ^{\beta \tilde k}$ be the distribution
\begin{align*}
\bQ^{\beta \tilde k}=e^{n^2S^n(\{\tilde \nu^n_t, \beta \tilde k_t)\}_{0\leq t\leq 1}} \bQ,
\end{align*}
where $S^n(\{\tilde \nu^n_t, \beta \tilde k_t)\}_{0\leq t\leq 1})$
\begin{align}
S^n(\{\tilde \nu^n_t, \beta \tilde k_t)\}_{0\leq t\leq 1})
&=\frac{1}{n^2}\left(\frac{n L_1^{\beta \tilde k}}{2}-\frac{n^2}{2}\langle L^{\beta \tilde k}, L^{\beta \tilde k}\rangle_1\right),
\quad 
\rd L^{\beta \tilde k}_t=\sum_{i}\beta \del_x\tilde k_t(x_i(t))\frac{\rd W_i(t)}{\sqrt {\beta n}},
\end{align}
is defined in \eqref{e:MLt0} and \eqref{e:defSn}. Because $\del_x \tilde  k_t(x)$  is continuously differentiable and bounded, this is a well defined change of measure.
By Girsanov's formula, 
 under $\bQ^{\tilde k}$, the measure valued process $\{\tilde \nu^{n}_t\}_{0\leq t\leq 1}$ has the same law as the empirical distribution of 
\begin{align}\begin{split}\label{e:DBM3}
\rd \tilde x_i(t)&=\frac{\rd W_i(t)}{\sqrt {\beta n}}+\frac{1}{2n}\left(\sum_{j: j\neq i}\frac{1}{\tilde x_i(t)-\tilde x_j(t)}\right)\d t+\langle \rd L^{\beta \tilde k}, \rd \tilde x_i\rangle_t \\
&=\frac{\rd W_i(t)}{\sqrt {\beta n}}+\frac{1}{2n}\left(\sum_{j: j\neq i}\frac{1}{\tilde x_i(t)-\tilde x_j(t)}\right)\d t+\del_x \tilde k_t(\tilde x_i) \d t.
\end{split}\end{align}

Thanks to Proposition \ref{p:lowerbound2},  it holds that  
\begin{align}\label{e:xkbb}
\sup_{t\in [0,1]}\max_{1\le k\le n}|\tilde x_k(t)-\tilde \gamma_k(t)|\lesssim e^{Kt}\left(\max_{1\le k\le n}( |\tilde x_k(0)-\tilde \gamma_k(0)|)+\frac{M}{\sqrt n}\right),
\end{align}
where the constant $K$ depends on the Lipschitz constant of $\tilde \del_x k_t$, and $M$ is stochastically bounded.  Especially, \eqref{e:xkbb} implies that
$\bQ^{\tilde k}(\{\tilde \nu^{n}_t\}_{0\leq t\leq 1}\in \bB(\{\tilde \nu_t\}_{0\leq t\leq 1}, \delta/2))$ with probability $1-\oo(1)$.
We  conclude that
\begin{align}\begin{split}\label{e:replace2}
\bQ(\{\tilde \nu^{n}_t\}_{0\leq t\leq 1}\in \bB(\{\tilde \nu_t\}_{0\leq t\leq 1}, \delta/2))
&=\bQ^{\beta \tilde k}( e^{-n^2 S^n(\{\tilde \nu^n_t, \beta \tilde k_t\}_{0\leq t\leq 1})}\bm1(\{\tilde \nu^{n}_t\}_{0\leq t\leq 1}\in \bB(\{\tilde \nu_t\}_{0\leq t\leq 1}, \delta/2)))\\
&=\exp\{-n^2(S_{\tilde \mu_0}(\tilde \nu_t)+\oo_\delta(1))\}\bQ^{\beta\tilde k}( \{\tilde \nu^{n}_t\}_{0\leq t\leq 1}\in \bB(\{\tilde \nu_t\}_{0\leq t\leq 1}, \delta/2)\\
&=\exp\{-n^2(S_{\tilde \mu_0}(\tilde \nu_t)+\oo_\delta(1))\}(1-\oo(1)),
\end{split}\end{align}
where in the second line, we used that $S^n(\{\tilde \nu^n_t, \beta \tilde k_t\}_{0\leq t\leq 1})=S_{\tilde \mu_0}(\tilde \nu_t)+\oo_\delta(1)$ for $\{\tilde \nu^{n}_t\}_{0\leq t\leq 1}$ in $ \bB(\{\tilde \nu_t\}_{0\leq t\leq 1}, \delta/2)$ by continuity of
$\nu\rightarrow S^n(\{ \nu_t, \beta \tilde k_t\}_{0\leq t\leq 1})$ and \eqref{e:minimizereq}.
This and \eqref{e:replace1} together finish the proof of \eqref{e:DBMlow}.
\end{proof}

\section{Large deviations for the  Dyson Bessel process}\label{secbessel}

In this section, we prove the  large deviation principle, Theorem \ref{main2},  for the symmetrized  empirical particle density of the Dyson Bessel process
\begin{align}\begin{split}\label{e:dskccopy}
\d s_i(t) 
&=\frac{\d W_{i}}{\sqrt{\beta n}}+\left(\frac{1}{2n}\sum_{j: j \neq i}\frac{1}{s_i(t)-s_j(t)}+\frac{1}{2n}\sum_{j: j\neq i}\frac{1}{s_i(t)+s_j(t)}+\frac{\al_n}{2 s_i(t)}\right)\d t,\quad 1\leq i\leq n,
\end{split}\end{align}
We can symmetrize the Dyson Bessel process \eqref{e:dskccopy}, by setting $s_{-i}(t)=-s_i(t), W_{-i}(t)=-W_i(t)$ for $1\leq i\leq n$, then for $ i\in \qq{-n,n}\setminus \{0\}$, we have 
\begin{align}\begin{split}\label{e:dsk2}
\d s_i(t) 
&=\frac{\d W_{i}(t)}{\sqrt{\beta n}}+\left(\frac{1}{2n}\sum_{j: j \neq \pm i}\frac{1}{s_i(t)-s_j(t)}+\frac{\al_n}{2 s_i(t)}\right)\d t,\\
&=\frac{\d W_{i}(t)}{\sqrt{\beta n}}+\frac{1}{2n}\sum_{j:j\neq i}\frac{\rd t}{s_i(t)-s_j(t)}+\frac{\alpha_n-1/(2n)}{2 s_i(t)}\d t 
\end{split}\end{align}
where in the last line we added a term $(1/2n)\rd t/(s_i(t)-s_j(t))$ with $j=i$, and replaced $\al_n$ by $\al_n-(1/2n)$.

We denote the law of the Dyson Bessel process by $\bP$, and the  empirical particle density and its symmetrized version as 
\begin{align}\label{e:density}
\nu^n_t=\frac{1}{n}\sum_{i=1}^n \delta_{ s_i(t)},\quad \hat\nu^n_t=\frac{1}{2n}\sum_{i=1}^n\left( \delta_{ s_i(t)}+\delta_{ -s_i(t)}\right),\quad 0\leq t\leq 1.
\end{align}
More generally, for a probability measure $\nu$ on the real line we set $\hat \nu$ to be its symmetrized version $\hat\nu(f)=\int (f(x)+f(-x))/2 \d\nu$. Reciprocally, if $\nu$ is a probability on $(0,\infty)$, we can retrieve $\nu$ from $\hat \nu$ by setting $\nu=2\hat\nu|_{(0,+\infty)}$.
If $\alpha_n\geq 1/n\beta$, the solution of \eqref{e:dskccopy} for $t>0$ is non negative  almost surelyand thus $\nu_t^n=2\hat\nu^n_t|_{(0,+\infty)}$. We denote  by $\mathbb M_1^s(\mathbb R)$ the set of symmetric probability measures on the real line and observe that it is a closed subset of $\mathbb M_1(\mathbb R)$.

We recall from \eqref{e:ratt} that given a symmetric measure $\hat \mu_0$ and a continuous symmetric measure-valued  process $\{\hat\nu_t\}_{0\leq t\leq 1}$ with $\hat\nu_0=\hat \mu_0$, we define the following dynamical free entropy: 
\begin{equation}\label{e:ratesingular}
S_{\hat \mu_0}^\al(\{\hat\nu_t\}_{0\leq t\leq 1})=\sup_{f\in \cC^{2,1}_b} S^\al(\{\hat\nu_t,f_t\}_{0\leq t\leq 1}),
\end{equation}
where
\begin{align*}
S^\al(\{\hat\nu_t,f_t\}_{0\leq t\leq 1})
&= L^\alpha_1(\{\hat\nu_t,f_t\}_{0\leq t\leq 1})-\frac{1}{8\beta }\int^1_0 \int (f_s'(x)-f_s'(-x))^2\rd \hat\nu_s(x)\rd s\\
L^\alpha_u(\{\hat\nu_t,f_t\}_{0\leq t\leq 1})&=
\hat\nu_u(f_u)-\hat\nu_0(f_0)-\int_0^u\int \del_s f_s(x) \rd\hat\nu_s(x)\rd s-\frac{1}{2}\int_0^u\int \frac{f_s'(x)-f_s'(y)}{x-y}\rd \hat\nu_s(x)\rd \hat\nu_s(y)\rd s \\
& -\frac{\alpha}{2}\int_0^u\int \frac{f_s'(x)}{x} \rd \hat\nu_s(x)\rd s  .\end{align*}
The supremum is taken over $f_t(x)\in \cC_b^{2,1}(\mathbb R\times [0,1])$ which has bounded twice derivative in $x$ and bounded derivative in $t$. 
We notice that since the measure process $\hat\nu_t$ is symmetric, for any $f_t(x)\in \cC_b^{2,1}(\mathbb R\times [0,1])$, it holds
\begin{align*}
S^\al(\{\hat\nu_t,(f_t(x)+f_t(-x))/2\}_{0\leq t\leq 1})=S^\al(\{\hat\nu_t,f_t(x)\}_{0\leq t\leq 1}).
\end{align*}
Therefore,  the optimization problem \eqref{e:ratesingular} can be restricted to the set of even functions, i.e. $f_t(x)=f_t(-x)$.  If $\hat\nu_t$ is not symmetric, or $\hat\nu_0\neq \hat\mu_0$, we simply set $S_{\hat\mu_0}^\al(\{\hat\nu_t\}_{0\leq t\leq 1})=\infty$.

In this section we prove  Theorem \ref{main2}. We study the properties of the rate function \eqref{e:ratesingular} and its relation to rectangular free convolution in Sections \ref{s:rrt} and \ref{s:frconv}. In Section \ref{s:ito}, we derive dynamical equations of linear statistics of Dyson Bessel process for general test function using It\^o's formula. We prove the large deviations upper bound in Section \ref{s:ldup}, and the large deviations lower bound in Section \ref{s:lddown}. Finally we give the proof of Theorem \ref{main2} in Section \ref{s:pmain2}.

%

\subsection{Study of the Rate Function}\label{s:rrt}
In this section, we study the rate function $S_{\hat\mu_0}^\al(\{\hat\nu_t\}_{0\leq t\leq 1})$ as defined in \eqref{e:ratesingular}.  
If $S^\al_{\hat\mu_0}(\{\hat\nu_t\}_{0\leq t\leq 1})<\infty$, then $\hat\nu_0=\hat\mu_0$, and
by Riesz representation theorem, for any measure process $\{\hat\nu_t\}_{0\leq t\leq 1}$, there exists a measurable function $\del_x k_t(x)$, such that for any $f\in \cC_b^{2,1}([0,1]\times \bR)$,
\begin{align}\label{e:testfb}\begin{split}
\int_0^1 \int f'_s(x) \del_x k_s(x)\rd \hat\nu_s \rd s = L_1^\alpha(\{\hat\nu_t,f_t\}_{0\le t\le 1})&=
\hat\nu_1(f_1)-\hat\nu_0(f_0)-\int \del_s f_s(x) \rd\hat\nu_s(x)\rd s\\
&-\int_0^1\int f_s'(x)(H(\hat\nu_s)+(\alpha/2) H(\delta_0))\rd \hat\nu_s(x)\rd s.
\end{split}\end{align}
Since $\hat\nu_s$ is symmetric, it is necessary that $\del_x k_s(x)$ is an odd function.
With this notation, we can rewrite the rate function as
\begin{align}\begin{split}\label{e:testfb2}
S_{\hat\mu_0}^\al(\{\hat\nu_t\}_{0\leq t\leq 1})
&=\sup_{f\in \cC^{2,1}} \left\{\int_0^1 f'_s(x) \del_x k_s(x)\rd \hat\nu_s(x) \rd s-\frac{1}{8\beta}\int^1_0 \int (f_s'(x)-f_s'(-x))^2\rd \hat\nu_s(x)\rd s\right\}\\
&=\frac{\beta}{2}\int_0^1 \int \del_x k_s(x)^2\rd \hat\nu_s(x)\rd s,
\end{split}\end{align}
where the equality is achieved when $f'_t(x)-f'_t(-x)=2\beta \del_x k_t(x)$.

We next see as in Proposition \ref{p:rate} that  when the rate function $S_{\hat\mu_0}^\al(\{\hat\nu_t\}_{0\leq t\leq 1})$ is finite, the measure valued  process $\{\hat\nu_t\}_{0\leq t\leq 1}$ satisfies nice properties. 
\begin{proposition}\label{p:rate2}
Fix a symmetric probability density $\hat\mu_0$ with bounded second moment. $S^\al_{\hat\mu_0}$ is a good rate function on $\cC([0,1], \bM^s_1(\bR))$.  If $S^\al_{\hat\mu_0}(\{\hat\nu_t\}_{0\leq t\leq 1})$ is finite, and
 $\int \log|x| \rd\hat\nu_0$, and the free entropy $\Sigma(\hat\nu_0)$ are finite, 
then we have
\begin{enumerate}
\item There exists universal constant $\fC>0$ depends only on $\hat\mu_0$ and $S_{\hat\mu_0}(\{\hat\nu_t\}_{0\leq t\leq 1})$ such that the $L_2$ norms of $\hat\nu_t$ are uniformly bounded,
\begin{align}\label{e:L2norm2}
\int x^2 \rd \hat\nu_t(x)\leq \fC,
\end{align}
and if $\al>0$
\begin{align}\label{e:1x}
\int_0^1 \int\frac{\rd \hat\nu_t(x)}{x^2}\rd t\leq \fC,\quad \quad \int -\log |x|\rd \hat\nu_t \leq \fC. 
\end{align}

\item $\hat\nu_t$ has a density for almost  all $0\leq t\leq 1$, i.e. 
\begin{align*}
\frac{\rd \hat\nu_t(x)}{\rd x}=\hat\rho_t(x).
\end{align*}
\item We denote the velocity field $u_t=(H(\hat\nu_t)+(\al/2) H(\delta_0))+\del_x k_t$,
then it satisfies the conservation of mass equation 
\begin{align}\label{e:masseq2}
\del_t\hat\rho_t+\del_x(\hat\rho_t u_t)=0,\quad 0\leq t\leq 1,
\end{align}
in the sense of distribution. We can rewrite the dynamical entropy \eqref{e:ratesingular} as
\begin{align}\begin{split}\label{Sent2}
S_{\hat\mu_0}^\al(\{\hat\nu_t\}_{0\leq t\leq 1})&=\frac{\beta}{2}\left(
\int_0^1 \int u_s^2  \hat\rho_s(x)\rd x \rd s+\frac{\pi^2}{3}\int_0^1\int \hat \rho^3_s \rd s
+\frac{\alpha^2}{4}\int \frac{\hat\rho_s(x)}{x^2}\rd x\rd s\right.\\
&-\left.\left.\left(\Sigma(\hat\nu_t)+ \al \int \log |x|\rd\hat\nu_t(x)\right)\right|_{t=0}^1
\right).
\end{split}\end{align}
\end{enumerate} 
\end{proposition}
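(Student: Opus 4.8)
The plan is to follow the template of Proposition~\ref{p:rate}; the only genuinely new feature is the singular drift $\tfrac\al2 H(\delta_0)(x)=\tfrac{\al}{2x}$ and the associated bulk term $\tfrac{\al^2}{4}\int\hat\rho_s/x^2$ in \eqref{Sent2}, so most arguments are adapted verbatim.

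\emph{Goodness and the $L^2$ bound.} Restricting the supremum in \eqref{e:ratesingular} to \emph{even} $f\in\cC^{2,1}_b$ (legitimate, as noted right after \eqref{e:ratesingular}), each term of $S^\al(\{\hat\nu_t,f_t\})$ is a bounded and weakly continuous functional of $\{\hat\nu_t\}$ — in particular $f'(x)/x$ extends continuously through $x=0$ — so $S^\al_{\hat\mu_0}$ is lower semicontinuous, and goodness follows from exponential tightness (proved as in \cite[Thm.~2.4]{GZ3}, recalled in Section~\ref{secbessel}) together with \eqref{e:L2norm2}. For \eqref{e:L2norm2} I would plug the even test function $f_\varepsilon(x)=x^2/(1+\varepsilon x^2)$ into $S^\al(\{\hat\nu_s,f_\varepsilon\})$ on $[0,t]$ exactly as in the proof of Proposition~\ref{p:rate}; the additional contribution $-\tfrac\al2\int_0^t\!\int 2(1+\varepsilon x^2)^{-2}\rd\hat\nu_s\rd s$ is bounded by $\al t$ in absolute value, hence harmless, and Grönwall gives $\int x^2\rd\hat\nu_t\le\fC$.

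\emph{Items (ii) and (iii).} I would first prove these under the extra assumptions that $\hat\mu_0$ is compactly supported with density bounded above and below on its support (hence bounded away from $0$). Then $H(\hat\nu_s)$ and $x\mapsto\tfrac{\al}{2x}$ are bounded on $\supp\hat\nu_s$, and the optimal-control argument of \cite{MR2034487,guionnet2004addendum} applies with the Dyson drift $\tfrac12H(\hat\nu_s)+\del_xk_s$ replaced by $\tfrac12H(\hat\nu_s)+\tfrac{\al}{2x}+\del_xk_s$: $\hat\nu_s$ is absolutely continuous for a.e.\ $s$, and $u_s=H(\hat\nu_s)+\tfrac{\al}{2x}+\del_xk_s$ solves \eqref{e:masseq2} distributionally (the coefficient $1$ rather than $\tfrac12$ in front of $H(\hat\nu_s)$, already visible in \eqref{e:testfb}, is the usual effect of running $2n$ symmetrised particles with coupling $1/(2n)$). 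To obtain \eqref{Sent2} I would start from $S^\al_{\hat\mu_0}=\tfrac\beta2\int_0^1\!\int(\del_xk_s)^2\rd\hat\nu_s\rd s$ in \eqref{e:testfb2}, substitute $\del_xk_s=u_s-H(\hat\nu_s)-\tfrac{\al}{2x}$, expand the square, and use: $\int H(\hat\nu_s)^2\hat\rho_s\rd x=\tfrac{\pi^2}{3}\int\hat\rho_s^3\rd x$; $\tfrac{\rd}{\rd s}\Sigma(\hat\nu_s)=2\int u_sH(\hat\nu_s)\hat\rho_s\rd x$; $\tfrac{\rd}{\rd s}\int\log|x|\rd\hat\nu_s=\int\tfrac{u_s}{x}\hat\rho_s\rd x$; and the symmetry identity $\int\tfrac{H(\hat\nu_s)(x)}{x}\hat\rho_s(x)\rd x=0$, which holds because $H(\hat\nu_s)$ is odd and the resulting kernel $1/(x^2-y^2)$ is antisymmetric. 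This last identity is precisely what makes the $\al$-cross terms collapse into the boundary term $-\al\,(\int\log|x|\rd\hat\nu_t)\big|_0^1$, leaving the single extra bulk term $\tfrac{\al^2}{4}\int\hat\rho_s/x^2$, and proves \eqref{Sent2} in the ``nice'' case. To remove the extra assumptions I would run the regularisation scheme of Proposition~\ref{p:approximation}, symmetrised at each stage — using free convolution (or the rectangular free convolution of Section~\ref{s:frconv}), which preserves symmetry and, by the argument of \cite[Lemma~5.9]{cabanal2003discussions}, does not increase $S^\al$, followed by truncation to compact support and mollification while keeping $\int\hat\rho^\varepsilon_s/x^2$ and $\int\log|x|\rd\hat\nu^\varepsilon_s$ finite — and then pass to the limit using lower semicontinuity of $S^\al_{\hat\mu_0}$ and of the right side of \eqref{Sent2}, noting that $\Sigma(\hat\nu_t)$ and $\int\log|x|\rd\hat\nu_t$ are bounded above thanks to \eqref{e:L2norm2}.

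\emph{The bounds \eqref{e:1x}.} Once \eqref{Sent2} holds, all terms on its right side are finite, so $\int_0^1\!\int\hat\rho_s/x^2\,\rd s<\infty$, which is the first inequality in \eqref{e:1x} (with $\fC$ now allowed to depend also on $\al$); when $\al>1/\beta$ this also follows directly by inserting $f_\varepsilon(x)=-\tfrac12\log(x^2+\varepsilon)$ in $S^\al(\{\hat\nu_s,f_\varepsilon\})$, using $\int\tfrac{x}{x^2+\varepsilon}\rd\hat\nu_s=0$ to annihilate the cross term of the Vandermonde contribution, comparing $\tfrac{x^2}{(x^2+\varepsilon)^2}\le\tfrac1{x^2+\varepsilon}$, and letting $\varepsilon\downarrow0$. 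For the pointwise bound $\int-\log|x|\rd\hat\nu_t\le\fC$ I would use the same $f_\varepsilon$ in $S^\al(\{\hat\nu_s,f_\varepsilon\})$ restricted to $[0,t]$: the Vandermonde term is then $\tfrac\varepsilon2\int_0^t(\int\tfrac{\rd\hat\nu_s}{x^2+\varepsilon})^2\rd s\ge0$, the $\al$-term is $\tfrac\al2\int_0^t\!\int\tfrac{\rd\hat\nu_s}{x^2+\varepsilon}\rd s\ge0$, and $\tfrac1{2\beta}\int_0^t\!\int\tfrac{x^2}{(x^2+\varepsilon)^2}\rd\hat\nu_s\rd s\le\tfrac1{2\beta}\int_0^1\!\int\tfrac{\rd\hat\nu_s}{x^2}\rd s\le\tfrac{\fC}{2\beta}$, whence $-\tfrac12\int\log(x^2+\varepsilon)\rd\hat\nu_t\le S^\al_{\hat\mu_0}-\int\log|x|\rd\hat\nu_0+\tfrac{\fC}{2\beta}$; letting $\varepsilon\downarrow0$ (splitting $\{|x|\le1\}$ and $\{|x|>1\}$ and invoking monotone resp.\ dominated convergence) gives the claim.

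\emph{Main obstacle.} The delicate step is the passage from the ``nice'' case to general $\hat\mu_0$ in items (ii) and (iii): one must exhibit symmetry-preserving regularisations of $\{\hat\nu_t\}$ for which the singular drift $\tfrac{\al}{2x}$ does not inflate the dynamical entropy, which amounts to showing that the appropriate free-convolution operation decreases $S^\al$ in the presence of the $\tfrac\al2H(\delta_0)$ term and that an analogue of Biane's regularity statement for Hilbert transforms holds for the smoothed measures. A minor secondary point is that the direct test-function proof of the time-integrated $1/x^2$ bound has an unfavourable constant when $\al\le1/\beta$, which is why that bound is obtained from \eqref{Sent2} instead.
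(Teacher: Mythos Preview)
Your approach is broadly correct and shares the key algebraic identities with the paper (in particular the crucial symmetry cancellation $\int \tfrac{H(\hat\nu_s)(x)}{x}\hat\rho_s(x)\rd x=0$), but the overall architecture differs in an important way.

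\textbf{The paper's route.} The paper proves the $1/x^2$ bound \eqref{e:1x} \emph{first}, directly, before touching items (ii)--(iii). The point you almost had is that the test function should be scaled: with $f(x)=-\tfrac{\alpha\beta}{4}\log(\varepsilon+x^2)$ instead of your $-\tfrac12\log(x^2+\varepsilon)$, the quadratic penalty $-\tfrac{1}{8\beta}(f'(x)-f'(-x))^2$ picks up a factor $\alpha^2$, so that it is dominated by the drift term $-\tfrac{\alpha}{2}\int f'(x)/x$ for \emph{every} $\alpha>0$, not only $\alpha>1/\beta$. One then gets $\tfrac{\alpha^2\beta}{8}\int_0^1\!\int\tfrac{1}{\varepsilon+x^2}\rd\hat\nu_s\rd s\le S^\alpha_{\hat\mu_0}+\tfrac{\alpha\beta}{4}\int\log(\varepsilon+x^2)\rd\hat\nu_1 -\tfrac{\alpha\beta}{4}\int\log(\varepsilon+x^2)\rd\hat\nu_0$, and both inequalities in \eqref{e:1x} follow by letting $\varepsilon\downarrow 0$. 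With \eqref{e:1x} in hand, a two-line estimate (expand $(u_s-H(\hat\nu_s)-\tfrac{\alpha}{2x})^2$ and use $2ab\le\tfrac12 a^2+2b^2$) gives $\int_0^1\!\int(u_s-H(\hat\nu_s))^2\rd\hat\nu_s\rd s<\infty$. But this is precisely the hypothesis under which Proposition~\ref{p:rate} applies, so items (ii)--(iii) and the expansion \eqref{Sent2} follow \emph{immediately} from Proposition~\ref{p:rate} plus the same square-expansion you wrote down. No new regularisation scheme is needed.

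\textbf{Your route.} You instead try to establish (ii)--(iii) first in a ``nice'' case and then regularise, deducing \eqref{e:1x} only afterwards from \eqref{Sent2}. This can be made to work (the non-increase of $S^\alpha$ under rectangular free convolution is exactly Lemma~\ref{l:decrease}), but it is longer and has a soft spot: your nice-case hypothesis is only on $\hat\mu_0$, yet you need $\tfrac{\alpha}{2x}$ bounded on $\supp\hat\nu_s$ for all $s$, which does not follow from $\hat\mu_0$ being bounded away from $0$ and $S^\alpha<\infty$ alone. You would really need to assume---or arrange via the regularisation (as the paper in fact does later, in Section~\ref{s:lddown}, by shifting the support by $2\fa$)---that the whole process stays away from the origin, and then justify the limit in every term of \eqref{Sent2}, including the singular $\tfrac{\alpha^2}{4x^2}$ one. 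The paper's order of operations sidesteps all of this: one clean test-function computation reduces the Bessel case to the already-proven Dyson case.
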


\begin{remark}\label{r:unsym}
Under assumptions of Proposition \ref{p:rate2}, if $\hat\mu_0$ and $\hat\nu_t$ are symmetrization of measures $\mu_0$ and $\nu_t$ supported on $[0,\infty)$ respectively,
\begin{align}
\hat\mu_0(x)=(\mu_0(x)+\mu_0(-x))/2,\quad \hat \nu_t(x)=(\nu_t(x)+\nu_t(-x))/2,\quad 0\leq t\leq 1,
\end{align}
then for almost all $0\leq t\leq 1$, we have $\nu_t(x)=\rho_t(x)\rd x$, and it satisfies $\hat\rho_t(x)=(\rho_t(x)+\rho_t(-x))/2$ and
\begin{align*}
\del_t\rho_t(x)+\del_x(u_t(x)\rho_t(x))=0.
\end{align*}
In particular we have for almost all $0\leq t\leq 1$, $\hat \rho_t(x)=(\rho_t(x)+\rho_t(-x))/2$.
With $\rho_t$, we can rewrite \eqref{Sent} as
\begin{align*}
S_{\hat\mu_0}^\al(\{\hat\nu_t\}_{0\leq t\leq 1})&=\frac{\beta}{2}\left(
\int_0^1 \int u_s^2 \rd \rho_s \rd s+\frac{\pi^2}{12}\int_0^1\int  \rho^3_s \rd s
+\frac{\alpha^2}{4}\int \frac{\rho_s(x)}{x^2}\rd x\rd s\right.\\
&-\left.\left.\left(\Sigma((\nu_t(x)+\nu_t(-x))/2)+ \al \int \log |x|\rd\nu_t(x)\right)\right|_{t=0}^1
\right)=:S_{\mu_0}^\al(\{\nu_t\}_{0\leq t\leq 1}).
\end{align*}

\end{remark}

\begin{proof} 
The claim that $S^\al_{\hat\mu_0}$ is a good rate function follows from essentially the same arguments as in \cite[Theorem 1.4]{GZ3}. The fact that $S^\al_{\hat\mu_0}$ is lower semi-continuous comes from the continuity of 
$\{\hat\nu_t\}_{0\leq t\leq 1}\rightarrow  S^\al(\{\hat\nu_t,f_t\}_{0\leq t\leq 1})$ for any $\cC^{2,1}_b(\mathbb R\times [0,1])$ function. In fact, the only difference lies in the new term
$$-\frac{\alpha}{2}\int_0^1\int \frac{f_s'(x)}{x} \rd \hat\nu_s(x)\rd s,$$
which are continuous since $\hat\nu_s$ is even we can rewrite it as
\begin{align*}
-\frac{\alpha}{4}\int_0^1\int \frac{f_s'(x)-f_s'(-x)}{x} \rd \hat\nu_s(x)\rd s,
\end{align*}
and it no long has a singularity at $x=0$.  If $S^\al_{\hat\mu_0}(\{\hat\nu_t\}_{0\leq t\leq 1})$ is finite, by definition we have $\hat\nu_0=\hat\mu_0$.  To check
that the level set $\{S^\al_{\hat\mu_0}\le M\}$ is included in a compact set, 
one first checks that $\int x^2 \rd\hat\nu_t(x) $ is uniformly bounded as in Proposition \ref{p:rate}. Moreover if $f$ is in $\cC_b^{2,1}([0,1], \bR)$, \eqref{e:testfb} implies that
\begin{eqnarray*}
|\hat\nu_t(f)-\hat\nu_s(f)|&\le& C \|f''\|_\infty |t-s|+\int_s^t \int |\del_x k_s||f'(x)|\rd\hat\nu_s(x) \d s\\
&\le & C \|f''\|_\infty |t-s|+\|f'\|_\infty \left(\int_0^1 \int \rd\hat\nu_s(x)\int |\del_x k_s|^2 \rd s\right)^{1/2} \sqrt{t-s}\\\
&\le&  C \|f''\|_\infty |t-s|+\|f'\|_\infty \left(2M\right)^{1/2} \sqrt{t-s}\,.\end{eqnarray*}
This implies that $t\rightarrow \hat\nu_t(f)$ is tight by Arzela-Ascoli theorem. The conclusion follows.

The estimate \eqref{e:L2norm2} can be proven in the same way as \eqref{e:L2norm}. In the following we prove \eqref{e:1x}. We take a test function $f(x)=-(\al\beta/4)\ln (\varepsilon +x^2)$ in \eqref{e:ratesingular}, 
\begin{align}\begin{split}\label{e:tets}
&+\infty>S_{\hat\mu_0}^\al(\{\hat\nu_t\}_{0\leq t\leq 1}) = \hat\nu_1(f)-\hat\nu_0(f)+\frac{\al\beta}{4}\int_0^1\int \frac{x}{\varepsilon+x^2}H(\hat\nu_s)\rd \hat\nu_s(x)\rd s\\
&+\frac{\alpha^2\beta}{4}\int_0^1\int \frac{1}{\varepsilon+x^2}\rd \hat\nu_s(x)\rd s -\frac{\alpha^2\beta}{8}\int^1_0 \int \left(\frac{x}{\varepsilon+x^2}\right)^2\rd \hat\nu_s\rd s\\
& \geq \hat\nu_1(f)-\hat\nu_0(f)+\frac{\al\beta}{4}\int_0^1\int \frac{x}{\varepsilon+x^2}H(\hat\nu_s)\rd \hat\nu_s(x)\rd s
+\frac{\alpha^2\beta}{8}\int_0^1\int \frac{1}{\varepsilon+x^2}\rd \hat\nu_s(x)\rd s.
\end{split}\end{align}

By our assumption on $\hat\mu_0$, it holds that $\hat\mu_0(\log |x|)>-\infty$. Using the fact that $\hat\nu_s$ is symmetric, i.e. $\hat\nu_s(y)=\hat\nu_s(-y)$, we can rewrite the first integral on the righthand side of \eqref{e:tets} as
\begin{align}\begin{split}\label{e:2t}
&\phantom{{}={}}\int_0^1\int \frac{x}{\varepsilon+x^2}H(\hat\nu_s)\rd \hat\nu_s(x)\rd s
=\int_0^1\int \frac{x}{\varepsilon+x^2}\int \frac{\rd\hat\nu_s(y)}{x-y}\rd \hat\nu_s(x)\rd s\\
&
=\frac{1}{2}\int_0^1\int\left(\frac{x}{\varepsilon+x^2} -\frac{y}{\varepsilon+y^2}\right)\frac{\rd\hat\nu_s(y)}{x-y}\rd \hat\nu_s(x)\rd s
=\frac{1}{2}\int_0^1\int\frac{\varepsilon -xy}{(\varepsilon+x^2)(\varepsilon+y^2)} \rd\hat\nu_s(y)\rd \hat\nu_s(x)\rd s
\\
&=\frac{\varepsilon}{2}\int_0^1\left(\int \frac{1}{\varepsilon+x^2} \rd \hat\nu_s(x)\right)^2\rd s\geq 0.
\end{split}\end{align}
By plugging \eqref{e:2t} into \eqref{e:tets}, and rearranging, we conclude that
there exists a constant $\fC$ depending only on $\rho_0$ and $S_{\hat\mu_0}(\{\hat\nu_t\}_{0\leq t\leq 1})$,  such that
\begin{align*}
\frac{\alpha^2\beta}{8}\int_0^1\int \frac{1}{\varepsilon+x^2}\rd \hat\nu_s(x)\rd s\leq \fC+\int \ln(\varepsilon +x^2)\rd \hat\nu_1(x)\lesssim \fC,
\end{align*}
where we used \eqref{e:L2norm2} for the last inequality. 
Moreover, we also have that $\int \ln(\varepsilon+x^2)\rd \hat\nu_t(x)\gtrsim -\fC$.
The claim \eqref{e:1x} follows by sending $\varepsilon$ to $0$.

For Item (ii), let $u_t=(H(\hat\nu_t)+(\alpha/2) H(\delta_0))+\del_x k_t(x)$, then \eqref{e:testfb2} implies that 
\begin{align}\label{e:tgo}
S^\al(\{\hat\nu_t\}_{0\leq t\leq 1})=\frac{\beta}{2}\int_0^1 \del_x k_s(x)^2\rd \hat\nu_s \rd s
=\frac{\beta}{2}\int_0^1 (u_s- H(\hat\nu_s)+(\alpha/2) H(\delta_0))^2\rd \hat\nu_s \rd s<+\infty.
\end{align}
In the following we show that 
\begin{align}\label{e:tgo2}
\int_0^1 \int(u_s- H(\hat\nu_s))^2\rd \hat\nu_s \rd s<+\infty.
\end{align}
If $\al=0$, then \eqref{e:tgo2} is the same as \eqref{e:tgo}. We assume $\al>0$, then we can write \eqref{e:tgo} as
\begin{align}\begin{split}\label{e:tgo3}
+\infty&>\int_0^1 \int\left((u_s- H(\hat\nu_s))^2
-(u_s- H(\hat\nu_s))\frac{\al}{x}
+\frac{\alpha^2}{4x^2}  
  \right)\rd \hat\nu_s \rd s\\
  &\geq \int_0^1 \left(\frac{1}{2}(u_s- H(\hat\nu_s))^2
-\frac{\alpha^2}{4x^2}  
  \right)\rd \hat\nu_s \rd s\\
  &\geq 
  \int_0^1 \frac{1}{2}(u_s- H(\hat\nu_s))^2\rd \hat\nu_s \rd s-\fC,
\end{split}\end{align}
where we used \eqref{e:1x} for the last inequality. The claim \eqref{e:tgo2} follows by rearranging \eqref{e:tgo3}.

The measure process $\hat\nu_t$ satisfies \eqref{e:tgo2}, which verifies the assumption in Proposition \ref{p:rate}. Item (ii) in Proposition \ref{p:rate} implies that $\hat\nu_t$ has a density for almost surely all $0\leq t\leq 1$, $\hat\nu_t=\hat\rho_t(x)\rd x$, and 
\eqref{e:masseq2} holds. Moreover, we have
\begin{align}\begin{split}\label{e:nee}
&\phantom{{}={}}\int_0^1 (u_s- H(\hat\nu_s))^2\rd \hat\nu_s \rd s\\
&=\int_0^1 u_s^2\hat\rho_s\rd x\rd s+\int_0^1\int H(\hat\rho_s)^2 \hat\rho_s \rd x \rd s-\left(\Sigma(\hat\nu_1)-\Sigma(\hat\nu_0)\right)\\
&=\int_0^1 u_s^2\hat\rho_s\rd x\rd s+\frac{\pi^2}{3}\int_0^1\int \hat\rho_s^3  \rd x \rd s-\left(\Sigma(\hat\nu_1)-\Sigma(\hat\nu_0)\right).
\end{split}\end{align}

In the following we prove \eqref{Sent2}. The same as in \eqref{e:tgo3}, we have
\begin{align}\begin{split}\label{e:newexp}
\int_0^1 \int \del_x k_s(x)^2 \hat\rho_s \rd x\rd s
=\int_0^1 \left((u_s- H(\hat\rho_s))^2
-(u_s- H(\hat\rho_s))\frac{\al}{x}
+\frac{\alpha^2}{4x^2}  
  \right) \hat\rho_s(x) \rd x \rd s.
\end{split}\end{align}
For the second term on the righthand side of \eqref{e:newexp}, we have
\begin{align}\label{e:newexp3}
\int_0^1(u_s- H(\hat\rho_s))\frac{\al}{x}\hat\rho_s\rd x\rd s
=\int_0^1\frac{\al}{x}u_s\hat\rho_s\rd x\rd s-\int_0^1\frac{\al}{x}H(\hat\rho_s)\hat\rho_s\rd x\rd s.
\end{align}
For the first term on the righthand side of \eqref{e:newexp3}, we have
\begin{align}\begin{split}\label{e:newexp4}
\del_s\int \log x\hat\rho_s \rd x 
&=\int \log |x|\del_s \hat\rho_s\rd x
=-\int \log |x|\del_x(u_s \hat\rho_s)\rd x
=\int \frac{u_s}{x} \hat\rho_s\rd x.
\end{split}\end{align}
For the second term on the righthand side of \eqref{e:newexp3}, we notice the following equality
\begin{align*}\begin{split}
\int H(\hat\rho_s)\frac{1}{x}\hat\rho_s(x)\rd x
&=\iint \frac{\hat\rho_s(y)}{x-y}\frac{\hat\rho_s(x)}{x}\rd x\rd y
=\frac{1}{2}\iint \left(\frac{1}{x(x-y)}-\frac{1}{y(x-y)}\right)\hat\rho_s(x)\hat\rho_s(y)\rd x\rd y\\
&=-\frac{1}{2}\iint \frac{1}{xy}\hat\rho_s(x)\hat\rho_s(y)\rd x\rd y=0,
\end{split}\end{align*}
where we used that $\hat\rho_s$ is symmetric, i.e. $\hat\rho_s(x)=\hat\rho_s(-x)$.

The estimates \eqref{e:nee} and \eqref{e:newexp} together give us the new formula of the dynamical entropy
\begin{align}\begin{split}\label{Sent20}
S_{\hat\mu_0}^\al(\{\hat\nu_t\}_{0\leq t\leq 1})&=\frac{\beta}{2}\left(
\int_0^1 \int u_s^2 \rd \hat\rho_s \rd s+\frac{\pi^2}{3}\int_0^1\int  \hat\rho^3_s \rd s
+\frac{\alpha^2}{4}\int \frac{\hat\rho_s(x)}{x^2}\rd x\rd s\right.\\
&-\left.\left.\left(\Sigma(\hat\nu_t)+ \al \int \log |x|\hat\nu_t(x)\rd x\right)\right|_{t=0}^1
\right).
\end{split}\end{align}
This finishes the proof of Item (iii). 
\end{proof}

\subsection{Free Rectangular Convolution}\label{s:frconv}
The minimizer of the dynamical entropy \eqref{Sent20} is characterized by the free rectangular convolution as introduced in \cite{benaych2009rectangular,belinschi2009regularization}. For any $\la=1/(1+\alpha)\in [0,1]$, the rectangular free convolution denoted by $\boxplus_\la$ can be defined in terms of the rectangular $R$-transform. 
For any symmetric measure $\hat\nu$ on $\bR$, its Stieltjes transform is given by
\begin{align*}
G_{\hat\nu}(z)=\int\frac{\rd \hat\nu(x)}{z-x}.
\end{align*}
The rectangular $R$-transform $C_{\hat\nu}(w)$  with ratio $\la$ of $\hat\nu$ is defined on a neighborhood of zero by
\begin{align}\label{e:Rtransform}
zG_{\hat\nu}(z)-1=C_{\hat\nu}(w),\quad w=G_{\hat\nu}(z)(\la G_{\hat\nu}(z)+(1-\la)/z).
\end{align}
When the symmetric measure $\hat\nu$ is the delta mass $\delta_0$ at zero, we have $G_{\delta_0}(z)=1/z$, and the $R$-transform $C_{\delta_0}(w)=0$. Let $W_n$ be a sequence of $n\times  m$ matrices with entries given by independent real/complex Gaussian random variables with mean zero and variance one, where $m\geq n$ and $n/m\rightarrow \la\in[0,1]$. Then the empirical eigenvalues of $(\sigma W_n/\sqrt n)(\sigma W_n/\sqrt n)^*$ converges to the rescaled Marchenko-Pastur law 
\begin{align*}
\mu_{\sigma W}=\frac{\sqrt{(\sigma^2(1/\sqrt\la+1)^2)-x)(x-\sigma^2(1/\sqrt\la-1)^2})}{\sqrt{2\pi}\sigma x},
\end{align*}
with the Stieltjes transform given by
\begin{align}\label{e:zm}
zm_{\mu_{\sigma W}}(z)-1=\frac{\sigma^2}{\la}m_{\mu_{\sigma W}}(z)(\la zm_{\mu_{\sigma W}}(z)+1-\la),\quad m_{\mu_{\sigma W}}=\int\frac{\rd\mu_{\sigma W}(x)}{z-x}.
\end{align}
We denote the limiting symmetrized singular value distribution of $\sigma W_n$ by $\hat\nu_{\sigma W}$, we call it the square root Marchenko-Pastur law. 
Then
\begin{align}\label{e:sMP}
G_{\hat\nu_{\sigma W}}(z)=\int\frac{\rd \hat\nu_{\sigma W}}{z-x}
=\frac{1}{2}\left(\int\frac{\rd \hat\nu_{\sigma W}}{z-x}+\int\frac{\rd \hat\nu_{\sigma W}}{z-x}\right)
=\int\frac{z\rd \hat\nu_{\sigma W}}{z^2-x^2}
=\int\frac{z\rd \mu_{\sigma W}}{z^2-x}=zm_{\mu_{\sigma W}}(z^2),
\end{align}
and we can rewrite the relation \eqref{e:zm} as
\begin{align*}
zG_{\hat\nu_{\sigma W}}(z)-1=\frac{\sigma^2}{\la}G_{\hat\nu_{\sigma W}}(z)(G_{\hat\nu_{\sigma W}}+(1-\la)/z).
\end{align*}
Comparing with the defining relation of rectangular $R$-transform \eqref{e:Rtransform}, we conclude that the rectangular $R$-transform of the square root Marchenko-Pastur law is given by 
\begin{align}\label{e:sqrtMP}
C_{\hat\nu_{\sigma W}}(w)=\frac{\sigma^2}{\la}w.
\end{align}

\begin{theorem}\label{t:ctransform}
Let $A_n, B_n\in \bR^{n\times m}$ and  $U\in \cO(n),V\in \cO(m)$ following Haar distribution over orthogonal group for $\beta=1$;  $A_n, B_n\in \bC^{n\times m}$ and $U\in \cU(n),V\in \cU(m)$ following Haar distribution over unitary group, for $\beta=2$,  where $m\geq n$ and $n/m\rightarrow \la\in[0,1]$. We assume that  the symmetrized empirical singular values  $\hat\nu_A^n$ and $\hat \nu_B^n$ of $A_n$ and $B_n$ converge to $\hat\nu_A$ and $\hat\nu_B$ respectively. Then the symmetrized empirical singular values of $A_n+U_n B_n V^*$ converges weakly in probability to $\hat\nu_{A\boxplus_\la B}$,  the free rectangular convolution
\begin{align*}
\hat\nu_{A\boxplus_\la B}=\hat\nu_A\boxplus_\la \hat\nu_B,
\end{align*}
which is characterized by
\begin{align}\label{e:freecon}
C_{\hat\nu_{A\boxplus_\la  B}}(w)=C_{\hat\nu_{A}}(w)+C_{\hat\nu_{B}}(w).
\end{align}
\end{theorem}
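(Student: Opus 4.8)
The plan is to prove Theorem~\ref{t:ctransform} by first reducing, via concentration of measure on the compact groups $\cO(n)\times\cO(m)$ (resp. $\cU(n)\times\cU(m)$), to the convergence of the \emph{expected} Stieltjes transform of the symmetrized empirical singular value distribution of $A_n+U_nB_nV_n^*$, and then identifying this limit as the unique solution, near $z=\infty$, of the subordination system that characterizes the rectangular free convolution through \eqref{e:Rtransform}.

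First I would embed the rectangular problem into a Hermitian one: set $\mathcal A_n=\begin{pmatrix} 0 & A_n \\ A_n^* & 0 \end{pmatrix}$ and $\mathcal B_n=\begin{pmatrix} 0 & B_n \\ B_n^* & 0 \end{pmatrix}$, which are $(n+m)\times(n+m)$ Hermitian matrices whose spectra consist of $\{\pm s_i(A_n)\}$ (resp. $\{\pm s_i(B_n)\}$) together with $m-n$ zeros. Writing $W=\mathrm{diag}(U_n,V_n)$, one has $\mathcal A_n+W\mathcal B_nW^*=\begin{pmatrix} 0 & A_n+UB_nV^* \\ (A_n+UB_nV^*)^* & 0\end{pmatrix}$, so the symmetrized singular value law of $A_n+U_nB_nV_n^*$ equals, up to an explicit atom of mass $(m-n)/(n+m)$ at $0$, the spectral measure of $\mathcal A_n+W\mathcal B_nW^*$. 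Since $(U,V)\mapsto \tfrac1n\Tr f\big((A_n+UB_nV^*)(A_n+UB_nV^*)^*\big)$ is Lipschitz in Hilbert--Schmidt norm with constant $O(1/n)$ once the largest singular values of $A_n,B_n$ are truncated (this is legitimate since only uniform second moments, and for $\al\neq 0$ log-integrability, are assumed, and weak convergence is stable under such a truncation by weak continuity of $\boxplus_\la$), the log-Sobolev inequality on the orthogonal/unitary group yields exponential concentration of the empirical symmetrized singular value law around its mean; thus it suffices to identify the limit of its expectation.

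To identify that limit I would exploit that $W$ is Haar on the block subgroup $\cU(n)\times\cU(m)$, so that $\mathcal A_n$ and $W\mathcal B_nW^*$ become asymptotically free with amalgamation over the two-dimensional diagonal subalgebra $D=\C e_1\oplus\C e_2$ equipped with the traces weighted by $\la$ and $1-\la$. Concretely I would study the $D$-valued Cauchy transform $M_n(z)=(\mathrm{id}\otimes\mathrm{tr})\,\bE\big[(z-\mathcal A_n-W\mathcal B_nW^*)^{-1}\big]$ restricted to the two diagonal blocks, and derive, by integration by parts against the Haar measure (the standard loop-equation argument using invariance of Haar under one-parameter subgroups), that its two scalar components $(g_1(z),g_2(z))$ satisfy, up to $O(1/n)$ errors controlled by the concentration estimates of the previous step, the operator-valued subordination system for $\mathcal A_n$ and $\mathcal B_n$. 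Unwinding this system using the definition \eqref{e:Rtransform} of the rectangular $R$-transform shows it is equivalent to
\[
zG_{\hat\nu}(z)-1=C_{\hat\nu_A}(w)+C_{\hat\nu_B}(w),\qquad w=G_{\hat\nu}(z)\big(\la G_{\hat\nu}(z)+(1-\la)/z\big),
\]
where $G_{\hat\nu}$ is the (limiting) symmetrized singular value Cauchy transform. A routine fixed-point argument shows this system admits a unique analytic solution for $z$ in a neighborhood of infinity, hence $G_{\hat\nu}=G_{\hat\nu_{A\boxplus_\la B}}$ there, and by analytic continuation on the upper half-plane, which yields the asserted weak convergence in probability.

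The main obstacle I expect is the derivation of the subordination system from the Haar integral together with the bookkeeping of the $1/n$ corrections: when $W$-derivatives are commuted past resolvents one produces fluctuation terms whose control requires feeding back the concentration bound of the first step (the closure of the self-consistent equation is not automatic). A secondary technical point is precisely the truncation needed to pass from uniformly bounded operator norms to the stated moment hypotheses. One could instead invoke the convergence theorem of Benaych-Georges \cite{benaych2009rectangular,BG11} directly in the bounded case and then perform this same truncation, but carrying out the loop-equation computation also makes the $O(1/n)$ rate transparent, which is convenient when combining this identification with the large deviation estimates of the previous sections.
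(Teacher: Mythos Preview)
The paper does not supply its own proof of Theorem~\ref{t:ctransform}; the result is stated as background, attributed to the works \cite{benaych2009rectangular,belinschi2009regularization} where rectangular free convolution was introduced and this convergence established. There is therefore no ``paper's proof'' to compare against.

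Your outline is nonetheless a correct sketch and is essentially the strategy underlying Benaych-Georges's original treatment: the hermitization turns the rectangular problem into a Hermitian one in which the pair $(U,V)$ is Haar on the block-diagonal subgroup $\cU(n)\times\cU(m)\subset\cU(n+m)$, yielding asymptotic freeness with amalgamation over the two-dimensional diagonal algebra $D$, and the rectangular $R$-transform \eqref{e:Rtransform} is precisely the scalar encoding of $D$-valued freeness in this block structure. Two small corrections. First, the Lipschitz constant of $(U,V)\mapsto \tfrac{1}{n}\sum_i f(s_i(A_n+UB_nV^*))$ in Hilbert--Schmidt norm is $O(\|f\|_{\mathrm{Lip}}\|B_n\|_{\mathrm{op}}/\sqrt{n})$, not $O(1/n)$, by the Hoffman--Wielandt inequality for singular values; combined with the $O(1/n)$ log-Sobolev constant on the compact groups this still gives concentration at scale $e^{-cn^2 t^2}$, so your conclusion is unaffected. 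Second, Theorem~\ref{t:ctransform} as stated assumes only weak convergence of $\hat\nu_A^n,\hat\nu_B^n$, not the second-moment or log-integrability hypotheses of Theorem~\ref{main1} that you invoke; the truncation-to-bounded-operators step is indeed the right way to close this gap, and the needed continuity of $(\mu,\nu)\mapsto\mu\boxplus_\lambda\nu$ is established in \cite{benaych2009rectangular}.
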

To use \eqref{e:freecon} to solve for the measure $\hat\nu_{A\boxplus_\la B}$, we need to solve  
\begin{align}\label{e:wz}
w=G_{\hat\nu_{A\boxplus_\la  B}}(z)(\la G_{\hat\nu_{A\boxplus_\la  B}}(z)+(1-\la)/z)=G_{\hat\nu_{A}}(z')(\la G_{\hat\nu_{A}}(z')+(1-\la)/z').
\end{align}
where $z,z'$ belongs to some neighborhood of infinity. We formally show below how to deduce a closed equation for $G_{\hat\nu_{A\boxplus_\la  B}}(z)$ from \eqref{e:wz} and \eqref{e:freecon} and leave the reader check that we can take  $z,z'$ in such a neighborhood.
We notice that the defining relation \eqref{e:Rtransform} gives that
\begin{align}\label{e:Gz'}
\frac{G_{\hat\nu_{A}}(z')}{z'}=\frac{1+C_{\hat\nu_{A}}(w)}{(z')^2}=\frac{w}{\la C_{\hat\nu_{A}}(w)+1},
\end{align}
and from \eqref{e:freecon}  we get
\begin{align}\label{e:Gz}
\frac{G_{\hat\nu_{A\boxplus_\la B}}(z)}{z}&=\frac{1+C_{\hat\nu_{A\boxplus_\la  B}}(w)}{z^2}=\frac{w}{\la C_{\hat\nu_{A\boxplus_\la  B}}(w)+1}\\
&=\frac{w}{\la C_{\hat\nu_{A}}(w)+1+\la C_{\hat\nu_{B}}(w)}
=\frac{1}{\frac{z'}{G_{\hat\nu_{A}}(z')}+\la \frac{C_{\hat\nu_{B}}(w)}{w}}.\nonumber
\end{align}
In particular we can rearrange \eqref{e:Gz'} and \eqref{e:Gz} as
\begin{align}\label{e:zz}
\frac{z'}{G_{\hat\nu_{A}}(z')}=\frac{z}{G_{\hat\nu_{A\boxplus_\la B}}(z)}-\frac{\la C_{\hat\nu_{B}}(w)}{w}.
\end{align}
Thanks to \eqref{e:wz}, we have the expression of $w$ in terms of $G_{\hat\nu_{A\boxplus_\la B}}(z)$ and $z$. We can then solve $z'$ using \eqref{e:wz} and \eqref{e:zz} in terms of $G_{\hat\nu_{A\boxplus_\la B}}(z)$ and $z$. Plugging them into \eqref{e:zz}, we  finally get a self-consistent equation for  $G_{\hat\nu_{A\boxplus_\la B}}(z)$ and $z$ which has a unique solution in a neighborhood of infinity, which determines $\hat\nu_{A\boxplus_\la B}$.

The empirical distribution of eigenvalues of large dimensional information-plus-noise type matrices \cite{dozier2007analysis,dozier2007empirical,bai2012no} can also be characterized by rectangular free convolution. This model is of particular interest because of its applications in statistics. 
The following Theorem is a special case of \cite{dozier2007analysis,dozier2007empirical}, which deals with more general noise.
\begin{theorem}\label{t:addfree}
Let $A_n$ be an sequence of $n\times m$ matrices  and $W_n$ be a sequence of $n\times  m$ matrices with entries given by independent real or complex Gaussian random variables with mean zero and variance $1/n$, where $m\geq n$ and $n/m\rightarrow \la\in[0,1]$.  If the eigenvalue distributions of $A_nA_n^*$ converge to $\mu_A$.  Then the empirical eigenvalue distributions of $(A_n+\sigma W_n)(A_n+\sigma W_n)^*$ converge to a deterministic measure $\mu_{A\boxplus_\la \sigma W}$ with Stieltjes transform $m(z)$ given by
\begin{align}\label{e:recst}
\int \frac{\rd \mu_A(x)}{(1-\sigma^2 m(z))\left((1-\lambda \sigma^2 m(z))z-(1-\la)\sigma^2)\right)-x}=\frac{m(z)}{1-\sigma^2 m(z)},\quad m(z)=\int \frac{\rd \mu_{A\boxplus_\la \sigma W}(x)}{z-x}.
\end{align}
The limit $\lim_{\eta\rightarrow 0+}\Im[m(x+\eta\rm i)]$ exists, it is analytic when it is positive and away from $0$,  and
\begin{align}\label{e:mbb}
|m(z)|\leq \left(\frac{1}{\sigma^2|z|}\right)^{1/2}.
\end{align}
\end{theorem}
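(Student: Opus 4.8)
The plan is to deduce Theorem~\ref{t:addfree} from the rectangular free convolution results already at our disposal, rather than redoing the resolvent analysis of Dozier and Silverstein from scratch. Since $W_n$ has i.i.d.\ centred Gaussian entries, its law is invariant under $W_n\mapsto UW_nV^*$ for $U\in\cO(n)$ (resp.\ $\cU(n)$) and $V\in\cO(m)$ (resp.\ $\cU(m)$); hence $\sigma W_n$ has the same distribution as $U_nD_nV_n^*$, with $U_n,V_n$ Haar and independent of the rectangular diagonal matrix $D_n$ of its singular values. By the Marchenko--Pastur theorem the eigenvalue distribution of $\sigma^2W_nW_n^*$ converges almost surely to $\mu_{\sigma W}$, so the symmetrized singular value distribution of $\sigma W_n$ converges almost surely to the square root Marchenko--Pastur law $\hat\nu_{\sigma W}$ of \eqref{e:sMP}. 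Conditioning on $D_n$ and applying Theorem~\ref{t:ctransform} with $B_n=D_n$, one gets that the symmetrized singular value distribution of $A_n+\sigma W_n=A_n+U_nD_nV_n^*$ converges weakly in probability to $\hat\nu_A\boxplus_\la\hat\nu_{\sigma W}$.

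It would then remain to translate this into the squared, unsymmetrized picture and to identify the Stieltjes transform $m$ of the limiting eigenvalue law of $(A_n+\sigma W_n)(A_n+\sigma W_n)^*$. If $\hat\nu$ denotes the limiting symmetric singular value law, then exactly as in \eqref{e:sMP} one has $G_{\hat\nu}(z)=z\,m(z^2)$, so the weak convergence above transfers to weak convergence of $\frac1n\sum_i\delta_{\lambda_i(YY^*)}$ towards a deterministic $\mu_{A\boxplus_\la\sigma W}$. To reach \eqref{e:recst} I would insert $C_{\hat\nu_{\sigma W}}(w)=(\sigma^2/\la)w$ from \eqref{e:sqrtMP} into the addition rule \eqref{e:freecon} and eliminate the auxiliary variables $z'$ and $w$ from the system \eqref{e:wz}--\eqref{e:zz} using $G_{\hat\nu}(z)=z\,m(z^2)$; this is a bounded sequence of algebraic substitutions and the resulting self-consistent relation, valid first in a neighbourhood of infinity, is precisely \eqref{e:recst}. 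Uniqueness of the solution in that neighbourhood is built into the rectangular $R$-transform construction; I would then continue $m$ analytically to all of $\mathbb{C}^+$ using that $m$ is a Stieltjes transform of a probability measure, so that $\Im m>0$ there and the implicit function theorem applies wherever $1-\sigma^2m\neq0$ and the $z$-derivative of the right-hand side of \eqref{e:recst} does not vanish.

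For the boundary behaviour — existence of $\lim_{\eta\to0+}\Im m(x+\eta\,\mathrm i)$, analyticity where it is positive, and continuity away from a finite set — I would invoke the regularization results for rectangular free convolution of Belinschi \cite{belinschi2009regularization} and Benaych-Georges \cite{benaych2009rectangular}, which give that $\hat\nu_A\boxplus_\la\hat\nu_{\sigma W}$ has a density that is real-analytic in the interior of its support with at most finitely many singular boundary points; the statements for $m$ then follow from $G_{\hat\nu}(z)=z\,m(z^2)$. The a priori bound \eqref{e:mbb} I would obtain by combining two facts: (i) the two roots of the quadratic \eqref{e:zm} for $m_{\mu_{\sigma W}}$ have product $1/(\sigma^2z)$, and the Stieltjes branch is the one of smaller modulus, so $|m_{\mu_{\sigma W}}(z)|\le(\sigma^2|z|)^{-1/2}$ and hence $\sup_{\mathbb{C}^+}|G_{\hat\nu_{\sigma W}}|\le 1/\sigma$; (ii) by subordination for $\boxplus_\la$ there is an analytic $\omega\colon\mathbb{C}^+\to\mathbb{C}^+$ with $G_{\hat\nu_A\boxplus_\la\hat\nu_{\sigma W}}(z)=G_{\hat\nu_{\sigma W}}(\omega(z))$, so $\sup_{\mathbb{C}^+}|G_{\hat\nu}|\le1/\sigma$ as well, and therefore $|m(z)|=|G_{\hat\nu}(\sqrt z)|/|\sqrt z|\le(\sigma^2|z|)^{-1/2}$.

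The main obstacle will not be any single estimate but the bookkeeping of domains in the second paragraph: the rectangular $R$-transform is only defined near $0$, the substitution $G_{\hat\nu}(z)=z\,m(z^2)$ folds $\mathbb{C}^+$, and the point $z=0$ — where $\hat\nu$ can be singular when $\la=1$ — must be treated separately, so one has to be careful that the self-consistent equation and the bound are established on a domain large enough to pin down $m$ uniquely and to match the measure produced by Theorem~\ref{t:ctransform}. An alternative that sidesteps all of this is to quote directly the self-contained resolvent-plus-concentration argument of Dozier and Silverstein \cite{dozier2007analysis,dozier2007empirical}, of which the statement is a special case.
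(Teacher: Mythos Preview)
The paper does not prove this theorem: it is stated as a direct citation of Dozier--Silverstein \cite{dozier2007analysis,dozier2007empirical} (``The following Theorem is a special case of\ldots''), and the discussion that follows it only reformulates the conclusion in terms of the rectangular $R$-transform. Your closing sentence --- just quote Dozier--Silverstein --- is exactly what the paper does.

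Your main proposal takes a genuinely different route, deriving the result from Theorem~\ref{t:ctransform} and $R$-transform algebra. The first two steps are sound: bi-unitary invariance of the Gaussian reduces $A_n+\sigma W_n$ to the setting of Theorem~\ref{t:ctransform}, and inserting $C_{\hat\nu_{\sigma W}}(w)=\sigma^2 w/\la$ into \eqref{e:freecon}--\eqref{e:zz} with $G_{\hat\nu}(z)=z\,m(z^2)$ does produce \eqref{e:recst}; the paper itself spells out this equivalence right after the statement. For the boundary regularity, invoking \cite{belinschi2009regularization,benaych2009rectangular} is reasonable, though you should verify that those results give analyticity of $\Im m$ on $\{\Im m>0\}$ and not merely continuity of the density.

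The step I would flag as a genuine gap is your subordination claim (ii). For ordinary additive free convolution one has $G_{\mu\boxplus\nu}=G_\nu\circ\omega$ with $\omega$ a self-map of $\bC^+$, and your argument then closes cleanly. But for $\boxplus_\la$ with $\la<1$ the subordination results in the literature are formulated through the auxiliary transform $z\mapsto G(z)\bigl(\la G(z)+(1-\la)/z\bigr)$ appearing in \eqref{e:Rtransform}, not the Cauchy transform itself, and it is not automatic that this yields an identity $G_{\hat\nu_A\boxplus_\la\hat\nu_{\sigma W}}=G_{\hat\nu_{\sigma W}}\circ\omega$ on all of $\bC^+$. Without that, your route to \eqref{e:mbb} does not close. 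Since \eqref{e:mbb} is obtained in \cite{dozier2007empirical} by a direct a~priori estimate on the fixed-point equation, and since the paper cites the whole theorem wholesale, the clean fix is the one you already list as the alternative.
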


We can reformulate Theorem \ref{t:addfree} in terms of rectangular $R$-transform.
We recall the rectangular $R$-transform of the square root Marchenko-Pastur law from \eqref{e:sqrtMP}
$C_{\hat\nu_{\sigma W}}(w)=\sigma^2w/\la$.
We denote the limiting symmetrized empirical singular value distribution of $A_n$ and $A_n+\sigma W_n$ as $\hat\nu_A$ and $\hat\nu_{A\boxplus_\la\sigma W}$ respectively, then \eqref{e:recst} is equivalent to
\begin{align*}
C_{\hat\nu_{A\boxplus_\la\sigma W}}(w)=C_{\hat\nu_A}(w)+\sigma^2w/\la.
\end{align*}
The Stieltjes transform $m(z)$ can be expressed in terms of the Stieltjes transform of $\hat\nu_{A\boxplus_\la\sigma W}$, and the bound \eqref{e:mbb} becomes
\begin{align}\label{e:mbb2}
\frac{1}{z}G_{\hat\nu_{A\boxplus_\la\sigma W}}(z)=m(z^2),\quad |G_{\hat\nu_{A\boxplus_\la\sigma W}}(z)|\leq \frac{1}{\sigma}.
\end{align}
By letting $z$ approach the support of $\hat\nu_{A\boxplus_\la\sigma W}$ in \eqref{e:mbb2}, we conclude that $\hat\nu_{A\boxplus_\la\sigma W}$ has a density bounded by $\OO(1/\sigma)$, and it is analytic on its support. 

For later purpose, we show that free convolution reduces the dynamical entropy.
\begin{lemma} \label{l:decrease}Let $\hat\nu\in \mathcal C([0,1], \mathbb M_1^s(\mathbb R))$ and $p\in \mathbb M_1^s(\mathbb R))$. Then for all $\lambda\ge 0$, 
$$S^\alpha_{\hat\nu_0\boxplus_\la p}(\{ \hat\nu_{t}\boxplus_\la p\}_{t\in [0,1]})\le S^\alpha(\{ \hat\nu_{t}\}_{t\in [0,1]})\,.$$
\end{lemma}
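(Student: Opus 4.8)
The plan is to mimic, in the rectangular setting, the argument of \cite{cabanal2003discussions} showing that free convolution decreases the dynamical entropy of the Dyson Brownian motion. We may assume $S^\al(\{\hat\nu_t\}_{0\le t\le1})<\infty$, since otherwise there is nothing to prove, and, arguing as in Proposition \ref{p:rate2}, that $\hat\nu_0$ has finite second and logarithmic moments and finite free entropy (the general case follows by a standard approximation). Note that $t\mapsto\hat\nu^p_t:=\hat\nu_t\boxplus_\la p$ is again a continuous symmetric measure-valued process with $\hat\nu^p_0=\hat\nu_0\boxplus_\la p$, so the left-hand side is well defined; here $\la=1/(1+\al)$ is the ratio fixed throughout this section.

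By Proposition \ref{p:rate2}, $\hat\nu_t(dx)=\hat\rho_t(x)\,dx$ for a.e.\ $t$, the velocity field $u_t=H(\hat\nu_t)+\tfrac{\al}{2}H(\delta_0)+\del_x k_t$ obeys the continuity equation $\del_t\hat\rho_t+\del_x(\hat\rho_t u_t)=0$, and
\[
S^\al(\{\hat\nu_t\}_{0\le t\le1})=\frac{\beta}{2}\int_0^1\!\int(\del_x k_t(x))^2\,d\hat\nu_t(x)\,dt .
\]
The same structural description holds for $\{\hat\nu^p_t\}$ once it is known to have finite entropy, with velocity field $u^p_t$ and external drift $\del_x k^p_t:=u^p_t-H(\hat\nu^p_t)-\tfrac{\al}{2}H(\delta_0)$. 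Hence it is enough to prove, for a.e.\ $t$, the pointwise bound
\[
\int(\del_x k^p_t(x))^2\,d\hat\nu^p_t(x)\ \le\ \int(\del_x k_t(x))^2\,d\hat\nu_t(x),
\]
integrate over $t\in[0,1]$ and multiply by $\beta/2$; finiteness of the right-hand side then yields finiteness of the entropy of $\{\hat\nu^p_t\}$ a posteriori.

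To prove the pointwise bound, fix $t$ and drop it from the notation. Model $\hat\nu$ and $p$ by rectangular-free elements in a tracial von Neumann algebra $(\mathcal M,\tau)$ built from a pair of projections of ratio $\la$, so that the symmetrized singular value laws of the two summands are $\hat\nu$ and $p$ and that of their sum is $\hat\nu^p$; let $E$ be the trace-preserving conditional expectation onto the von Neumann subalgebra generated by the sum. The subordination property of the rectangular free convolution \cite{benaych2009rectangular,belinschi2009regularization} --- $G_{\hat\nu^p}(z)=G_{\hat\nu}(\omega(z))$ for an analytic subordination function $\omega(z)\sim z$ at infinity, together with the companion relation for the auxiliary variable $w=G(z)(\la G(z)+(1-\la)/z)$ of \eqref{e:Rtransform} --- identifies $E$ applied to the (symmetrized) resolvent of the first summand with the subordinated resolvent of the sum, and therefore shows that the subordination relations exactly absorb the discrepancy between the self-interaction Hilbert-transform and singular-drift terms on the two sides. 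What remains is the identity $\del_x k^p=E[\del_x k]$ for the external drifts; here the coefficient $\al$ of $\tfrac{\al}{2}H(\delta_0)=\tfrac{\al}{2x}$ must match $\la$ via $\la=1/(1+\al)$, reflecting that this term encodes an atom of mass $\al/(1+\al)$ at the origin that is itself part of the rectangular structure. Since a trace-preserving conditional expectation contracts $L^2(\tau)$,
\[
\int(\del_x k^p)^2\,d\hat\nu^p=\|E[\del_x k]\|_{L^2(\tau)}^2\ \le\ \|\del_x k\|_{L^2(\tau)}^2=\int(\del_x k)^2\,d\hat\nu .
\]
Equivalently, with no operator algebras, one realises $E$ as integration against the probability kernel obtained by pushing $\hat\nu$ forward by $\omega^{-1}$ and renormalising, and the bound is just Jensen's inequality for that kernel.

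I expect the main obstacle to be exactly the content of the previous paragraph: making rigorous the continuity equation and the velocity field for $\{\hat\nu^p_t\}$ (which uses that the rectangular subordination functions are analytic off the boundary of the support and that $\hat\nu^p_t$ has a bounded density whenever $p$ does, as in the discussion after Theorem \ref{t:addfree}), and, above all, verifying the intertwining $\del_x k^p=E[\del_x k]$, i.e.\ that subordination conjugates the nonlinear map $\hat\nu\mapsto H(\hat\nu)+\tfrac{\al}{2}H(\delta_0)$ into its $\hat\nu^p$-counterpart through $E$. This compatibility of the origin atom with the rectangular ratio is the one genuinely new ingredient compared with the Dyson Brownian motion case treated in \cite{cabanal2003discussions}; the remaining steps are routine bookkeeping.
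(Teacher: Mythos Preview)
Your proposal is correct and follows essentially the same approach as the paper: both reduce the inequality to the identity $\del_x k^p_t = E[\del_x k_t]$ (conditional expectation onto the algebra generated by the sum) and then invoke the $L^2$-contractivity of $E$. The paper packages this by realising $\{\hat\nu_t\}$ as the spectral law of a free stochastic differential equation $\rd\sfh(t)=\rd\sfg(t)+\del_x k_t(\sfh(t))\,\rd t$ and adding a free $\sfp$, which sidesteps the explicit subordination bookkeeping you flag as the main obstacle; your analytic route via the rectangular subordination functions is the same argument in different clothing.
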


\begin{remark}\label{r:rectconv}
If we take $p$ to be the square root Marchenko-Pastur law on scale $\varepsilon$, i.e. $\sigma_\varepsilon=\hat\nu_{\varepsilon W}$ as in \eqref{e:sMP}, then Theorem \ref{t:addfree} implies
$\hat\nu_t^\varepsilon=\hat\nu_t\boxplus_\la \sigma_\varepsilon$ has an analytic density, which is bounded by $\OO(1/\varepsilon)$, and Lemma \ref{l:decrease} implies 
\begin{align*}
S^\alpha_{\hat\nu^\varepsilon_0}(\{ \hat\nu^\varepsilon_{t}\}_{t\in [0,1]})\le S_{\hat\mu_0}^\alpha(\{ \hat\nu_{t}\}_{t\in [0,1]})\,.
\end{align*}
\end{remark}

\begin{proof}
{
With the notation of rectangular free convolution, we can construct the limiting object of the matrix Brownian motions $A_n$ and $G(t)/\sqrt n$ from \eqref{e:defBB}, 
\begin{align*}
\sfa =\left(
\begin{array}{cc}
0 &a\\
a^* & 0
\end{array}
\right),
\quad
\sfg(t)=\left(
\begin{array}{cc}
0 &g(t)\\
g(t)^* & 0
\end{array}
\right),
\end{align*}
where $g(t)$ and $a$ are the limit in $*$-moments of $G(t)/\sqrt n$ and $A_n$ respectively. Then $\sfa$ and $\sfg(t)$ are free and $\sfg(t)$ has independent increment in terms of rectangular free convolution, with $\la=1/(1+\al)$.

For any measurable odd function $\{\partial_x k_t(x)\}_{0\leq t\leq 1}$, we consider the following noncommutative stochastic process,
\begin{align*}
\rd\sfh(t)=\rd \sfg(t)+\partial_x k_t(\sfh(t))\rd t,\quad \sfh(0)=\sfa.
\end{align*}
Then the nonzero part of the spectral measure $\hat \rho_t$ of $\sfh(t)$, satisfies the equation \eqref{e:masseq2}
\begin{align}\label{e:masseq3copy}
\del\hat \rho_t+\del_x((H(\hat\rho_t)+\al H(\delta_0)/2+\del_x k_t)\hat\rho_t)=0.
\end{align}
Then for any symmetric probability measure $\hat p$ on $\bR$, the process $\hat\rho^p_t=\hat\rho_t\boxplus_\la \hat p$ satisfies the same differential equation but with $\del_x k_t$ replaced by $\del_x k^p_t$ defined by, if $\sfh(t)$ and $\sfp$ are two free random variables with respect to the rectangular free convolution in a non-commutative probability space $\tau$ with distribution $\hat\rho_t$ and $\hat p_t$ respectively, then
\begin{align*}
\del_x k^p_t(x)=\tau(\del_x k_t(\sfh(t))|\sfh(t)+\sfp).
\end{align*}
As a consequence, we find 
\begin{align*}
S^\al(\{\hat\rho_t\boxplus_\la \hat p\}_{0\leq t\leq 1})
&=\frac{\beta}{2}\int_0^1 \hat\rho_s\boxplus_\la \hat p(\del_x( k^p_s)^2)\rd s
=\frac{\beta}{2}\int_0^1 \tau(\tau(\del_x k_s(\sfh(t))|\sfh(t)+\sfp))^2) \rd s\\
&\leq \frac{\beta}{2}\int_0^1 \tau(\del_x k_s(\sfh(t))^2) \rd s
=\frac{\beta}{2}\int_0^1 \int \del_x k_s(x)^2\hat\rho_s\rd s
=S^\al(\{\hat\rho_t\}_{0\leq t\leq 1}).
\end{align*}

}

\end{proof}
\subsection{It\^o's calculus for the Dyson Bessel process}\label{s:ito}
In this section, we derive dynamical equations of linear statistics of Dyson Bessel process for general test functions using It\^o's formula. Take any test function $f_t(x)\in \cC_b^{2,1}([0,1]\times \bR)$.  Using to It{\^o}'s lemma, \eqref{e:dsk2} gives
\begin{align}\begin{split}\label{e:lin}
&\phantom{{}={}}\rd\sum_{i\in\qq{-n,n}\setminus \{0\}}f_t(s_i(t))
=\sum_{i\in\qq{-n,n}\setminus \{0\}}f'_t(s_i(t))\rd s_i(t)+\del_t f_t(s_i(t))\rd t+\frac{f''_t(s_i(t))}{2\beta n}\rd t\\
&=\rd L^f_t
+
\frac{1}{4n}\sum_{i\neq j\in\qq{-n,n}\setminus \{0\}}\frac{f'_t(s_i(t))-f'_t(s_j(t))}{s_i(t)-s_j(t)}\d t+\left(\alpha_n-\frac{1}{2n}\right) \sum_{i\in\qq{-n,n}\setminus \{0\}} \frac{f_t'(s_i(t))}{2s_i(t)}\rd t\\
&+\sum_{i\in\qq{-n,n}\setminus \{0\}}\del_t f_t(s_i(t))\rd t+\sum_{i\in\qq{-n,n}\setminus \{0\}}\frac{f''_t(s_i(t))}{2\beta n}\rd t,
\end{split}\end{align}
where the martingale term is given by
\begin{align}\label{e:MLt}
\rd L^f_t=\sum_{i\in\qq{-n,n}\setminus \{0\}}f'_t(s_i(t))\frac{\rd W_i(t)}{\sqrt {\beta n}},
\end{align}
for ${i\in\qq{-n,n}\setminus \{0\}}$.

We recall the empirical particle density $\{\hat\nu^n_t\}_{0\leq t\leq 1}$ from \eqref{e:density}. With it, we can rewrite \eqref{e:lin} as
\begin{align}\begin{split}&\phantom{{}={}}\int f_t(x)\rd \hat\nu^n_t-\int f_t(x)\rd \hat\nu^n_0\\
&=\frac{L^f_t}{2n}
+\frac{1}{2}\int_0^t\int_{x\neq y} \frac{f_s'(x)-f_s'(y)}{x-y}\rd \hat\nu^n_s(x)\rd\hat\nu^n_s(y)\rd s+\left(\al_n-\frac{1}{2n}\right) \int_0^t\int \frac{f_s'(x)}{2x}\rd \hat\nu^n_s(x)\rd s\\
&+\frac{1}{2\beta n}\int_0^t \int (f''_s(x))\rd \hat\nu^n_s(x)\rd s+\int_0^t\int \del_s f_s(x) \rd\hat\nu^n_s(x)\rd s\\
&=\frac{ L^f_t}{2n}
+\frac{1}{2}\int_0^t\int  \frac{f_s'(x)-f_s'(y)}{x-y}\rd \hat\nu^n_s(x)\rd\hat\nu^n_s(y)\rd s+\left(\al_n-\frac{1}{2n}\right) \int_0^t\int \frac{f_s'(x)}{2x}\rd \hat\nu^n_s(x)\rd s\\
&+\left(\frac{1}{2\beta n}-\frac{1}{4n}\right)\int_0^t\int f''_s(x)\rd \hat\nu^n_s(x)\rd s+\int_0^t\int \del_s f_s(x) \rd\hat\nu^n_s(x)\rd s\\
&=\frac{ L^f_t}{2n}+\int_0^t\int \del_s f_s(x) \rd\hat\nu^n_s(x)\rd s+\int_0^t\int f_s'(x)H(\hat\nu^n_s)\rd \hat\nu^n_s(x)\rd s\\
&+\left(\frac{\alpha_n}{2}-\frac{1}{4n}\right) \int_0^t\int f_s'(x)H(\delta_0)\rd \hat\nu^n_s(x)\rd s
+\frac{1}{n}\left(\frac{1}{2\beta }-\frac{1}{4}\right)\int_0^t\int f''_t(x)\rd \hat\nu^n_s(x)\rd s.\\
&=\frac{ L^f_t}{2n}+\int_0^t \int \del_s f_s(x) \rd\hat\nu^n_s(x)\rd s+\int_0^t\int f_s'(x)H(\hat\nu^n_s)\rd \hat\nu^n_s(x)\rd s\\
&+\frac{\al}{2} \int_0^t\int f_s'(x)H(\delta_0)\rd \hat\nu^n_s(x)\rd s +\varepsilon_t^n(\{f_s\}_{0\leq s\leq 1}),\label{e:df22}
\end{split}\end{align}
where $$\varepsilon^n_t(\{f_s\}_{0\leq s\leq t})=\frac{1}{n}\left(\frac{1}{2\beta }-\frac{1}{4}\right)\int_0^t\int f''_t(x)\rd \hat\nu^n_s(x)\rd s +\left( \frac{\alpha_n-\alpha}{2}-\frac{1}{4n}\right) \int_0^t\int f_s'(x)H(\delta_0)\rd \hat\nu^n_s(x)\rd s,$$ goes to zero uniformly when $\sup_t\|f''_t\|_\infty$ is finite. 
Since $s_{-i}(t)=-s_{i}(t)$, we can rewrite the martingale $\{L_t^f\}_{0\leq t\leq 1}$ from \eqref{e:df22} as
\begin{equation}\label{martdef}L^f_t=\sum_{i=1}^n \frac{1}{\sqrt{\beta n}} \int_0^t (f'(s_i(u)) - f'(-s_i(u)))\d W_i(u),\end{equation}
and its quadratic variation is given by, 
\begin{align}\label{brLf}
\langle L^f, L^f\rangle_t=\frac{1}{\beta n}\sum_{i=1}^n  \int_0^t (f'(s_i(u)) - f'(-s_i(u)))^2 \rd
u=
\frac{1}{\beta} \int_0^t  \int (f'(x)-f'(-x))^2 \d\hat\nu^n_u(x)\rd u,
\end{align}
where we used that the measure $\hat\nu_u^n$ is symmetric.
We can construct an exponential martingale using the martingale $L^f_t$ from \eqref{martdef}
\begin{align}\label{e:Dt}
D_t=e^{\frac{n}{2} L^f_t-\frac{n^2}{8}\langle L^f,L^f\rangle_t},\quad \bE[D_t]=\bE[D_0]=1.
\end{align}
We set
\begin{align}\label{e:defSn2}
S^{\al_n,n}(\{\hat\nu_t^n, f_t\}_{0\leq t\leq 1})
&=\frac{1}{n^2}\left(\frac{nL^f_1}{2}-\frac{n^2}{8}\langle L^f, L^f\rangle_1\right).
\end{align}
Then for $\{\hat\nu^n_t\}_{0\leq t\leq 1}\in \bB(\{\hat\nu_t\}_{0\leq t\leq 1}, \delta)$, we have by uniform (in $n\ge 1$) continuity of $\{\hat\nu_t\}_{0\leq t\leq 1}\mapsto S^n(\{\hat\nu_t, f_t\}_{0\leq t\leq 1})$ and the convergence of $\alpha_n$ to $\alpha$, that for any $f\in \cC_b^{2,1}([0,1]\times \bR)$, 
\begin{align}\begin{split}\label{e:sanb}
S^{\al_n,n}(\{\hat\nu_t^n, f_t\}_{0\leq t\leq 1})
&= \left(L^\alpha_1(\{\hat\nu^n_t,f_t\}_{0\leq t\leq 1})-\frac{1}{8\beta} \int_0^1  \int (f'(x)-f'(-x))^2 d\hat\nu^n_u(x)\rd u\right)+\varepsilon^n_1(\{f_t\}_{0\leq t\leq 1})\\
&=S^\al (\{\hat\nu_t, f_t\}_{0\leq t\leq 1})+\oo_{n,\delta}(1).
\end{split}\end{align}

%
%
%
%

\subsection{Large deviations upper bound}\label{s:ldup}
In this section we prove the large deviations upper bound of Theorem \ref{main2}
\begin{align}\label{e:DBMupbb2}
&\limsup_{\delta\rightarrow 0}\limsup_{n\rightarrow\infty}\frac{1}{n^2}\log \bP(\{\hat\nu^n_t\}_{0\leq t\leq 1}\in \bB(\{\hat\nu_t\}_{0\leq t\leq 1}, \delta))
\leq -S^\al_{\hat\mu_0}(\{\hat\nu_t\}_{0\leq t\leq 1}),
\end{align}
by tilting the measure using the  exponential Martingale \eqref{e:Dt}. Indeed, by using \eqref{e:sanb} uniformly on 
 $\{\hat\nu^n_t\}_{0\leq t\leq 1}\in \bB(\{\hat\nu_t\}_{0\leq t\leq 1}, \delta)$, the large deviations upper bound follows  from
\begin{align}\begin{split}\label{e:uppha}
&\phantom{{}={}}\bP(\{\hat\nu^n_s\}_{0\leq t\leq 1}\in \bB(\{\hat\nu_s\}_{0\leq t\leq 1}, \delta))
=\bE\left[\bm1(\{\hat\nu^n_s\}_{0\leq t\leq 1}\in \bB(\{\hat\nu_s\}_{0\leq t\leq 1}, \delta))\frac{e^{n^2S^{\al_n,n}(\{\hat\nu_t^n, f_t\}_{0\leq t\leq 1})}}{e^{n^2S^{\al_n,n}(\{\hat\nu_t^n, f_t\}_{0\leq t\leq 1})}}\right]\\
&=\bE\left[\bm1(\{\hat\nu^n_s\}_{0\leq t\leq 1}\in \bB(\{\hat\nu_s\}_{0\leq t\leq 1}, \delta))e^{n^2S^{\al_n,n}(\{\hat\nu_t^n, f_t\}_{0\leq t\leq 1})}\right]\frac{e^{\oo(n^2)}}{e^{n^2S^{\alpha}(\{\hat\nu_t, f_t\}_{0\leq t\leq 1})}}\\
&\leq \bE\left[e^{n^2S^{\al_n,n}(\{\hat\nu_t^n, f_t\}_{0\leq t\leq 1})}\right]\frac{e^{\oo(n^2)}}{e^{n^2S^{\alpha}(\{\hat\nu_t, f_t\}_{0\leq t\leq 1})}}
=e^{-n^2(S^{\alpha}(\{\hat\nu_t, f_t\}_{0\leq t\leq 1})+\oo_{n,\delta}(1))}.
\end{split}\end{align}
The large deviations upper bound \eqref{e:DBMupbb2} follows from rearranging \eqref{e:uppha}, and taking infimum over functions $f\in \cC^{2,1}_b$.

\subsection{large deviations lower bound}\label{s:lddown}
%
%
%

In this section we prove the large deviations lower bound of Theorem \ref{main2}
\begin{align}\label{e:DBMlower2}
&\limsup_{\delta\rightarrow 0}\limsup_{n\rightarrow\infty}\frac{1}{n^2}\log \bP(\{\hat\nu^n_t\}_{0\leq t\leq 1}\in \bB(\{\hat \nu_t\}_{0\leq t\leq 1}, \delta))
\geq -S^\al_{\hat\mu_0}(\{\hat \nu_t\}_{0\leq t\leq 1}),
\end{align}
using the large deviations lower bound of Dyson  Brownian motion Theorem \ref{t:DBMLDP}, and the change of measure Proposition \ref{p:changem}.
If $S^\al_{\hat\mu_0}(\{\hat \nu_t\}_{0\leq t\leq 1})=+\infty$, there is nothing to prove. Otherwise, we prove that there exists a symmetric probability measure in the form $(\tilde \nu_t(x)+\tilde \nu_t(-x))/2$, such that 
$\tilde \nu_t(x)$ is supported on $[\fa,\infty)$, with $\fa>0$, has a smooth density $\rd\tilde \nu_t(x)=\tilde \rho_t(x)\rd x$, and 
\begin{align}\label{e:bor}
S^\al_{(\tilde \nu_0(x)+\tilde \nu_0(-x))/2}(\{(\tilde \nu_t(x)+\tilde \nu_t(-x))/2\}_{0\leq t\leq 1})\leq S^\al_{\hat\mu_0}(\{\hat\nu_t\}_{0\leq t\leq 1})+\oo(1).
\end{align}

Next we construct $\tilde \nu_t(x)$ in \eqref{e:bor}. Take small $\varepsilon>0$, thanks to Remark \ref{r:rectconv}, let $\sigma_\varepsilon=\hat\nu_{\varepsilon W}$ as in \eqref{e:sMP}, then Theorem \ref{t:addfree} implies
$\hat\nu_t^{(1)}=\hat\nu_t\boxplus_\la \sigma_\varepsilon$ has an analytic density on its support, which is bounded by $\OO(1/\varepsilon)$ by  \eqref{e:mbb2}. In particular $\hat\nu_t^{(1)}$ has no atom at the origin. 
Moreover,  Lemma \ref{l:decrease} implies 
\begin{align*}
S^\alpha_{\hat\nu^{(1)}_0}(\{ \hat\nu^{(1)}_{t}\}_{t\in [0,1]})\le S_{\hat\mu_0}^\alpha(\{ \hat\nu_{t}\}_{t\in [0,1]})\,.
\end{align*}
Let $\hat \nu^{(1)}_t(x)=\hat\rho^{(1)}_t(x)\rd x$ for $0\leq t\leq 1$. Then $\hat\rho^{(1)}_t$ is a symmetric measure, with analytic density on its support. We denote $\rho^{(1)}_t=2\hat\rho^{(1)}_t|_{[0,\infty)}$, then $\hat\rho^{(1)}_t(x)=(\rho^{(1)}_t(x)+\rho^{(1)}_t(-x))/2$. Let $u_t^{(1)}$ be the weak solution of $\del_t\rho_t^{(1)}+\del_x(\rho_t^{(1)}u_t^{(1)})=0$. Thanks to Remark \ref{r:unsym}, we can rewrite the dynamical rate function as
\begin{align}\label{e:largeupb2}\begin{split}
S_{\hat\mu^{(1)}_0}^\alpha(\{\hat \nu^{(1)}_t\}_{0\leq t\leq 1})&=\frac{\beta}{2}\left(
\int_0^1 \int (u^{(1)}_s)^2 \rd \nu^{(1)}_s \rd s+\frac{\pi^2}{12}\int_0^1\int  (\rho_s^{(1)})^3 \rd s
+\frac{\alpha^2}{4}\int \frac{ \rho^{(1)}_s(x)}{x^2}\rd x\rd s\right.\\
&-\left.\left.\left(\Sigma(\hat \nu^{(1)}_t)+ \al \int \log |x|\rd\hat\nu^{(1)}_t(x)\right)\right|_{t=0}^1
\right)=S_{\nu^{(1)}_0}^\alpha(\{\nu^{(1)}_t\}_{0\leq t\leq 1}).
\end{split}\end{align}

For any small number $\fa>0$, we denote 
$\nu^{(2)}_t$ the probability obtained from shifting $\nu^{(1)}_t$ to the right by $2\fa$, and corresponding $\rho^{(2)}_t, u^{(2)}_t$:
\begin{align*}
\rho^{(2)}_t(x)=\rho^{(1)}_t(x-2\fa), \quad \del_t \rho^{(2)}_t+\del_x(\rho^{(2)}_t  u^{(2)}_t)=0.
\end{align*}
Then $u^{(2)}_t(x)=u_t(x-2\fa)$, and it is easy to see from \eqref{e:largeupb2} that 
\begin{align*}
S_{\nu^{(2)}_0}^\alpha(\{ \nu^{(2)}_t\}_{0\leq t\leq 1})\leq  S_{\nu^{(1)}_0}^\alpha(\{\nu^{(1)}_t\}_{0\leq t\leq 1})+\oo_\fa(1).
\end{align*}
Since the support of $\nu^{(2)}_t$ is on $[2\fa, +\infty)$, $S_{\nu^{(2)}_0}^\alpha(\{ \nu^{(2)}_t\}_{0\leq t\leq 1})$ no longer have a singularity at $0$. 

Thanks to Proposition \ref{p:approximation}, we can further approximate $\{\nu^{(2)}_t\}_{0\leq t\leq 1}$ by a sequence of measure-valued processes $\{\nu^\varepsilon_t(x)\}_{0\leq t\leq 1}$, such that 
\begin{align*}
\lim_{\varepsilon\rightarrow 0} \sup_{0\leq t\leq 1}d(\nu^{(2)}_t, \nu^\varepsilon_t )=0.
\end{align*}
which satisfies the properties of the Proposition. 
For $\varepsilon>0$ small enough, we can construct $\nu^\varepsilon_t$ such that it is supported on $[\fa, +\infty)$, away from $0$. Moreover, 
\begin{align*}
S^\al_{\nu_0^\varepsilon}(\{\nu_t^\varepsilon\}_{0\leq t\leq 1})&\leq S_{\nu^{(2)}_0}^\alpha(\{\nu^{(2)}_t\}_{0\leq t\leq 1})+\oo_\varepsilon(1)\\
&\leq  S_{\nu^{(1)}_0}^\alpha(\{\nu^{(1)}_t\}_{0\leq t\leq 1})+\oo_{\fa,\varepsilon}(1)\leq S_{\hat\mu_0}^\alpha(\{ \hat\nu_{t}\}_{t\in [0,1]})+\oo_{\fa,\varepsilon}(1).
\end{align*}
Moreover, for $\varepsilon,\fa$ sufficiently small, from the construction, we have $d((\nu^{\varepsilon}_t(x)+\nu^{\varepsilon}_t(-x))\rd x/2, \hat\nu_t)\leq \delta/3$.

We take $\{\tilde \nu_t\}_{0\leq t\leq 1}$ as  $\{\nu^\varepsilon_t\}_{0\leq t\leq 1}$  with sufficiently small $\varepsilon,\fa$, with $d((\tilde \rho_t(x)+\tilde \rho_t(-x))\rd x/2, \hat\nu_t)\leq \delta/3$. 
Next, we construct a new family of initial data, using the $1/n$ quantiles of $\tilde \nu_0$, 
\begin{align*}
\tilde\nu_0^n=\frac{1}{n}\sum_{i=1}^n \delta_{\tilde s_i(0)},\quad \tilde s_i(0)=\tilde F_0^{-1}((i-1/2)/n), \quad F_0(x) =\int^x \tilde \rho_0(y)\rd y.
\end{align*}
In this way $\tilde s_i(0)$ are the $1/n$ quantiles of $\tilde \rho_0$. From our construction and Assumption \ref{a:mu0}, we have that 
\begin{align*}
d(\tilde \nu^n_0, \nu^n_0)&=\sqrt{\frac{1}{n}\sum_{1\leq i\leq n} |\tilde s_i(0)-s_i(0)|^2}\\
&\leq d((\tilde \rho_0(x)+\tilde \rho_0(-x))\rd x/2, \hat\mu_0)+d(\hat\nu_0^n, \hat\mu_0)+d(\tilde \nu_0^n, \tilde \rho_0)\leq \frac{\delta}{3}+\oo_n(1).
\end{align*}
We consider the Dyson Bessel process starting from $\tilde \nu_0^n$,
\begin{align}\begin{split}\label{e:modifydsk}
\d \tilde s_i(t) =\frac{\d W_{k}(t)}{\sqrt{\beta n}}+\frac{1}{2n}\sum_{j:j\neq i}\frac{1}{\tilde s_i(t)-\tilde s_j(t)}+\left(\frac{1}{2n}\sum_{j:j\neq i}\frac{1}{\tilde s_i(t)+\tilde s_j(t)}+\frac{\alpha_n}{2 \tilde s_i(t)}\right)\d t,\quad 1\leq i\leq n.
\end{split}\end{align}
which shares the same Brownian motions $W_i$  as \eqref{e:dsk}. We denote its particle density as
\begin{align*}
\tilde \nu_t^n=\frac{1}{n}\sum_{i=1}^n \delta_{\tilde s_i(t)}.
\end{align*}

By taking the difference between \eqref{e:dsk} and \eqref{e:modifydsk}, we get
\begin{eqnarray}
\del_t (\tilde s_i(t)-s_i(t))^2&=&\frac{(\tilde s_i(t)-s_i(t))}{n}\sum_{j:j \neq i}\frac{-(\tilde s_i(t)-s_i(t))+(\tilde s_j(t)-s_j(t))}{(\tilde s_i(t)-\tilde s_j(t))( s_i(t)- s_j(t))}\label{e:difeqs}\\
&+&\frac{(\tilde s_i(t)-s_i(t))}{n}\sum_{j:j \neq i}\frac{-(\tilde s_i(t)-s_i(t))-(\tilde s_j(t)-s_j(t))}{(\tilde s_i(t)+\tilde s_j(t))( s_i(t)+s_j(t))}-\frac{\al_n(\tilde s_i(t)-s_i(t))^2}{\tilde s_i(t)s_i(t)}.\nonumber \end{eqnarray}
Averaging over all the indices $i\in \qq{n}$, we get 
\begin{align}\begin{split}\label{e:difeqs2}
&\frac{1}{n}\del_t\sum_i (\tilde s_i(t)-s_i(t))^2=-\frac{1}{n^2}\sum_{i<j}\frac{((\tilde s_i(t)-s_i(t))-(\tilde s_j(t)-s_j(t)))^2}{(\tilde s_i(t)-\tilde s_j(t))(s_i(t)- s_j(t))}\\
&-\frac{1}{n^2}\sum_{i<j}\frac{\left((\tilde s_i(t)-s_i(t))+(\tilde s_j(t)-s_j(t))\right)^2}{(\tilde s_i(t)+\tilde s_j(t))( s_i(t)+s_j(t))}-\frac{\al_n}{n}\sum_i \frac{(\tilde s_i(t)-s_i(t))^2}{\tilde s_i(t)s_i(t)}\leq 0,
\end{split}\end{align}
where the first two terms on the righthand side of \eqref{e:difeqs2} are negative; For the last term, we used our assumption, either $\al_n=0$ the last term in \eqref{e:difeqs2} vanishes; or $\al_n\geq 1/n\beta$ and $\tilde s_i(t), s_i(t)>0$, the last term in \eqref{e:difeqs2} is nonpositive.
It follows that
\begin{align*}
d(\tilde \nu^n_t, \nu^n_t)=\sqrt{\frac{1}{n}\sum_i (\tilde s_i(t)-s_i(t))^2}\leq \sqrt{\frac{1}{n}\sum_i (\tilde s_i(0)-s_i(0))^2}\leq \delta/2,
\end{align*}
provided $n$ is large enough.
From our construction, we have for $0\leq t\leq 1$ $\supp \tilde \nu_t\in [\fa, \infty)$, then
\begin{align*}\begin{split}
&\phantom{{}={}}\bP(\{\hat\nu^{n}_t\}_{0\leq t\leq 1}\in \bB(\{\hat \nu_t\}_{0\leq t\leq 1}, \delta))
\geq \bP(\{\tilde \nu^{n}_t\}_{0\leq t\leq 1}\in \bB(\{\tilde \nu_t\}_{0\leq t\leq 1}, \delta/2))\\
&\geq \bP(\{\tilde \nu^{n}_t\}_{0\leq t\leq 1}\in \bB(\{\tilde \nu_t\}_{0\leq t\leq 1}, \delta/2), \tilde s_n(t)\geq \fa, 0\leq t\leq 1)\\
&=\bP^\fa(\{\tilde \nu^{n}_t\}_{0\leq t\leq 1}\in \bB(\{\tilde \nu_t\}_{0\leq t\leq 1}, \delta/2), \tilde s_n(t)\geq \fa, 0\leq t\leq 1),
\end{split}\end{align*}
where $\bP^\fa$ is defined in Proposition \ref{p:changem}. 
We can use Proposition \ref{p:changem} to rewrite the law of Dyson Bessel process in term of the law of Dyson Brownian motion. Moreover, on the event $\{\tilde s_n(t)\geq \fa, 0\leq t\leq 1\}$, the stopping time $\tau_\fa=1$, and we can ignore the stopping time. Thus we have
\begin{align}\begin{split}\label{e:abt}
&\phantom{{}={}}\bP(\{\nu^{n}_t\}_{0\leq t\leq 1}\in \bB(\{\nu_t\}_{0\leq t\leq 1}, \delta))
\geq \bP^\fa(\{\tilde \nu^{n}_t\}_{0\leq t\leq 1}\in \bB(\{\tilde \rho_t\}_{0\leq t\leq 1}, \delta/2), \tilde s_n(t)\geq \fa, 0\leq t\leq 1)\\
&=\bE_\bQ\left[ e^{L_{1}-\frac{1}{2}\langle L, L\rangle_{1}}\bm1(\{\tilde \nu^{n}_t\}_{0\leq t\leq 1}\in \bB(\{\tilde \rho_t\}_{0\leq t\leq 1}, \delta/2), \tilde s_n(t)\geq \fa, 0\leq t\leq 1)\right],
\end{split}\end{align}
where the exponential martingale is from Proposition \ref{p:changem}
\begin{align}\begin{split}\label{e:haha}
L_{1}-\frac{1}{2}\langle L, L\rangle_{1}
&=\left.\theta(s_1({t}), \cdots, s_n({t}))\right|_0^{1}- \frac{\beta n}{2}
\int_0^{1}\sum_i \frac{\alpha_n^2}{4s_i^2(t)}\rd t
\\
&-\left(\frac{\beta}{2}-1\right)\int_0^{1}\frac{1}{4n}\sum_{i\neq j}\frac{\rd t}{(s_i(t)+s_j(t))^2}
+\int_0^{1}\frac{\alpha_n}{4}\sum_i \frac{\rd t}{s_i^2(t)}.
\end{split}\end{align}
On the event that $\{\tilde \nu^{n}_t\}_{0\leq t\leq 1}\in \bB(\{\tilde \rho_t\}_{0\leq t\leq 1}, \delta/2), \tilde s_n(t)\geq \fa, 0\leq t\leq 1$, we can rewrite \eqref{e:haha} as
\begin{align}\begin{split}\label{e:abb}
\frac{1}{n^2}\left(L_{1}-\frac{1}{2}\langle L, L\rangle_{1}\right)
&=\left. \frac{\beta}{2}\left(\frac{1}{2}\int \log|x+y|\tilde \rho_t(x)\tilde \rho_t(y)\rd x\rd y+\alpha \int \log |x|\tilde \rho_t(x)\rd x\right)\right|_0^1\\
&-\frac{\beta \al^2}{8}\int_0^1\int \frac{\tilde \rho_t(x)\rd x}{x^2}\rd t
+\oo_{\delta, \fa}(1)+\OO(|\al-\al_n|).
\end{split}\end{align}

%
%

In the following we prove that 
\begin{align}\label{e:signn}
\bQ(\{\tilde \nu^{n}_t\}_{0\leq t\leq 1}\in \bB(\{\tilde \nu_t\}_{0\leq t\leq 1}, \delta/2), \tilde s_n(t)\geq \fa, 0\leq t\leq 1)
=\exp\left\{-n^2S_{\tilde \nu_0}(\{\tilde \rho_t\}_{0\leq t\leq 1})+\oo(n^2)\right\},
\end{align}
where the rate function $S_{\tilde \nu_0}$ is from \eqref{e:rateSmu}.
Then \eqref{e:abt}, \eqref{e:haha}, \eqref{e:abb} and \eqref{e:signn} together imply
\begin{align*}\begin{split}
&\bP(\{\nu^{n}_t\}_{0\leq t\leq 1}\in \bB(\{\nu_t\}_{0\leq t\leq 1}, \delta))
\geq e^{-n^2(S_{\tilde \nu_0}(\{\tilde \nu_t\}_{0\leq t\leq 1})+\oo_n(1))}\\
&\quad \times e^{\frac{\beta n^2}{2}\left(\left.\left( \frac{1}{2}\int \log|x+y|\rd \tilde \nu_t(x)\rd \tilde \nu_t(y)+\alpha \int \log |x|\rd \tilde \nu_t(x)\right)\right|_0^1-\frac{ \al^2}{4}\int_0^1\int \frac{1}{x^2}\rd\tilde \rho_t(x)\rd t
+\oo_{\delta, \fa}(1)+\OO(|\al-\al_n|)\right)}\\
&=e^{-n^2(S_{\tilde \nu_0}^\alpha(\{\tilde \nu_t\}_{0\leq t\leq 1})+\oo_{\fa,\varepsilon}(1))}
\geq e^{-n^2(S_{\hat\mu_0}^\alpha(\{\hat\nu_t\}_{0\leq t\leq 1})+\oo_{\fa,\varepsilon}(1))},
\end{split}\end{align*}
which gives the large deviations lower bound \eqref{e:DBMlower2} by taking $\fa,\varepsilon, \delta\rightarrow 0$.

The estimate \eqref{e:signn} can be proven essentially the same as  \eqref{e:replace2}. 
Let $\bQ^{\beta \tilde k}$ be the law of 
\begin{align*}
\bQ^{\beta\tilde k}=e^{n^2S^{n}(\{\tilde \nu^n_t, \beta\tilde k_t)\}_{0\leq t\leq 1}} \bQ,
\end{align*}
where $S^n(\{\tilde \nu^n_t, \beta\tilde k_t)\}_{0\leq t\leq 1})$ is as defined \eqref{e:defSn}.
Then under $\bQ^{\beta\tilde k}$, the measure valued process $\{\tilde \nu^{n}_t\}_{0\leq t\leq 1}$ has the same law as 
\begin{align}\label{e:DBMaa}
\rd \tilde x_i=\frac{\rd W_i}{\sqrt {\beta n}}+\frac{1}{2n}\left(\sum_{j: j\neq i}\frac{1}{\tilde x_i-\tilde x_j}\right)\d t+\del_x \tilde k_t(\tilde x_i)\rd t.
\end{align}
Thanks to Proposition \ref{p:lowerbound2},  it holds with probability $1-\oo(1)$
\begin{align}\label{e:xkbba}
\sup_{t\in [0,1]}\max_{1\le i\le n} | x_i(t)-\gamma_i(t)|\leq  e^{Kt}\left(\max_{1\le j\le n} (| x_j(0)-\gamma_{j}(0)|)+\frac{M}{\sqrt n}\right)=\oo_n(1),
\end{align}
where the constant $K$ depends on $\tilde \del_x k_t(x)$ and $M$ is stochastically bounded. Especially, \eqref{e:xkbba} implies that
$\bQ^{\beta\tilde k}(\{\tilde \nu^{n}_t\}_{0\leq t\leq 1}\in \bB(\{\tilde \rho_t\}_{0\leq t\leq 1}, \delta/2), \tilde s_n(t)\geq \fa, 0\leq t\leq 1)$ with probability $1-\oo(1)$.
We can conclude that
\begin{align}\begin{split}\label{e:replace2a}
&\phantom{{}={}}\bQ(\{\tilde \nu^{n}_t\}_{0\leq t\leq 1}\in \bB(\{\tilde \rho_t\}_{0\leq t\leq 1}, \delta/2),\tilde s_n(t)\geq \fa, 0\leq t\leq 1)\\
&=\bQ^{\beta\tilde k}( e^{-n^2 S^n(\{\tilde \nu^n_t, \beta\tilde k_t\}_{0\leq t\leq 1})}\bm1(\{\tilde \nu^{n}_t\}_{0\leq t\leq 1}\in \bB(\{\tilde \rho_t\}_{0\leq t\leq 1}, \delta/2),\tilde s_n(t)\geq \fa, 0\leq t\leq 1))\\
&=\exp\{-n^2(S_{\tilde \rho_0}(\tilde \rho_t)+\oo_\delta(1))\}\bQ^{\beta\tilde k}( \{\tilde \nu^{n}_t\}_{0\leq t\leq 1}\in \bB(\{\tilde \rho_t\}_{0\leq t\leq 1}, \delta/2, \tilde s_n(t)\geq \fa, 0\leq t\leq 1)\\
&=\exp\{-n^2(S_{\tilde \rho_0}(\tilde \rho_t)+\oo_\delta(1))\}(1-\oo(1)),
\end{split}\end{align}
where in the third line, we used that $S^n(\{\tilde \nu^n_t, \beta\tilde k_t\}_{0\leq t\leq 1})=S_{\tilde \rho_0}(\tilde \rho_t)+\oo_\delta(1)$ for $\{\tilde \nu^{n}_t\}_{0\leq t\leq 1}\in \bB(\{\tilde \rho_t\}_{0\leq t\leq 1}, \delta/2)$.
This finishes the proof of the  large deviations lower bound.

\subsection{Proof of Theorem \ref{main2}}\label{s:pmain2}
In this section, we prove Theorem \ref{main2}. The statement that $S_{\hat\nu_0}^\al$ is a good rate function follows from Proposition \ref{p:rate2}. The weak large deviations upper bound and lower bound \eqref{e:ulbb} are proven in Sections \ref{s:ldup} and \ref{s:lddown} respectively. In this section we show  that the distribution of $\{\hat\nu^n_t\}_{t\in [0,1]}$ 
satisfying $S_{\hat\nu_0}^\al(\{\hat\nu^n_t\}_{t\in [0,1]})$ is exponentially tight. Then the full large deviation principle follows from the weak large deviations upper bound and lower bound \eqref{e:ulbb}.

 The arguments are very similar to those of  \cite{GZ3} and \cite[Section 2.3]{CDG1} and we therefore only outline them. We see $\{\hat\nu^n_t\}_{0\leq t\leq 1}$ as a continuous process with values on the space of symmetric  probability measures $\mathbb M_1^s(\mathbb R)$ on $\mathbb R$.
We denote by $C([0,1],\mathbb M_1^s(\mathbb R))$ this set.  Because $\mathbb M_1^s(\mathbb R)$ is a closed subset of $\mathbb M_1(\mathbb R)$, its 
  compact sets have the same form and we consider the following compact sets:
\begin{align*}
\mathcal K_{M,\delta }:=&\bigcap_{p\in \mathbb N}\left\{\{\hat\nu_t\}_{0\leq t\leq 1}:\sup_{0\le t\le 1}\hat\nu_t([-M_p,M_p]^c)\le \frac{1}{p}\right\}\\
&\bigcap_{i\in  \N}\bigcap_{m\in \N} \left\{\{\hat\nu_t\}_{0\leq t\leq 1}:  \sup_{|s-t|\le \delta_{m,i}}|\hat\nu_t(f_i)-\hat\nu_s(f_i)|\le \frac{1}{m}\right\},
\end{align*}
where $f_i$ is a dense set of bounded continuous functions on $\mathbb R$ and $\delta_{m,i}$ and $M_p$ are sequences of positive real numbers. We need to show that we can choose the functions $f_i$,  such that for each $L>0$, there exists $\delta=\delta(L)$ and $M_p=M_p(L)$ such that
\begin{equation}\label{exptight}
\bP\left( \{\hat\nu_t\}_{0\leq t\leq 1}\notin \mathcal K_{M,\delta }\right)\le e^{-Ln^2}.
\end{equation}
We first show that for any positive real number $L$ and integer number $p$, we can find $M_{p}(L)$ such that 
\begin{equation}\label{compactM}
\sum_{p\ge p_0} \bP\left(\sup_{t\in [0,1]}\hat\nu^n_t([-M_p(L),M_p(L)]^c)\ge  \frac{1}{p}\right)\le e^{-n^2 L}\,.
\end{equation}

The proof from \cite{CDG1} uses the eigenvalue matrix  representation of the Dyson Brownian motion of the special cases $\beta=1$ or $2$. We therefore show how to extend this proof to all $\beta\ge 1$ and $\alpha_n\ge 0$. 
To this end we use for $\varepsilon>0$, the smooth function $f_\varepsilon(x)=   x^2/(1+\varepsilon x^2)$ in \eqref{e:df22} and notice as in the proof of \eqref{e:L2norm} that
$$\frac{1}{2n}L^{f_\varepsilon}_t\ge \hat\nu^n_t(f_\varepsilon) -\fC, $$
with a constant $\fC$ independent of $\varepsilon$.
Moreover $\langle L^{f_\varepsilon}_t,L^{f_\varepsilon}_t\rangle_t\le 4 \int_0^t \hat\nu_s(f_\varepsilon) \rd s$. On the other hand, for any $L>0$,
 the set $A^n_{R,\varepsilon}=\{\sup_{0\le t\le 1} e^{\frac{n}{2} L^{f_\varepsilon}_t-\frac{n^2}{8} \langle L^{f_\varepsilon}_t\rangle_t}\le e^{n^2 R}\}$ satisfies by Doob's inequality
$$\mathbb P\left( (A^n_{R,\varepsilon})^c\right)\le  \E[e^{n L^{f_\varepsilon}_1-\frac{n^2}{2} \langle L^{f_\varepsilon}_1, L^{f_\varepsilon}_1\rangle_1}]
e^{-n^2 R}=e^{-n^2 R}\,.$$
But on $A^n_{R,\varepsilon}$, we have for all $t\in [0,1]$
$$  \hat\nu_t^n(f_\varepsilon) -\fC- 2\int_0^t \hat\nu_s(f_\varepsilon) \rd s\le R,$$ 
and therefore by Gronwall's lemma 
$$\sup_{t\in [0,1]} \hat\nu_t^n(f_\varepsilon)\le 2(\fC+R)\,.$$
Finally, Tchebyshev's inequality yields, since $f_\varepsilon\ge 1/(2\varepsilon)$ on $[-\varepsilon^{-1/2}, \varepsilon^{-1/2}]^c$,
$$\sup_{t\in [0,1]} \hat\nu_t^n([-\varepsilon^{-1/2}, \varepsilon^{-1/2}]^c)\le 4\varepsilon( \fC+R)\,.$$
Hence, taking $R=L+p$,  $\varepsilon=(4p(\fC+L+p))^{-1}$ and $M_p(L)=\varepsilon^{-1/2}$, yields
$$\bP\left(\sup_{t\in [0,1]}\hat\nu^n_t([-M_p(L),M_p(L)]^c)\ge  \frac{1}{p}\right)\le \mathbb P\left( (A^n_{R,\varepsilon})^c\right)\le  e^{-n^2 (L+p)},$$
which completes the proof of \eqref{compactM} after summing over $p$. 
The proof that for any twice continuously differentiable function $f$ for any $L>0$ and $m\in\mathbb N$ we can find $\delta_{m,i}>0$ such that
$$\mathbb P\left(  \sup_{|s-t|\le \delta_{m,i}}|\hat\nu_t^n(f_i)-\hat\nu_s^n(f_i)|\ge \frac{1}{m} \right)\le e^{-L n^2},$$
follows exactly the proof of \cite[Lemma 2.5]{CDG1}. We therefore omit it.

\section{Applications}
As consequences of the large deviation principle of the Dyson Bessel process, we derive the asymptotics of the rectangular spherical integral in Section \ref{sph-sec}, and prove Theorem \ref{main1}. In Section \ref{s:jointlaw}, we characterize the limiting joint law of  $(A_n,UB_nV^*)$ which follows 
\begin{align*}
\rd\mu_{n,m}(U,V)=\frac{e^{\beta  n\Re[\Tr(A_n^*UB_nV^*)]}}{Z_{n,m}}\rd U\rd V.
\end{align*}

\subsection{Asymptotics of rectangular spherical integral}\label{sph-sec}
As the first application of our large deviation principle for the Dyson Bessel process, we prove Theorem \ref{main1} the asymptotics of rectangular spherical integral,
\begin{align*}
\lim_{n}\frac{1}{n^{2}}\log I_{n,m}(A_n,B_n)=\frac{\beta}{2} I^{\alpha}(\hat \nu_{A},\hat\mu_{B}),\quad I_{n,m}(A_n,B_n)=\int e^{\beta  n\Re[\Tr(A_n^*UB_nV^*)]}\rd U\rd V,
\end{align*}

\begin{proof}[Proof of Theorem \ref{main1}]
We recall from \eqref{e:defX}, $X_n$ is an $n\times m$ rectangular random matrix
\begin{align*}
X_n=A_n+\frac{1}{\sqrt n}G_n.
\end{align*}
where $G_n$ is an $n\times m$ rectangular matrix with independent real ($\beta=1$) or complex ($\beta=2$) Gaussian entries.
We denote the singular value decomposition of $X_n$ as $X_n=UB_nV^*$. Then the law of $X_n$ is
\begin{align}\begin{split}\label{e:lawXX}
\frac{1}{Z_{n,m}}\prod_i b_i^{\beta(m-n+1)-1}
\prod_{i<j}|b^2_i-b^2_j|^\beta e^{-\frac{\beta n}{2}(\sum_i b^2_i+\sum a^2_i)+\beta n\Re[\Tr(A_n^*UB_nV^*)]}\rd U\rd V\rd B_n.
\end{split}\end{align}
The large deviations principle of Dyson Bessel process gives 
\begin{align*}
\lim_{n\rightarrow \infty}\frac{1}{n^2}\log \bP(\hat\nu^n_{B} \in \bB(\hat\nu_B,\delta))
=\inf_{\hat\nu_{1}=\hat\nu_{B}}S_{\hat\nu_A}^\al(\{\hat\nu_{t}\}_{0\leq t\leq 1})+\oo_\delta(1),
\end{align*}
where $\oo_\delta(1)$ goes to zero as $\delta$ goes to zero.
By integrating \eqref{e:lawXX} over the ball $\bB(\hat\nu_B,\delta)$, we have
\begin{align*}
&\phantom{{}={}}\int_{\hat\nu^n_{B}\in \bB(\hat\nu_B,\delta)}\frac{1}{Z_{n,m}}\prod_i b_i^{\beta(m-n+1)-1}
\prod_{i<j}|b^2_i-b^2_j|^\beta e^{-\frac{\beta n}{2}(\sum_i b^2_i+\sum a^2_i)+\beta n\Re[\Tr(A^*UBV^*)]}\rd U\rd V\rd B_n\\
&=\frac{1}{Z_{n,m}} 
e^{\frac{\beta n^2}{2} (2  \al \int \log |x|\rd \hat\nu_B+2\Sigma(\hat\nu_B)-(\hat\nu_A( x^2) +\hat\nu_B(x^2))+\oo_\delta(1))}
\int_{\hat\nu^n_{B}\in \bB(\hat\nu_B,\delta)} \int e^{\beta n\Re[\Tr(A^*UBV^*)]}\rd U\rd V\rd B_n
\end{align*}
where we use similar techniques than in \cite{BAG} to prove that even though the logarithm is singular, $ \int \log |x|\rd \hat\nu^n_B$ is close to $\int \log |x|\rd \hat\nu_B$ on the ball (and similarly for the non-commutative entropy term). 
By rearranging, we obtain the following asymptotics of the spherical integral
\begin{align}\begin{split}\label{e:sphi}
&\phantom{{}={}}\lim_{n\rightarrow \infty}\frac{1}{n^{2}}\log I_{ n,m}(A_n,B_n)
=-\inf_{\nu_{1}=\hat\nu_{B}}S_{\mu_A}^\al(\{\nu_{t}\}_{0\leq t\leq 1})\\
&\qquad -\frac{\beta}{2}\left(2\al\int \log x \rd\hat\nu_{B}(x)+2\Sigma(\hat\nu_B)-(\hat\nu_{A}(x^{2})+\hat\nu_{B}(x^{2})) \right)+\const.
\end{split}\end{align}
Thanks to Proposition \ref{p:rate2}, if $S_{\hat \nu_A}^\al(\{\hat\nu_{t}\}_{0\leq t\leq 1})<\infty$, then $\hat\nu_t$ has a density, i.e. $\{\hat\nu_t\}_{0\leq t\leq 1}=\{\hat\rho_t(x)\rd x\}_{0\leq t\leq 1}$ is a symmetric measure valued process, satisfying the weak limits
\begin{align}\label{e:bbterm}
\lim_{t\rightarrow 0}\hat\rho_t(x)\rd x=\hat\nu_A,\quad
\lim_{t\rightarrow 1}\hat\rho_t(x)\rd x=\hat\nu_B.
\end{align}
Let $u_s$ be the weak solution of the following conservation of mass equation
\begin{align}\label{e:cmass}
\del_s\hat \rho_s+\del_x(\hat \rho_s u_s)=0.
\end{align}
We recall the following formula for the dynamical entropy $S_{\hat\nu_A}^\al(\{\hat\nu_{t}\}_{0\leq t\leq 1})$ from \eqref{Sent2}, 
\begin{align}\begin{split}\label{e:largeupb3}
S_{\mu_0}^\al(\{\hat\nu_t\}_{0\leq t\leq 1})&=\frac{\beta}{2}\left(
\int_0^1 \int u_s^2  \hat\rho_s(x)\rd x \rd s+\frac{\pi^2}{3}\int_0^1\int \hat \rho^3_s(x) \rd x  \rd s
+\frac{\alpha^2}{4}\int \frac{\hat\rho_s(x)}{x^2}\rd x\rd s\right.\\
&-\left.\left.\left(\Sigma(\hat\nu_t)+ \al \int \log |x|\rd\hat\nu_t(x)\right)\right|_{t=0}^1
\right).
\end{split}\end{align}
By plugging \eqref{e:largeupb3} into \eqref{e:sphi}, we obtain the following theorem on the asymptotics of rectangular spherical integral,
\begin{align}\begin{split}\label{e:aratecopy}
&I^{\alpha}(\mu_{A},\mu_{B})=-\inf_{\{\hat\rho_t\}_{0\leq t\leq1} \atop \text{satisfies \eqref{e:bbterm}}}\left\{
\int_0^1 \int u_s^2\hat\rho_s \rd x\rd s+\frac{\pi^2}{3}\int_0^1\int  \hat\rho^3_s \rd x\rd s
+\frac{\alpha^2}{4}\int \frac{\hat\rho_s(x)}{x^2}\rd x\rd s\right\}\\
&+(\hat\nu_{A}(x^{2}-\al\log |x|)+\hat\nu_{B}(x^{2}-\al\log |x|))-(\Sigma(\hat\nu_A)+\Sigma(\hat\nu_B)) +\const.
\end{split}\end{align}
This finishes the proof of Theorem \ref{main1}.
\end{proof}

In the remaining of this section, we give an informal characterization of the minimizer in \eqref{e:aratecopy}, by the complex Burger's equation.
We denote the minimizer of \eqref{e:aratecopy} as $\{\hat\rho_t^*\}_{0\leq t\leq 1}$, then it satisfies the following  Euler equation gives
\begin{align}\label{e:euler}
\del_t u_t+\frac{1}{2}\del_x(u_s^2-\pi^2(\hat\rho^*_s)^2-\frac{\alpha^2}{4x^2})=0.
\end{align}
We define the function 
\begin{align*}
f_t(x)=u_t(x)+{\rm i} \pi \hat\rho^*_t(x).
\end{align*}
Then thanks to the relations \eqref{e:cmass} and \eqref{e:euler}, $f_t(x)$ satisfies the following complex burger's equation
\begin{align}\label{e:burgeq}
\del_t f_t(x)+\del_x f_t(x) f_t(x)=\frac{\alpha^2}{4x^3}.
\end{align}
The complex burger's equation can be solved by characteristic flow formally. Let
\begin{align*}
\del_t z_t=f_t(z_t)=p_t,
\end{align*}
then 
\begin{align*}
\del_t f_t(z_t)=\del_t p_t=\frac{\al^2}{4z_t^3}
\end{align*}
There are two quantities conserved:
\begin{align*}
&\del_t\left(p_t^2+\frac{\al^2}{4z_t^2}\right)=2\del_t p_t p_t-\del_t z_t\frac{\al^2}{2z_t^3}=0,\\
&\del_t\left(z_tp_t-t\left(p_t^2+\frac{\al^2}{4z_t^2}\right)\right)=\del_t z_t p_t+z_t\del_t p_t-p_t^2-\frac{\al^2}{4z_t^2}=0.
\end{align*}
Therefore, we have
\begin{align*}
&f^2_t(z_t)+\frac{\al^2}{4z_t^2}=f^2_0(z)+\frac{\al^2}{4z^2},\\
&z_tf_t(z_t)
=zf_0(z)+t\left(f^2_0(z)+\frac{\al^2}{4z^2}\right).
\end{align*}
Solving them we get
\begin{align*}
z_t=\sqrt{\frac{f^2_0(z)+\frac{\al^2}{4z^2}}{\al^2/4+\left(zf_0(z)+t\left(f^2_0(z)+\frac{\al^2}{4z^2}\right)\right)^2}},\quad
f_t(z_t)=\frac{zf_0(z)+t\left(f^2_0(z)+\frac{\al^2}{4z^2}\right)}{z_t}.
\end{align*}

\subsection{Joint law of $A, UBV^*$}\label{s:jointlaw}
Let $A_n, B_n\in \bR^{n\times m}$ and  $U\in \cO(n),V\in \cO(m)$ following Haar distribution over orthogonal group for $\beta=1$;  $A_n, B_n\in \bC^{n\times m}$ and $U\in \cU(n),V\in \cU(m)$ following Haar distribution over unitary group, for $\beta=2$,  where $m\geq n$ and $m/n\rightarrow 1+\alpha, \alpha\ge 0$. We assume that  the symmetrized empirical singular values  $\hat\nu_A^n$ and $\hat \nu_B^n$ of $A_n$ and $B_n$ converges to $\hat\nu_A$ and $\hat\nu_B$ respectively.
In this section we consider the non-commutative joint distribution of $(A_n,UB_nV^*)$ under
\begin{align}\label{e:lawUb}
\rd\mu_{n,m}(U,V)=\frac{e^{\beta  n\Re[\Tr(A_n^*UB_nV^*)]}}{Z_{n,m}}\rd U\rd V.
\end{align}
To do it, we construct $\mathcal A_n,\mathcal B_n$ to be the hermitized version of these operators:
\begin{align*}
\mathcal A_n=\left(\begin{array}{cc}
0&A_n\cr
A_n^{*}&0\cr\end{array}\right),
\quad 
\mathcal B_n= \left(\begin{array}{cc}
0&UB_nV^* \cr
VB_n^{*}U^{*}&0\cr\end{array}\right)
=\left(\begin{array}{cc}
U&0\cr
0&V\cr\end{array}\right)\left(\begin{array}{cc}
0&B_n\cr
B_n^{*}&0\cr\end{array}\right)
\left(\begin{array}{cc}
U^*&0\cr
0&V^*\cr\end{array}\right).
\end{align*}
With the hermitized  operators $\cA_n, \cB_n$, we can rewrite the law $\rd\mu_{{n,m}}(U,V)$ from \eqref{e:lawUb} as
\begin{align}\label{e:lawUb2}
\rd\mu_{{n,m}}(U,V)=\frac{e^{\beta  n\Re[\Tr(\cA_n\cB_n)]/2}}{Z^\beta_{m,n}}\rd U\rd V,
\end{align}
where $\d U$ denotes the Haar measure on the Unitary (resp. orthogonal) group when $\beta=2$ (resp. $\beta=1$). 
We denote by $\mu_{\mathcal A_n,\mathcal B_n}$ the non-commutative distribution of $(\mathcal A_n,\mathcal B_n)$ given by 
\begin{equation}\label{defher}\mu_{\mathcal A_n,\mathcal B_n}(P)=\frac{1}{ n} \tr(P(\mathcal A_n,\mathcal B_n))\,.\end{equation}
where $P$ belongs to the set $\mathbb C\langle X_1,X_2\rangle$ of non-commutative polynomials in two  self-adjoint variables. 
We recall that $\mathbb C\langle X_1,X_2\rangle$ is the linear span of words in $X_1,X_2$ endowed with the convolution 
$$\left(z X_{i_1}\cdots X_{i_p}\right)^*=\bar zX_{i_p}\cdots X_{i_1},$$ for any $i_j\in \{1,2\}$ and $z\in \mathbb C$. 
We denote the space of non-commutative laws as
\begin{align*}
\cM=\{\tau\in \mathbb C\langle X_1,X_2\rangle^*: \tau(I)=1, \tau(PP^*)\geq 0, \tau(PQ)=\tau(QP),\forall P,Q\in \mathbb C\langle X_1,X_2\rangle\}\,.
\end{align*}
We recall that for any $L>0$, the subset 
$$\cM_L=\{\tau\in \cM: \max_{i=1,2 \atop n\in \mathbb N} L^{-2n}\tau(X_i^{2n})\le 1\},$$
of $\cM$  is a compact metric space. Hereafter, we will concentrate on non-commutative laws with given marginal distributions
$\hat\nu_A,\hat\nu_B$ compactly supported on $[-L.L]$ for some finite $L$:
$$\cM_{\hat\nu_A,\hat\nu_B}=\{ \tau\in \cM_L: \tau(X_1^k)=\hat\nu_A(x^k),\tau(X_2^k)=\hat\nu(x^k), \forall k\in \mathbb N\}\,,$$ 
which is also compact.

\begin{proposition} \label{p:uniquelimit}Let $A_n, B_n\in \bR^{n\times m}$ and  $U\in \cO(n),V\in \cO(m)$ following Haar distribution over orthogonal group for $\beta=1$;  $A_n, B_n\in \bC^{n\times m}$ and $U\in \cU(n),V\in \cU(m)$ following Haar distribution over unitary group, for $\beta=2$,  where $m\geq n$ and $m/n\rightarrow 1+\alpha, \alpha \ge 0$. We assume that  the symmetrized empirical singular values  $\hat\nu_A^n$ and $\hat \nu_B^n$ of $A_n$ and $B_n$ converges to $\hat\nu_A$ and $\hat\nu_B$ respectively.
We further assume that $A_n,B_n$ are uniformly bounded for the operator norm. Then $\mu_{{\mathcal A_n,\mathcal B_n}}$ as defined in \eqref{e:lawUb2} converges almost surely towards a tracial state $\tau$ which depends only on $\hat\nu_{A}$ and $\hat\nu_{B}$.
\end{proposition}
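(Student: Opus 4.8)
The plan is to realise $\mu_{n,m}$ as a conditional law of the matrix Brownian motion of Section \ref{secbessel}, transport the large deviation principle of Theorem \ref{main2} onto this conditioned dynamics, and read off the limiting joint law at time one. \textbf{Compactness and reduction.} Since $A_n,B_n$ are uniformly bounded in operator norm, so are the hermitizations $\mathcal A_n,\mathcal B_n$, say by some $L<\infty$; hence $\mu_{\mathcal A_n,\mathcal B_n}\in\cM_L$, which is compact, and every subsequential limit of $\mu_{\mathcal A_n,\mathcal B_n}$ lies in $\cM_{\hat\nu_A,\hat\nu_B}$ because the marginals $\hat\nu_A^n,\hat\nu_B^n$ converge to $\hat\nu_A,\hat\nu_B$. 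It therefore suffices to exhibit a single element $\tau=\tau(\hat\nu_A,\hat\nu_B)$ of $\cM_{\hat\nu_A,\hat\nu_B}$ such that $\mu_{\mathcal A_n,\mathcal B_n}(P)\to\tau(P)$ almost surely for every self-adjoint $P\in\bC\langle X_1,X_2\rangle$; positivity, normalisation and traciality of $\tau$ are then automatic by passing to the limit along $\mu_{\mathcal A_n,\mathcal B_n}$.

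\textbf{Transporting the LDP.} Recalling \eqref{e:lawXX0}, conditioning $X_n=A_n+G_n/\sqrt n$ on its singular values returns exactly the law $\mu_{n,m}$ of the singular vectors $(U,V)$; since pinning the singular values to those of the fixed $B_n$ has probability zero, we work instead with the localised conditioning on $\{\hat\nu^n_B(X_n)\in\bB(\hat\nu_B,\delta)\}$ and let $\delta\to0$ at the end, which by the Corollary to Theorem \ref{main2} has probability $\exp(-n^2(I_{\hat\nu_A}(\hat\nu_B)+\oo_\delta(1)))$. Writing $X_n=H(1)$ for the Dyson Bessel matrix process \eqref{e:defBB}, the large deviation principle of Theorem \ref{main2} for $\{\hat\nu^n_t\}_{0\le t\le1}$ together with lower semicontinuity and the compactness of its level sets shows that, conditionally on this event, the symmetrised singular-value process concentrates on the minimiser $\{\hat\rho^*_t\}_{0\le t\le1}$ of $S^\al_{\hat\nu_A}$ subject to $\hat\rho^*_1=\hat\nu_B$, \emph{provided this minimiser is unique}; the coupling estimate of Proposition \ref{p:lowerbound2} then upgrades this into rigidity of the individual singular values along the whole trajectory. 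By Proposition \ref{p:rate2} the minimiser carries a smooth velocity field and, through \eqref{e:masseq2}, a smooth drift $\del_x k^*_t$.

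\textbf{Identification of $\tau$.} To recover the joint law with $A_n$, and not merely the spectrum of $H(t)$, one applies the change of measure of Proposition \ref{p:changem} to the hermitized process and rewrites the conditioned dynamics, up to the exponentially negligible tilt, as a Dyson Brownian motion with the smooth drift $\del_x k^*_t$ started from the $(1/n)$-quantiles of $\hat\nu_A$. For such a process the matrix pair $(\mathcal A_n,\mathcal H_n(t))$ converges in non-commutative distribution to $(\sfa,\sfh(t))$, where $\rd\sfh(t)=\rd\sfg(t)+\del_x k^*_t(\sfh(t))\,\rd t$ with $\sfh(0)=\sfa$ in the free rectangular setup of the proof of Lemma \ref{l:decrease}, $\sfg$ being a free rectangular Brownian motion free from $\sfa$; this follows from the concentration of smooth-drift Dyson Brownian motions and from the fact that the $*$-moments of $(\mathcal A_n,\mathcal H_n(t))$ satisfy, in the large $n$ limit, a closed system whose solution depends only on $\hat\nu_A$ and $\{\del_x k^*_s\}_{s\le t}$, hence only on $\hat\nu_A,\hat\nu_B$. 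Evaluating at $t=1$, where the singular values of $H(1)$ have limiting profile $\hat\nu_B$ so that $\mathcal H_n(1)$ and $\mathcal B_n$ differ only by a vanishing, rigidity-controlled perturbation, identifies $\tau$ as the non-commutative distribution of $(\sfa,\sfh(1))$. Finally, since every deviation above occurs with probability $\exp(-cn^2)$, Borel--Cantelli turns convergence in probability into almost sure convergence.

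\textbf{Main obstacle.} The crux is twofold. First, the uniqueness of the rate-function minimiser $\{\hat\rho^*_t\}$ with prescribed endpoints, on which the whole identification rests: this is precisely the improved McKean--Vlasov uniqueness criterion with smooth fields inspired by \cite{LLX} that the paper develops, and it must be run in the presence of the Bessel singularity at the origin, controlled through Propositions \ref{p:changem} and \ref{p:lowerbound2}. Second, upgrading the convergence of the empirical singular-value process to the convergence of the \emph{joint} non-commutative law of $(A_n,H(t))$ --- that is, controlling the relative position of $A_n$ and $H(t)$ --- which genuinely goes beyond the spectral large deviation principle of Theorem \ref{main2} and requires the free-probabilistic description of the conditioned process sketched in the proof of Lemma \ref{l:decrease}.
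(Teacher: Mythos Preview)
Your approach diverges from the paper's in a crucial way, and the divergence is exactly where the gap lies. The paper does \emph{not} attempt to realise the conditioned dynamics as a drifted Dyson process and then argue that the joint law $(\mathcal A_n,\mathcal H_n(t))$ converges to that of a free SDE. Instead, it proceeds in two steps: (i) the LDP of Theorem \ref{main2} together with the bridge representation \eqref{e:sumABW} pins down, for any subsequential limit $\tau^*$ of $\mu_{\mathcal A_n,\mathcal B_n}$, the values of $\tau^*\big(((1-t)\sfa+t\sfb)^k\big)$ for all $t,k$; and (ii) the \emph{loop equations} \eqref{lop} for the Gibbs measure \eqref{e:lawUb2}, derived via the Schwinger--Dyson route of \cite{CGM,GN}, give an algebraic relation that lets one commute $\sfa$ past $\sfb$ and reduce every mixed moment to moments of the form $\tau^*(\sfa^k\sfb^\ell)$, which are already fixed by (i). Uniqueness of $\tau^*$---and hence a posteriori of the minimiser---comes from (ii), not from any direct uniqueness statement for the variational problem.

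Your argument tries to bypass (ii) entirely, and this is where it breaks. The claim that ``the $*$-moments of $(\mathcal A_n,\mathcal H_n(t))$ satisfy, in the large $n$ limit, a closed system whose solution depends only on $\hat\nu_A$ and $\{\del_x k^*_s\}$'' is precisely what needs to be proved, not assumed: the LDP controls only the spectrum of $\mathcal H_n(t)$, and nothing in Propositions \ref{p:changem} or \ref{p:lowerbound2} or in the informal free-SDE sketch of Lemma \ref{l:decrease} gives you the joint moments with $\mathcal A_n$. Likewise, the uniqueness of the constrained minimiser $\{\hat\rho^*_t\}$ with \emph{both} endpoints fixed is not what the McKean--Vlasov uniqueness of \cite{LLX} addresses---that uniqueness is for the forward evolution and is used in the lower bound, not for the bridge problem. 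In short, you have correctly identified the obstacle in your last paragraph, but the resolution you sketch is circular; the missing ingredient is the loop-equation machinery, which is what actually closes the system.
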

The proof starts by noticing that  the convergence of the trace of powers of linear combinations of $\mathcal A_n,\mathcal B_n$ follows from  the large deviations of Dyson Bessel proces. We then show that the non-commutative law $\mu_{{\mathcal A_n,\mathcal B_n}}$ is tight for the weak topology since the variables are uniformly bounded  (and therefore $\mu_{{\mathcal A_n,\mathcal B_n}}\subset \cM_L$ for some finite $L$ and all $n\in\mathbb N$) and that any limit point satisfies the so-called 
 loop equation. The convergence of powers of linear combinations of $\mathcal A_n,\mathcal B_n$ and the loop equation will then be shown to uniquely characterize the limit.

\begin{proof}

We recall the  real/complex Brownian motions starting from $A_n$ from \eqref{e:defBB}: 
\begin{align*}
H(t)=A_n+\frac{1}{\sqrt{n}}G(t). 
\end{align*} 
If we condition on that
the singular values of $H(1)$ are given by $B_n$, i.e. $H(1)=UB_nV^*$, then the joint law of $U,V$ is given by \eqref{e:lawUb}. 
If we further condition on $U,V$, i.e. we condition on that $H(1)=UB_nV^*$, then the law of $\{H_{ij}(t)\}_{0\leq t\leq 1}$ is the same as a Brownian bridge from $H_{ij}(0)$ to $H_{ij}(1)$. Therefore
\begin{align}\label{e:sumABW}
H(t)\stackrel{d}{=} (1-t)A_n +tUB_nU^*+\sqrt{t(1-t)}W_n/\sqrt n,
\end{align}
where $W_n$ is an $n\times  m$ matrix with entries given by independent  real or complex Gaussian random variables. Each entry has mean zero and variance one.
We denote the Hermitized version of $H(t)$ as
\begin{align*}
\mathcal H(t)= \left(\begin{array}{cc}
0&H(t) \cr
H(t)^*&0\cr\end{array}\right).
\end{align*}
The above discussion implies that if we
condition on $\cH(1)$ 
\begin{align*}
\mathcal \cH(1)=
\mathcal B_n= \left(\begin{array}{cc}
0&UB_nV^* \cr
VB_n^{*}U^{*}&0\cr\end{array}\right),
\end{align*}
Then the limiting law of the spectral measure of $\cH(t)$ as $n$ goes to infinity is characterized by the rectangular convolution of  the limiting law of $\hat\nu^n_{(1-t)\cA_n+t \cB_n}$ using the relation \eqref{e:recst}. For any limiting joint law $\tau\in \cM$ of $\cA_n, \cB_n$, it is necessary that
\begin{align}\label{e:boundary}
\tau(\sfa^k)= \int x^{k}\rd\hat\nu_A(x),\quad \tau(\sfb^k)= \int x^k\rd\hat\nu_B(x),
\end{align}
and we claim that we also know for $t\in [0,1]$ and $k\in\mathbb N$ the value of
\begin{align}\label{e:moment}
\tau(((1-t)\sfa+t\sfb)^k).
\end{align}
To see this point, for $\tau\in \cM$ a non-commutative joint law of $\mathsf a,\mathsf b$ and 
 denote $\{\hat\nu^\tau_t\}_{0\leq t\leq 1}$ the measure valued process  such that $\hat\nu^\tau_t$ is the law of $(1-t)\mathsf a+t\mathsf b+\sqrt{t(1-t)} \mathsf w$ where $\mathsf w$ is a symmetrized Pastur-Marchenko law (the limit distribution of nonzero eigenvalues of $\mathcal H(1)$). 
 
 By a tightness argument as in \cite[Section 4.1]{MR2034487}, we have
\begin{align*}
\inf_{\{\hat\rho_t\}_{0\leq t\leq1} \atop \text{satisfies \eqref{e:bbterm}}} S_{\hat\nu_A}^\al(\{\hat\rho_t\}_{0\leq t\leq 1})
=\inf_{\tau\in \cM,\atop 
\text{satisfies }\eqref{e:boundary}}S_{\hat\nu_A}({\{\hat\mu_t^\tau\}_{0\leq t\leq 1}}),
\end{align*}
and the infimum is achieved at some $\tau^*$ (later we will show that such $\tau^*$ is unique.)
It follows that for all $t\in [0,1]$, and $k\geq 1$,
\begin{align}\label{e:taum}
\tau^*\left(((1-t)\mathsf a-t\mathsf b-\sqrt{t(1-t)}\mathsf s)^k\right)= \int x^k \hat\rho^*_{t}(x) \d x,
\end{align}
and $\hat\rho_t^*$ is analytic for $\hat\rho_t^*(x)>0$, and bounded by $\OO(1/\sqrt{t(1-t)})$.

The relation \eqref{e:taum} is enough to deduce the distribution of $\hat\nu_{(1-t)\mathsf a+t\mathsf b }$ of $(1-t)\mathsf a+t\mathsf b$ thanks to the rectangular free convolution relation \eqref{e:recst}. In fact, thanks to Theorem \ref{t:ctransform}, the rectangular $R$-transform of the measure $\hat\nu_{(1-t)\sfa+t \sfb}$, and $\hat\rho_t^*$ are related by 
$$C_{\hat\nu_{(1-t)\mathsf a+t \mathsf b}}(z)=C_{\hat\rho_t^*}(z)-\frac{t(1-t) z}{\la}\,.$$
The rectangular $R$-transform of $\mu_t$ can be solved in terms of the rectangular $R$-transform of $\hat\rho_t^*$, and it uniquely characterizes $\mu_t$. It gives us the moments \eqref{e:moment} for $\tau^*$.
Next we derive the loop equations for the measure \eqref{e:lawUb2} : they will together with \eqref{e:moment} describe uniquely the non-commutative law $\tau^*$. Let $\mathbb C\langle A,B,U,V\rangle$ denote the set of $*$ polynomials for non-commutative operators $A,B,U,V$.
Under the assumptions of Proposition \ref{p:uniquelimit}, let  $W\in \mathbb C\langle A,B,U,V\rangle$ be a self-adjoint polynomial.
We recall from \cite{CGM,GN}, that for any measure 
\begin{align}\label{e:wmeasure}
\frac{1}{Z_n}e^{\frac{n\beta}{2}\tr W (A_n,B_n,U,V)} \rd U\rd V,
\end{align} 
and any polynomial $P$ in $ \bC\langle A,B,U,V\rangle$,
$$\frac{1}{n}\Tr\otimes\frac{1}{n} \Tr(\partial_U P)+\frac{1}{n}\Tr( P\mathcal D_U W),$$
goes to zero almost surely, where for any monomial $P$ in $A,B,A^*,B^*,U,U^*,V,V^*$
$$\partial_U P=\sum_{P=P_1UP_2} P_1 U\otimes P_2-\sum_{P=P_1U^*P_2} P_1 \otimes U^* P_2\,,$$
and $\mathcal D_U =m\circ \partial_U$ with $m(P\otimes Q)=QP$. Similar statements hold for $V$. 
We denote the normalized trace $\tau_n$ as 
$$\tau_n(P)=\frac{1}{n} \Tr( P(A_n,B_n,U,V,U^*,V^*)).$$
Then $\tau_n$ is tight almost surely, thanks to the uniform boundedness  of $(A_n,B_n,U,V)$. Hence, any limit point $\tau^*$ of $\tau_n$  satisfies
\begin{align}
\label{e:loopeq}\tau^*\otimes \tau^*(\partial_U P)={\color{blue}{-}}\tau^*(P\mathcal D_U W)\,.
\end{align}
To get the rectangular spherical integral \eqref{e:lawUb}, we take  $$W= A^*UBV^*+VB^{*}U^* A,$$ in \eqref{e:wmeasure},   so that
$$\mathcal D_U W=\left(BV^*A^*U-U^*AVB^{*}\right)\,.$$
We take non-commutative polynomial $P$ in the form $P=U^* Q(A,A^*, UBV^*,VB^*U^*)U$. Then, we get 
$$\partial_U P=- 1\otimes P+P\otimes 1+\sum_{Q=Q_1 UBV^* Q_2} U^* Q_1 U \otimes BV^*Q_2 U- \sum_{Q=Q_1 VB^*U^* Q_2} U^* Q_1 VB^* \otimes U^*Q_2 U.$$
Hence, since $U$ is unitary and $\tau_n$ tracial
$$ \tau_n\otimes  \tau_n(\partial_U P)= \sum_{Q=Q_1 UBV^* Q_2} \tau_n( Q_1) \tau_n(UBV^*Q_2)- \sum_{Q=Q_1 VB^*U^* Q_2} \tau_n( Q_1 VB^*U^*)  \tau_n(Q_2 ),$$
whereas
$$\tau_n( P\mathcal D_U W)=\tau_n(U^* QU (BV^*A^*U-U^*AVB^*))=\tau_n(Q(UBV^*A^*-AVB^*U^*))\,.$$
We conclude that $\tau_n$  satisfies the loop equation such that for any polynomial $Q$ in $ \bC\langle A,B,U,V\rangle
$
\begin{align}
\begin{split}\label{e:prelimit}
&\phantom{{}={}}\sum_{Q=Q_1 UBV^* Q_2} \tau_n( Q_1)  \tau_n(UB_nV^*Q_2)- \sum_{Q=Q_1 VB^*U^* Q_2} \tau_n( Q_1 VB^*U^*)  \tau_n(Q_2 )\\
&+\tau_n(Q(UBV^*A^*-AVB^*U^*))=\oo_n(1),
\end{split}
\end{align}
with overwhelming probability.
We denote the the limit of $UB_nV^*$ as $b$,  the limit of $A_n$ as $a$, and for any non-commutative polynomial $p(a,a^*,b,b^*)$,  $\partial_{b^*} p(a,a^{*},b,b^*)= \sum_{p=P_1b^{*}P_2} P_1b^{*} \otimes P_2-\sum_{p=P_1b P_2} P_1 \otimes b P_2$.
Therefore, any limit point $ \tau^*$ of $\tau_n$ satisfies
\begin{equation}
\label{loop}
 \tau^*(p(a b^*-ba^{*}))+\tau^*\otimes \tau^*(\partial_{b^*} p)=0.
\end{equation}
We can proceed similarly with the unitary matrix $V$ leading to a second  equation: with $ \partial_{b}p=\sum_{p=p_{1}b p_{2}}p_{1 }b\otimes p_{2}-\sum_{p=p_{1}b^*p_{2}}p_{1 }\otimes b^* p_{2}$,
\begin{align}\label{bloop}
\tau^*(p(a^*b-b^{*}a))+ \tau^*\otimes  \tau^*( \partial_b p)=0.
\end{align}
We can lift finally these equations at the level of the hermitised matrices $(\mathsf a,\mathsf b)$ and let $\tau^*$ be a limit point for $\mu_{\mathcal A_n,\mathcal B_n}$ from \eqref{defher}. Then observe that if $P$ is a monomial of $(\mathsf a,\mathsf b)$ with odd degree then $\tau^*(P(\sfa,\sfb))=0$; if $P$ has even total degree 
$$P(\mathsf a,\mathsf b)=\left(\begin{array}{cc}
p(a,a^{*},b,b^{*})&0\cr
0&p(a^{*},a,b^{*},b)\cr
\end{array}\right),$$
where $p$ is obtained by replacing in $P$ every even letter by its adjoint. We can then define $\partial_{\mathsf b}$ by putting
$$\partial_{\mathsf b} P=\sum_{P=P_1\mathsf b P_2} ( P_1\sfb\otimes    P_2 - P_1   \otimes \mathsf bP_2).$$
The two loop equations \eqref{loop} and \eqref{bloop} for $ \tau^*$ then summarize into an equation for any limit point $\tau^*$ of $\mu_{\cA_n,\cB_n}$ which reads 
\begin{equation}\label{lop}
\tau^*\otimes \tau^*(\partial_{\mathsf b} P)+\tau^* (P (\mathsf a\mathsf b-\mathsf b\mathsf a))=0.
\end{equation}

We can then proceed as in \cite[Theorem 2.11]{BGHldp} to see that the loop equations \eqref{lop} allows us to commute $\sfa$ and $\sfb$. We can use the loop equations to express the trace of any polynomial in terms of the trace of monomials in the form $\sfa^k \sfb^\ell$. In particular by applying the loop equations 
to \eqref{e:moment}, $\tau^*(\sfa^{k'} \sfb^{k-k'})$ are uniquely determined from the moments . Then the trace of any polynomials are uniquely determined. This gives the uniqueness of $\tau^*$ and completes the proof. 

\end{proof}

\bibliography{references.bib}{}
\bibliographystyle{plain}

\end{document}